\theoremstyle{plain}
\newtheorem{theorem}{Theorem}[section]
\newtheorem*{theorem*}{Theorem}
\newtheorem{proposition}[theorem]{Proposition}
\newtheorem{lemma}[theorem]{Lemma}
\newtheorem{corollary}[theorem]{Corollary}
\theoremstyle{definition}
\newtheorem{definition}[theorem]{Definition}
\theoremstyle{remark}
\newtheorem{remark}[theorem]{Remark}
\renewcommand\epsilon{\varepsilon}
\renewcommand\phi{\varphi}
\newcommand\RR{\mathbb{R}}
\newcommand\CC{\mathbb{C}}
\newcommand\ZZ{\mathbb{Z}}
\newcommand\QQ{\mathbb{Q}}
\newcommand\kk{\mathbf{k}}
\renewcommand\Im{\IIm}
\newcommand\Art{\mathbf{Art}}
\newcommand\Ohat{\widehat{\mathcal{O}}}
\newcommand\TW{\mathrm{TW}}
\DeclareMathOperator\id{id}
\DeclareMathOperator\Ker{Ker}
\DeclareMathOperator\IIm{Im}
\DeclareMathOperator*\End{End}
\DeclareMathOperator*\Hom{Hom}
\DeclareMathOperator\Ad{Ad}
\DeclareMathOperator\ad{ad}
\DeclareMathOperator\Gr{Gr}
\DeclareMathOperator\Def{Def}
\DeclareMathOperator\MHC{MHC}
\DeclareMathOperator\DR{DR}
\DeclareMathOperator\Res{Res}
\DeclareMathOperator\can{can}
\DeclareMathOperator\var{var}
\DeclareMathOperator\MF{MF}
\DeclareMathOperator\MFW{MFW}
\DeclareMathOperator\MH{MH}
\DeclareMathOperator\MHM{MHM}
\numberwithin{equation}{section}
\title[Deformations of representations of fundamental groups]{Deformations of representations of fundamental groups of complex varieties}
\subjclass{14D07; 14C30; 18D50}
\keywords{Representation Varieties, Fundamental Groups, Deformation Theory, Variations of Hodge Structure, Mixed Hodge Modules, $L_\infty$-Algebras}
\author[L.-C. Lefèvre]{Louis-Clément Lefèvre}
\email{Louis-Clement.Lefevre@math.cnrs.fr}
\address{Universität Duisburg-Essen, Fakultät Für Mathematik, 45117 Essen, Germany}
\curraddr{Lycée Hoche, 73 avenue de Saint-Cloud, 78000 Versailles, France}
\begin{document}

\begin{abstract}
We describe locally the representation varieties of fundamental groups for smooth complex varieties at representations coming from the monodromy of a variation of mixed Hodge structure. Given such a manifold $X$ and such a linear representation $\rho$ of its fundamental group $\pi_1(X,x)$, we use the theory of Goldman-Millson and pursue our previous work that combines mixed Hodge theory with derived deformation theory to construct a mixed Hodge structure on the formal local ring $\widehat{\mathcal{O}}_\rho$ to the representation variety of $\pi_1(X,x)$ at $\rho$. Then we show how a weighted-homogeneous presentation of $\widehat{\mathcal{O}}_\rho$ is induced directly from a splitting of the weight filtration of its mixed Hodge structure. In this way we recover and generalize theorems of Eyssidieux-Simpson ($X$ compact) and of Kapovich-Millson ($\rho$ finite).
\end{abstract}


\maketitle

\tableofcontents

\section{Introduction}

\subsection{Topology of algebraic varieties}
Our object of study will be either smooth complex algebraic varieties, that we can study by embedding them into a smooth proper algebraic variety $\overline{X}$, or complex manifolds admitting a compactification into a compact Kähler manifold $\overline{X}$ (these are called \emph{quasi-Kähler}, in analogy with quasi-projective varieties). In both cases the cohomology groups of $\overline{X}$ carry pure Hodge structures; the complement of $X\subset\overline{X}$ can be taken to be a divisor with normal crossings.

Then the cohomology groups of such an $X$ come equipped with \emph{mixed Hodge structures} (MHS), defined and constructed first by Deligne \cite{DeligneII}. In particular they have a \emph{weight filtration} defined over $\QQ$ whose graded quotients behave as cohomology groups of compact Kähler manifolds. Then mixed Hodge structures have been constructed on other topological invariants of such varieties, in particular on the rational homotopy groups by Morgan \cite{Morgan} and Hain \cite{Hain}. They can be used to give concrete restrictions on the possible fundamental groups of these varieties \emph{via} the study of weights.

In previous work \cite{Lefevre2} we constructed MHS's on certain invariants associated to linear representations of the fundamental group $\pi_1(X,x)$. We want to pursue this work and analyze the restrictions that this gives on the representations.

\subsection{Deformations of representations of fundamental groups}
For $X$ as above, $\pi_1(X,x)$ is always a finitely presentable group. Hence the set of representations of $\pi_1(X,x)$ into some fixed linear algebraic group $G$ over $\CC$ has a natural structure of affine scheme that we denote by $\Hom(\pi_1(X,x),G)$. Given such a representation $\rho$, our main object of study is the formal completion of the local ring to $\Hom(\pi_1(X,x),G)$ at its point $\rho$. We denote it by $\Ohat_\rho$. Its associated deformation functor, a functor from the category $\Art$ of local Artin $\CC$-algebras with residue field $\CC$ to the category of sets, is given by
\begin{equation}
A\in \Art \longmapsto \Def_{\Ohat_\rho}(A) := \Hom(\Ohat_\rho, A)
\end{equation}
and is canonically isomorphic to the functor of formal deformations of $\rho$
\begin{equation}
\Def_\rho(A):= \bigl\{ \tilde{\rho}:\pi_1(X,x)\rightarrow G(A) \bigm|
\tilde{\rho}=\rho\ \text{over}\ G(\CC) \bigr\} .
\end{equation}
Thus, if one can give a concrete presentation of $\Ohat_\rho$ as
\begin{equation}
\label{equation:presentation-Orho}
\Ohat_\rho \simeq \CC[[X_1,\dots,X_n]]/(P_1,\dots,P_r)
\end{equation}
where $P_1,\dots, P_r$ are polynomials then one can describe completely the deformation theory of $\rho$: deformations over $A$ are given by elements $x_1,\dots,x_n$ of $A$ satisfying the relations $P_1(x_1,\dots,x_n)=\dots=P_r(x_1,\dots,x_n)=0$. In particular, if an integer $N$ is bigger than the maximal of the degrees of $P_1,\dots,P_r$ then there are no obstructions to lifting deformations of order $N$ to deformations of order $N+1$.

This was described first by Goldman and Millson in \cite{GoldmanMillson} in the case where $X$ is a compact Kähler manifold and $\rho$ is the monodromy of a polarizable variation of Hodge structure over $X$: they show that $\Ohat_\rho$ has a quadratic presentation, that is, a presentation with $P_1,\dots,P_r$ of degree at most $2$.

A presentation of $\Ohat_\rho$ was also obtained by Kapovich-Millson \cite{KapovichMillson} when $X$ may be non-compact and $\rho$ is a representation with finite image: they show that there is a presentation with weights $1,2$ on the variables $X_1,\dots,X_n$ and polynomials $P_1,\dots,P_r$ that are homogeneous of possible degrees $2,3,4$ with respects to these weights on the variables.

These two results have concrete applications to exhibit classes of groups that cannot be fundamental groups of smooth varieties. Our motivation is to generalize these results and unify them, and make more explicit the use of mixed Hodge structures.

\begin{theorem}
Let $\rho$ be the monodromy representation of an admissible variation of mixed Hodge structure over a smooth complex algebraic variety or a quasi-Kähler manifold $X$ with unipotent monodromy at infinity. Then there is a mixed Hodge structure on $\Ohat_\rho$, functorial in $X$ and the base point $x$. Furthermore, one obtains a weighted-homogeneous presentation of $\Ohat_\rho$ by splitting the weight filtration.
\end{theorem}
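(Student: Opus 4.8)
The plan is to realize $\Def_\rho$ as the deformation functor of an explicit $L_\infty$ algebra that carries a mixed Hodge structure, and then to read the weighted-homogeneous presentation off a splitting of the weight filtration. First I would recall, following Goldman-Millson and derived deformation theory, that the formal deformations of $\rho$ are governed by the differential graded Lie algebra $A^\bullet(X,\mathfrak{g}_\rho)$ of smooth forms on $X$ with values in the adjoint local system $\mathfrak{g}_\rho=\Ad\rho$: one has $\Def_\rho(A)\cong\MC\big(A^\bullet(X,\mathfrak{g}_\rho)\otimes\mathfrak{m}_A\big)$ modulo gauge. Replacing this DGLA by a minimal $L_\infty$ model $L$ on its cohomology $H^\bullet(X,\mathfrak{g}_\rho)$ and applying the completed Chevalley-Eilenberg construction yields the pro-representing hull, namely $\Ohat_\rho$. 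Its cotangent space is then $(H^1)^\vee$ and its obstructions lie in $H^2$, so a presentation as in \eqref{equation:presentation-Orho} is produced by the dual Taylor coefficients of the $L_\infty$ operations $\ell_n\colon (H^1)^{\otimes n}\to H^2$.

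The heart of the matter is to lift this entire package into the category of mixed Hodge structures. Since $\rho$ is the monodromy of a variation of mixed Hodge structure that is admissible in $\overline{X}$ and unipotent at infinity, the adjoint local system $\mathfrak{g}_\rho$ again underlies an admissible variation of mixed Hodge structure, hence a mixed Hodge module on $\overline{X}$; consequently $H^\bullet(X,\mathfrak{g}_\rho)$ carries a functorial mixed Hodge structure. As in \cite{Lefevre2} I would apply the Thom-Whitney functor $\RTW$ to the logarithmic de Rham complex of the canonical extension of $\mathfrak{g}_\rho$ so as to obtain a DGLA that computes this mixed Hodge structure multiplicatively, and then transfer along a Hodge-compatible contraction so that the minimal model $L$ becomes an $L_\infty$ algebra \emph{internal to} mixed Hodge structures, with every bracket $\ell_n$ a morphism of mixed Hodge structures. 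Functoriality of the completed Chevalley-Eilenberg construction then transports the mixed Hodge structure to $\Ohat_\rho$; functoriality in $X$ and $\rho$ is inherited from the mixed Hodge module formalism, while the dependence on the base point $x$ enters through the augmentation given by the fiber of $\mathfrak{g}_\rho$ at $x$.

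For the presentation, every mixed Hodge structure admits a splitting of its weight filtration $W$, so I would fix such a splitting and decompose the coordinates $X_1,\dots,X_n\in (H^1)^\vee$ and the relations into weight-homogeneous pieces. Because each $\ell_n$ is a morphism of mixed Hodge structures it is strict and weight-preserving; hence in the chosen splitting a monomial coming from $\ell_n$ can contribute to $P_j$ only when the total weight of its variables matches the weight of the corresponding component of $(H^2)^\vee$. This is exactly the assertion that $P_1,\dots,P_r$ are weighted-homogeneous for the weights induced on $X_1,\dots,X_n$ by the mixed Hodge structure on $(H^1)^\vee$; it specializes to the quadratic presentation of Goldman-Millson when the Hodge structure on $H^1$ is pure, and to the weights $1,2$ with relation-degrees $2,3,4$ of Kapovich-Millson in the unipotent case.

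The step I expect to be the main obstacle is the second one: producing the mixed Hodge structure not merely on the groups $H^\bullet(X,\mathfrak{g}_\rho)$ but compatibly with the \emph{entire} $L_\infty$ structure, i.e.\ with all higher operations $\ell_n$ simultaneously, and then controlling the completion so that the transported structure is a genuine mixed Hodge structure on the pro-Artinian ring $\Ohat_\rho$. This hinges on strictness of the relevant filtered quasi-isomorphisms and on a careful use of the Thom-Whitney/Navarro-Aznar formalism to keep the Lie and the multiplicative structures compatible with both the Hodge and the weight filtrations throughout the homotopy transfer.
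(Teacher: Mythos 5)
Your overall architecture (Goldman--Millson, Thom--Whitney mixed Hodge complexes, homotopy transfer, splitting of $W$) matches the paper's, but there are two genuine gaps. The first is at the very start: $\MC\big(A^\bullet(X,\mathfrak{g}_\rho)\otimes\mathfrak{m}_A\big)$ modulo gauge is \emph{not} $\Def_\rho$; it classifies deformations of $\rho$ up to conjugation, whereas $\Ohat_\rho$ is the formal local ring of the representation scheme $\Hom(\pi_1(X,x),G)$ itself. The paper uses the \emph{augmented} deformation functor $\Def_{L,\epsilon_x}$ of Eyssidieux--Simpson, realized as the deformation functor of the Fiorenza--Manetti $L_\infty$ cone $C$ of the evaluation $\epsilon_x:L\to\mathfrak{g}$, and a nontrivial computation there shows $\Def_{H(C)}=\Def_{H(L)}\times\big(\mathfrak{g}/\epsilon(H^0(L))\big)$: the transferred higher operations on $H^1(C)$ vanish on the new directions. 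Your hull therefore misses the free factor $\mathfrak{g}/\epsilon(H^0(L))$ (for $\rho$ irreducible this is essentially all of $\mathfrak{g}$, the conjugation directions), so the ring you present is the germ of the character variety, not $\Ohat_\rho$; the base point does not merely ``enter through the augmentation'' decoratively, it changes the functor being pro-represented.

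The second gap is the step you yourself flag: a ``Hodge-compatible contraction'' making every $\ell_n$ a morphism of mixed Hodge structures is not available --- the paper explicitly remarks (after Theorem~\ref{theorem:weight-grading}) that such a homotopy transfer theorem for mixed Hodge diagrams is work in progress, and it deliberately avoids it. What the paper proves is weaker and suffices: by Cirici--Horel $E_1$-formality of mixed Hodge complexes, $(L,W)$ is quasi-isomorphic over $\kk$ to $E_1^W(L)$, which carries a weight \emph{grading} at the chain level; transfer compatible with this extra grading (as in Budur--Rubi\'o) yields operations $\ell_r$ that respect the weight grading, which is exactly what the weighted-homogeneous presentation needs --- no strictness of $\ell_r$ as MHS morphisms is ever invoked. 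Relatedly, you assert that the Thom--Whitney functor applied to the logarithmic de Rham complex of the canonical extension computes the MHS ``multiplicatively''; in the general admissible case this multiplicativity of Saito's weight filtration $\mathfrak{W}$ is the paper's main geometric work (the computations with $Z_k=N(W_k)+M_{k-1}\cap W_{k-1}$ via Kashiwara's reformulation, in Proposition~\ref{proposition:SteenbrinkZucker-multiplicative} and its mixed Hodge module analogue), and it is only established for the complex part, the rational part being dropped because there is no tensor product of perverse sheaves or mixed Hodge modules; consequently the resulting MHS on $\Ohat_\rho$ in the general case is only defined over $\CC$, a restriction your proposal does not register. Finally, note that even granting MHS-morphism operations, weight-homogeneity in a chosen splitting requires the splitting to be functorial and compatible with tensor products (e.g.\ Deligne's splitting over $\CC$), not an arbitrary one.
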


In our previous work, we constructed this MHS on $\Ohat_\rho$ only in the case treated by Goldman-Millson (recovering a result of Eyssidieux-Simpson \cite{EyssidieuxSimpson}) and in the case of finite monodromy treated by Kapovich-Millson. This already provided some unification of these two theories that was suitable for further generalizations, enlightening the role of mixed Hodge theory, but we were not able to fully recover the weighted-homogeneous presentation of Kapovich-Millson. It will now appear clearly that the weights $1,2$ on generators come from the MHS of $H^1(X,\QQ)$ and the weights $2,3,4$ on the relations come from the MHS of $H^2(X,\QQ)$.

\subsection{Deformations via DG Lie algebras}
Let $\rho$ be a representation of $\pi_1(X,x)$ into a linear algebraic group $G$ with Lie algebra $\mathfrak{g}$, as above, at a fixed base point $x$. To this is associated a flat principal bundle $P$ with associated bundle $\ad_\rho:=\ad(P)=P\times^G \mathfrak{g}$: this is a local system of Lie algebras over $X$. Let $L$ be the algebra of $\mathcal{C}^\infty$ differential forms over $X$ with values in $\ad_\rho$. This has a natural structure of differential graded (DG) Lie algebra, with an augmentation $\epsilon$ to $\mathfrak{g}$ that evaluates forms of degree zero at $x$.

The theory introduced by Goldman-Millson states that it is possible to associate to $L$ a deformation functor $\Def_L$, and actually to $(L,\epsilon)$ a small modification $\Def_{L,\epsilon}$, again from $\Art$ to sets, such that $\Def_{L,\epsilon}$ is isomorphic to $\Def_\rho$ and such that a quasi-isomorphism of DG Lie algebras $L\rightarrow M$ induces an isomorphism of deformation functors $\Def_{L,\epsilon} \rightarrow \Def_{M,\epsilon}$. So the local ring $\Ohat_\rho$ is determined by the data of $L$ up to quasi-isomorphism, and its obstruction theory (i.e.\ the presentation~\eqref{equation:presentation-Orho}) can be understood in terms of $L$ only (and of $\epsilon$).

The conclusion in the compact cases that they treat follows quite easily once proven that, using the particular properties of harmonic analysis on Kähler manifolds, $L$ is \emph{formal}, that is, quasi-isomorphic as DG Lie algebra to its cohomology algebra $H(L)$, which is finite-dimensional and with zero differential. Then $\Ohat_\rho$ is essentially isomorphic to the formal local ring at $0$ to the equation $[\omega,\omega]=0$ for $\omega$ in $H^1(L)$ and with values in $H^2(L)$, and this is a quadratic presentation.

In order to pursue this method to the non-formal case, we argued in our previous work  \cite[\S~4]{Lefevre2} that the right tool to use is $L_\infty$ algebras. Namely on $H(L)$ there exists a sequence of multilinear \emph{higher operations} (for each $n\geq 1$, a multilinear operation in $n$ variables on $H(L)$, which for $n=2$ is the Lie bracket, for $n=1$ the differential which is zero here) arranged in a certain algebraic and combinatorial structure called \emph{$L_\infty$ algebra} such that $L$ and $H(L)$ become quasi-isomorphic \emph{as $L_\infty$ algebras} and such that $\Def_L$ can be written in terms of $H(L)$ with its $L_\infty$ structure only. So, we can again understand $\Def_L$ via linear algebra in finite-dimensional vector spaces but at the cost of working with more operations. A similar argumentation is central in a recent work of Budur-Rubi\'o \cite{BudurRubio}.

\subsection{Hodge theory}
\label{subsection:Hodge}
In several special cases, the cohomology of $X$ with coefficients in a local system $\mathbb{V}$ also carries a MHS. If $\rho$ is the monodromy representation of $\mathbb{V}$ then the associated DG Lie algebra $L$ as above also carries a MHS on cohomology and the Lie bracket is a morphism of MHS. What we want to exploit is precisely the interaction between the MHS on $H^\bullet(L)$ and the deformation functor of $L$.

A very interesting class of such local systems is provided by the \emph{variations of (mixed) Hodge structure} (VHS, VMHS) (\S~\ref{section:VMHS}). These arise naturally from geometry as follows. Given a morphism $f : Y \rightarrow X$ of algebraic varieties, then each fiber $f^{-1}(x) \subset Y$ is an algebraic variety and carries a MHS on its cohomology. If $f$ is smooth and proper, the assignment
\begin{equation*}
x\in X\longmapsto  H^i(f^{-1}(x), \mathbb{Q}) = (R^i f_* \underline{\mathbb{Q}}_Y)_x
\end{equation*}
forms the underlying local system of $\QQ$-vector spaces of a VHS ; it is more generally a VMHS if $f$ is a stratified fiber bundle. The fact that for a VMHS $\mathbb{V}$ then $H^\bullet(X, \mathbb{V})$ also carries a MHS was conjectured by Deligne and proved first in the case where $X$ is compact. This then motivated a long development of the theory of VHS's and VMHS's and their behavior around a normal crossing divisor at infinity $D := \overline{X} \setminus X$. This MHS was constructed in the following cases:

\begin{enumerate}
\item When $X$ is compact Kähler, due to Deligne and written by Zucker \cite[\S~1--2]{Zucker} for the case of VHS.
\item When $\mathbb{V}$ is a VHS over $X$ that extends to $\overline{X}$, this is easily deduced.
After a theorem of Griffiths, this is the same as requiring that $\mathbb{V}$ as a local system extends to $\overline{X}$, equivalently that its monodromy is trivial around $D$.
\item When $X$ is one-dimensional and $\mathbb{V}$ is a VHS, due to Zucker \cite[\S~13]{Zucker}.
\item When $X$ is one-dimensional and $\mathbb{V}$ is a VMHS, due to Steenbrink-Zucker \cite{SteenbrinkZucker}. The VMHS has to satisfy the \emph{admissibility condition}, that they introduce, relatively to $\overline{X}$; in particular the monodromy at infinity has to be \emph{quasi-unipotent}.
\item In the most general case, when $\mathbb{V}$ is a VMHS on $X$ admissible relatively to $\overline{X}$, due to Saito \cite{SaitoMHM} using his theory of \emph{mixed Hodge modules}.
\end{enumerate}

The MHS on the cohomology of $L$ actually always comes from a pre-existing structure at the level of $L$ called \emph{mixed Hodge complex}. This is the notion introduced by Deligne in \cite{DeligneIII}, consisting of two complexes: $L_\QQ$ over $\QQ$ carrying the weight filtration, and $L_\CC$ over $\CC$ carrying the Hodge and weight filtrations, with a quasi-isomorphism $L_\QQ\otimes\CC \approx L_\CC$ and axioms ensuring that the weight-graded pieces of cohomology glue to pure Hodge structures, defining a MHS on each  $H^\bullet(L)$.

In our previous work \cite[\S~7--9]{Lefevre2} we explained that the only input of geometry that we need in order to construct a MHS on $\Ohat_\rho$ is to have such an $L$ which is at the same time a DG Lie algebra and a mixed Hodge complex. This is not so obvious because the naive way of constructing $L_\QQ$, via sheaf theory or via singular cochains, does not provide a DG Lie algebra: the cup-product is not commutative at the level of cochains. The global picture is quite similar to the work of Morgan where in order to construct the MHS on rational homotopy groups, it is necessary to have an object $A$ which is at the same time a mixed Hodge complex (thus containing all the information on the MHS on $H^\bullet(X)$) and a commutative DG algebra (by rational homotopy theory, this up to quasi-isomorphism contains all the information about the rational homotopy type of $X$). So he constructs $A_\QQ$ which is an algebra of rational polynomial differential forms on~$X$ with logarithmic poles along~$D$. Similarly, in our work $L_\QQ$ is an algebra of rational polynomial differential forms with coefficients in a rational local system. Fortunately, the Thom-Whitney functors, introduced by Navarro Aznar \cite{Navarro} when reviewing this work of Morgan, allow us to give a straightforward and functorial construction of this~$L_\QQ$.

The data of such a structure on $L$ is usually not unique (it depends on the chosen compactification $\overline{X}$) but it is unique up to quasi-isomorphism. The construction of our MHS on $\Ohat_\rho$ from $L$ is directly seen to be functorial and invariant under quasi-isomorphisms (of simultaneous structures of DG Lie algebra and mixed Hodge complexes) of $L$. So, combining, the MHS on $\Ohat_\rho$ is functorial in $X$ and the base point~$x$:
a morphism of varieties with given base points and compactifications
\begin{equation*}
f : (X, \overline{X}, x) \longrightarrow (Y, \overline{Y}, y)
\end{equation*}
induces a pull-back morphism of representations
\begin{equation*}
f^* : \Hom(\pi_1(Y,y), G) \longrightarrow \Hom(\pi_1(X,x), G)
\end{equation*}
which also pulls back VMHS's and pulls back our whole construction of $L$, and then a morphism of local rings
\begin{equation*}
\Ohat_\rho \rightarrow \Ohat_{f^*\rho}
\end{equation*}
and this is a morphism of MHS. Usual arguments imply that the MHS is actually independent of the compactification.

\subsection{Results}
\label{subsection:results}
Our method divides the construction of a MHS on $\Ohat_\rho$ into two well-distinct parts. In the first part, we construct $L$ which is at the same time a DG Lie algebra and a mixed Hodge complex. This strongly depends on the ``geometric situation'' that we fix (the given hypotheses for $X$ and $\rho$), hence through \S~\ref{section:non-degenerate}--\ref{section:MHM} we will devote one section to each of the cases listed in \S~\ref{subsection:Hodge}. The second part is purely algebraic and refers to $L$ only and not any more to geometry. We treat this only in \S~\ref{section:applications}.

For the first part, our result can be summed up as follows.

\begin{theorem}
Let $\rho$ be the monodromy representation of an admissible (graded-polarizable) variation of mixed Hodge structure over $X\subset \overline{X}$, with unipotent monodromy at infinity. Then one can construct an object $L$ which is at the same time a DG Lie algebra and a mixed Hodge complex which computes the cohomology of $X$ with coefficients in the local system $\ad_\rho$. It is functorial in $(X,\overline{X},x)$.
\end{theorem}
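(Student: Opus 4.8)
The plan is to build $L$ from the adjoint variation and to separate, as the strategy outlined above suggests, its complex realization $L_\CC$ (carrying the Hodge and weight filtrations) from its rational realization $L_\QQ$ (carrying the weight filtration), and then to glue them by a filtered comparison quasi-isomorphism. The conceptual starting point, which explains why a single object can carry both structures at once, is that $\ad(\rho)$ is not merely a local system of Lie algebras: it underlies the adjoint variation $\ad(\mathbb{V})$, which is again a graded-polarizable variation of mixed Hodge structure, admissible in $\overline{X}$ and unipotent at infinity, since these properties are stable under the tensor and endomorphism operations that produce the adjoint. Crucially, the Lie bracket $\ad(\mathbb{V})\otimes\ad(\mathbb{V})\to\ad(\mathbb{V})$ is a morphism of variations of mixed Hodge structure, hence strictly compatible with both $W_\bullet$ and $F^\bullet$. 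This is the feature that will let the wedge-and-bracket operation descend to a morphism of mixed Hodge complexes.

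First I would construct the complex part $L_\CC$. Since we are in the most general admissible case, the correct weight filtration is the relative monodromy weight filtration along the components of $D=\overline{X}\setminus X$, and the clean source for it is Saito's theory of mixed Hodge modules \cite{SaitoMHM}: the variation $\ad(\mathbb{V})$ determines a mixed Hodge module on $\overline{X}$ whose de Rham complex computes $H^\bullet(X,\ad(\rho)_\CC)$ together with $F$ and $W$. Because the monodromy is unipotent, this filtered complex can be realized concretely through the logarithmic de Rham complex $\Omega^\bullet_{\overline{X}}(\log D)\otimes\overline{\ad(\mathcal{V})}$ of Deligne's canonical extension, whose wedge-and-bracket operation is a strict graded Lie bracket; a Thom-Whitney resolution then yields the global differential graded Lie algebra $L_\CC$. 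Threading the bracket through the Hodge and weight filtrations and checking that it is filtered is exactly where the morphism-of-variations property of the previous paragraph is used.

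Next I would construct the rational part $L_\QQ$. The naive resolution of $R\Gamma(X,\ad(\rho)_\QQ)$ carries the weight filtration but, as noted, its cup-product is only homotopy-commutative and so does not yield a strict Lie algebra. Here I would apply the Thom-Whitney functor $\RTW$ of Navarro Aznar \cite{Navarro} to a cosimplicial model of rational polynomial differential forms with coefficients in $\ad(\rho)_\QQ$: this rigidifies the product and produces a genuine differential graded Lie algebra $L_\QQ$, quasi-isomorphic to $R\Gamma(X,\ad(\rho)_\QQ)$ and carrying the weight filtration dictated by admissibility. The comparison $L_\QQ\otimes\CC\approx L_\CC$ would then be assembled from the local comparison $\ad(\rho)_\QQ\otimes\CC\cong\ad(\rho)_\CC$: the functoriality of $\RTW$ propagates this isomorphism of coefficient systems to a filtered quasi-isomorphism of the two global differential graded Lie algebras, producing the datum of a mixed Hodge complex in the sense of \cite{DeligneIII}.

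It remains to verify the axioms of a mixed Hodge complex, namely that the $W$-graded pieces of $H^\bullet(L)$ are pure Hodge structures, polarizable in the appropriate weight; this is precisely what Saito's construction guarantees (and where graded-polarizability enters), so the pieces assembled above glue to a genuine mixed Hodge structure on each $H^\bullet(X,\ad(\rho))$. Functoriality in $X$, $x$ and $\rho$ follows because every ingredient---the passage $\rho\mapsto\ad(\rho)$, Saito's functors, and the Thom-Whitney construction---is itself functorial, and up to quasi-isomorphism the dependence on the auxiliary choices (the compactification $\overline{X}$ and the cover) disappears. The hard part will be the compatibility of the two weight filtrations along $D$: one must check that the weight filtration produced by $\RTW$ on the rational side matches the relative monodromy weight filtration coming from Saito's mixed Hodge module on the complex side, and that the strict Lie bracket is filtered for it. This is the genuinely geometric step, and it is the reason the construction is carried out case by case---from Steenbrink-Zucker in dimension one \cite{SteenbrinkZucker} up to Saito in general---rather than in one stroke.
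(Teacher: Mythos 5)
Your overall architecture (pass to the adjoint VMHS $\ad(\mathbb{V})$, realize the complex part via Saito and the logarithmic de Rham complex of the canonical extension, use Thom--Whitney functors to rigidify products) matches the paper's, but there is a genuine gap at the rational part, and it is exactly the point where the paper deliberately retreats. You assert that one can build a rational DG Lie algebra $L_\QQ$ carrying ``the weight filtration dictated by admissibility'' and that functoriality of the Thom--Whitney functor then propagates the coefficient comparison to a filtered quasi-isomorphism $L_\QQ\otimes\CC\approx L_\CC$. In the general admissible case this step is not available: the weight filtration $\mathfrak{W}$ near $D$ is not induced by the filtration $\mathbb{W}_\bullet$ of the local system (for which your argument would work), but is built at the boundary from $Z_k=N(W_k)+M_{k-1}\cap W_{k-1}$, iterated over the components of $D$ via the functors $\Psi_I$, and the multiplicative structure needed to induce the Lie bracket does not exist on the rational side --- there is no tensor product of perverse sheaves or of mixed Hodge modules at the level of objects. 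This is why the paper, in the mixed Hodge module case, \emph{drops} the rational component entirely and proves the lax symmetric monoidal property only for $(\Omega^\bullet_{\overline{X}}(\log D,\overline{\mathcal{V}}),\mathfrak{W},F)$, yielding a $\CC$-mixed Hodge complex (in the sense of Eyssidieux--Simpson, with $W,F,\overline{F}$) and consequently a MHS on $\Ohat_\rho$ defined only over $\CC$ in general; the full rational structure is obtained only in the special cases (compact base, VHS extendable to $\overline{X}$, curves), where the Steenbrink--Zucker construction of $\mathfrak{W}$ on $R_\TW j_*\mathbb{V}$ by truncation and intersection with $R_\TW j_*(\mathcal{Z}_k)$ can be written down explicitly.

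Relatedly, your claim that the bracket being a morphism of VMHS is ``exactly where'' the filtered compatibility is used understates the real crux, which your last paragraph correctly flags but leaves without a mechanism. The paper's proof of multiplicativity of $\mathfrak{W}$ is a nontrivial linear-algebra verification at the boundary: one needs $W^1_k\otimes Z_i(W^2)_\ell\subset Z_i(W)_{k+\ell}$, which requires the Steenbrink--Zucker appendix (the relative monodromy filtration of a tensor product is $M^1\otimes M^2$ for $N=N_1\otimes\id+\id\otimes N_2$) together with Kashiwara's reformulation
\begin{equation*}
Z_\ell = N(W_\ell)+\bigcap_{j\geq 0}(N^j)^{-1}(W_{\ell-j}),
\end{equation*}
plus, in higher dimension, the observation that under tensor product $\Psi_I\mathcal{M}_1$ and $\Psi_J\mathcal{M}_2$ pair into $\Psi_{I\cup J}\mathcal{M}$ only when $I\cap J=\emptyset$. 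Without this computation the bracket is not known to be filtered for $\mathfrak{W}$, and without renouncing the rational part (or supplying a genuinely new commutative theory of rational structures on mixed Hodge modules, which the paper states it does not know how to do) your comparison step cannot be completed as written.
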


By the main result of \cite{Lefevre2} this is enough to construct the MHS on $\Ohat_\rho$, functorial in $X,x$ only.

To put our work in a slightly more general context that costs us nothing more,
we actually construct a mixed Hodge complex $\MHC(X,\overline{X},\mathbb{V})$ attached to the admissible VMHS $\mathbb{V}$ with unipotent monodromy at infinity, functorial in $(X,\overline{X})$ and in $\mathbb{V}$.
The tensor product of two such VMHS $\mathbb{V}_1,\mathbb{V}_2$ is again a VMHS and our MHC will be equipped with canonical maps
\begin{equation*}
\MHC(X,\overline{X},\mathbb{V}_1) \otimes \MHC(X,\overline{X},\mathbb{V}_2)
\longrightarrow \MHC(X,\overline{X},\mathbb{V}_1 \otimes \mathbb{V}_2)
\end{equation*}
(together with an identity and associativity condition)
that are compatible with the interchange map of $\mathbb{V}_1$ and $\mathbb{V}_2$. In particular if $\mathbb{V}$ is a VMHS then so is $\ad_\rho$ (if $G\subset GL(\mathbb{V}_x)$ then $\ad_\rho\subset \End(\mathbb{V})$) equipped with the Lie bracket $[-,-]$ and then there is an induced Lie bracket on $\MHC(X,\overline{X},\ad_\rho)$ via the composition
\begin{equation*}
\MHC(X,\overline{X}, \ad_\rho) \otimes \MHC(X,\overline{X},\ad_\rho) \\
\longrightarrow \MHC(X,\overline{X},\ad_\rho \otimes \ad_\rho)
\xrightarrow{[-,-]} \MHC(X,\overline{X},\ad_\rho) .
\end{equation*}
Typically $\MHC(X,\overline{X},\mathbb{V})$ is called a \emph{lax symmetric monoidal functor} (\S~\ref{section:lax-monoidal}) in the data of $\mathbb{V}$, and by this same argument inherits every kind of algebraic structure that can be described on $\mathbb{V}$ by multilinear maps that eventually exchange the factors. This generalization matters because in future work \cite{Lefevre4} we will study the structure of Lie module of $\mathbb{V}$ over $\ad_\rho$, also in the sense of VMHS, and we will immediately get also a mixed Hodge complex $\MHC(X,\overline{X},\mathbb{V})$ that is a module over $\MHC(X,\overline{X},\ad_\rho)$ with action map
\begin{equation*}
\MHC(X,\overline{X}, \ad_\rho) \otimes \MHC(X,\overline{X},\mathbb{V}) \\
\longrightarrow \MHC(X,\overline{X},{\ad_\rho} \otimes {\mathbb{V}})
\longrightarrow \MHC(X,\overline{X},\mathbb{V}) .
\end{equation*}

Now, we study the interaction between the deformation functor of $L$, that can be written in $H(L)$ with higher operations of $L_\infty$ algebras, and the existing MHS on $H(L)$. What we show in \S~\ref{section:applications} is that there is a splitting of the weight filtration on $H(L)$ that is compatible with the higher operations of $L_\infty$ algebra. Because the higher operations are multilinear and respect the weights, but the weights of $H(L)$ in each degree are limited, this forces many higher operations to vanish and the remaining ones give the equations for the deformation functor. The most general result, in its purely algebraic version, is:

\begin{theorem}
Assume that the MHS on $H^1(L)$ has only strictly positive weights. Then $\Ohat_\rho$ has a weighted-homogeneous presentation as in~\eqref{equation:presentation-Orho} with weights of generators coming from weights of $H^1(L)$ and weights of relations limited to the weights of $H^2(L)$.
\end{theorem}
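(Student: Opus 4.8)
The plan is to transport everything to the transferred $L_\infty$ structure on $H(L)$, where the presentation of $\Ohat_\rho$ is read off directly from the higher operations, and then to run a weight-counting argument whose only non-formal input is a splitting of the weight filtration compatible with \emph{all} of these operations at once.

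First I would recall the $L_\infty$ description of the deformation functor. Since $L$ and $H(L)$ are quasi-isomorphic as $L_\infty$ algebras, the main result of \cite{Lefevre2} identifies $\Def_\rho\cong\Def_L$ with the Maurer-Cartan functor of $H(L)$ equipped with its transferred operations $\ell_n$; as the differential on $H(L)$ vanishes, a deformation over $A\in\Art_\kk$ is a solution $\xi\in H^1(L)\otimes\mathfrak m_A$ of
\[
\sum_{n\ge 2}\frac{1}{n!}\,\ell_n(\xi,\dots,\xi)=0\in H^2(L)\otimes\mathfrak m_A .
\]
Dualizing the associated Kuranishi map $\widehat{\Sym}\,H^1(L)\to H^2(L)$ gives a presentation of the shape \eqref{equation:presentation-Orho} in which the variables $X_i$ are dual to a basis of $H^1(L)$ and the relations $P_j$ are dual to a basis of $H^2(L)$. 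It therefore suffices to grade $H^1(L)$ and $H^2(L)$ so that every $\ell_n$ is weight-homogeneous.

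The key point is that each $\ell_n\colon H^1(L)^{\otimes n}\to H^2(L)$ is a morphism of mixed Hodge structures: because $L$ is at the same time a DG Lie algebra and a mixed Hodge complex, the homotopy transfer can be performed with contractions respecting the Hodge data, so each $\ell_n$ is a composite of the (MHS-compatible) bracket with these contractions. To split $W$ compatibly with the \emph{whole} tower $(\ell_n)_{n\ge 2}$ simultaneously I would invoke Deligne's canonical bigrading: the decomposition by the spaces $I^{p,q}$ is functorial in morphisms of MHS and compatible with tensor products, so the induced weight grading is automatically preserved by every morphism of MHS. Applying it to $H^1(L)$ and $H^2(L)$ makes each $\ell_n$ weight-additive, sending a product of classes of weights $w_1,\dots,w_n$ into weight $w_1+\dots+w_n$. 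Choosing the $X_i$ dual to a weight-homogeneous basis of $H^1(L)$ and the $P_j$ dual to a weight-homogeneous basis of $H^2(L)$, each $P_j$ is then weighted-homogeneous of weight equal to that of the corresponding basis vector of $H^2(L)$.

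Finally comes the weight bookkeeping. By hypothesis all weights on $H^1(L)$ are $\ge 1$, while the weights occurring on $H^2(L)$ are bounded above by some integer $w_{\max}$. A monomial of degree $n$ in the $X_i$ thus has weight $\ge n$, so it can contribute to a relation only if $n\le w_{\max}$: all $\ell_n$ with $n>w_{\max}$ vanish and only finitely many relations, of bounded degree, survive. This yields exactly a weighted-homogeneous presentation with weights of generators coming from $H^1(L)$ and weights of relations limited to those of $H^2(L)$. The main obstacle is precisely the simultaneous compatibility of a single splitting with all the $\ell_n$: a naive splitting of $W$ would not make them homogeneous, and it is the tensor-functoriality of the canonical bigrading that forces this. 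The same point requires care over the base field, since the $I^{p,q}$ are defined over $\CC$; here one uses that the weight grading $\Gr^W$ is already $\kk$-rational to descend the weighted-homogeneous presentation from $\CC$ to $\kk$.
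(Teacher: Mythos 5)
Your overall strategy --- transfer the $L_\infty$ structure to $H(L)$, make the operations $\ell_n$ weight-homogeneous via a splitting of $W$, then bound the degrees using positivity of the weights on $H^1(L)$ and finiteness of the weights on $H^2(L)$ --- is the paper's strategy, and your final bookkeeping paragraph coincides with the paper's proof of Theorem~\ref{theorem:weights-finite-presentation}. But the step you yourself call the key point is a genuine gap: you assert that ``the homotopy transfer can be performed with contractions respecting the Hodge data, so each $\ell_n$ is a morphism of mixed Hodge structures.'' There is no mixed Hodge structure at the cochain level of $L$ for a contraction to respect: a mixed Hodge complex is a pair of filtered complexes over $\kk$ and over $\CC$ linked by comparison quasi-isomorphisms, its differential is not strict for $W$, and the weight filtration on $H^n(L)$ is induced by the chain-level $W$ only after a shift depending on the degree $n$ (d\'ecalage, via $E_2$-degeneration of the $W$-spectral sequence). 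Because of this shift, even a contraction chosen $W$-filtered at chain level would produce operations $\ell_r$ that move cohomological weights by $2-r$ rather than preserve them, so for $r\geq 3$ your Deligne-splitting argument would not apply to the operations you actually obtain. The existence of a transfer making all $\ell_r$ morphisms of MHS is exactly what the paper flags as \emph{not} available: see the remark following Theorem~\ref{theorem:weight-grading} (``Ideally the operations $\ell_r$ would be morphisms of mixed Hodge structures\dots work in progress''). You have assumed the crux.

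The paper closes this gap by a different mechanism. It invokes the $E_1$-formality theorem of Cirici--Horel \cite{CiriciHorel}: over $\kk$, $(L,W)$ is quasi-isomorphic as a DG Lie algebra to $\mathscr{M}:=E_1^W(L)$, the first page of the weight spectral sequence, which carries an honest weight \emph{grading}; by $E_2$-degeneration $H(\mathscr{M})=H(L)$ and this grading splits the (correctly shifted) weight filtration on cohomology. One then performs homotopy transfer with splittings chosen compatibly with the grading (as in Budur--Rubi\'o \cite{BudurRubio}), so the transferred $\ell_r$ are weight-additive by construction, with the d\'ecalage built into $E_1^W$ and no appeal to any MHS-morphism property. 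This route also dissolves your base-field issue: everything happens over $\kk$ from the start, whereas your proposed descent (``$\Gr^W$ is $\kk$-rational'') is insufficient --- an arbitrary $\kk$-splitting of $W$ only makes weight-compatible maps lower-triangular, not homogeneous, and the Deligne bigrading $I^{p,q}$ is genuinely only defined over $\CC$. A smaller omission: in your Maurer--Cartan description you drop the gauge/homotopy relation; the paper's hypothesis $H^n(L)=0$ for $n\leq 0$ is what makes that relation trivial and lets one pro-represent $\Def_L$ by the stated quotient of the power series ring.
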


This relates as follows to the case treated by Kapovich-Millson: the MHS on $H^1(L)$ directly comes from the MHS on $H^1(X)$ and has only weights $1,2$, and the MHS on $H^2(L)$ directly comes from the one of $H^2(X)$ and has only weights $2,3,4$. Our method allows us to perform such a detailed analysis from the weights of $H^\bullet(L)$. We sum this up in our last section~\ref{subsection:consequences}.

In particular the hypothesis on the weights of $H^1(L)$ is satisfied when $\rho$ comes from a VHS because then $\ad_\rho$ is a VHS of weight zero. So we get:

\begin{theorem}
If $\rho$ is the monodromy representation of an admissible VHS over $X$ with unipotent monodromy at infinity then $\Ohat_\rho$ has a weighted-homogeneous presentation.
\end{theorem}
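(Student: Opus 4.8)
The plan is to deduce this statement as a corollary of the two preceding theorems — the geometric construction of $L$ and the purely algebraic result on weighted-homogeneous presentations — so that the only substantive work is to verify, by Hodge theory, that the hypothesis of the algebraic theorem holds here, namely that $H^1(L)$ carries only strictly positive weights.

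First I would observe that a polarizable pure VHS of weight $w$ is in particular a graded-polarizable VMHS, its weight filtration being concentrated in the single degree $w$; that it is admissible in $\overline{X}$, since polarizable pure variations are automatically admissible (the admissibility condition being a genuine constraint only in the mixed case); and that it is unipotent at infinity by hypothesis. Therefore the construction theorem above applies and produces an object $L$, simultaneously a DG Lie algebra and a mixed Hodge complex, computing $H^\bullet(X,\ad(\rho))$ together with its MHS.

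The heart of the argument is then the weight computation on $H^1(L)$. Writing $\mathbb{V}$ for the underlying pure VHS of weight $w$, the adjoint local system $\ad(\rho)$ sits inside $\End(\mathbb{V})\simeq\mathbb{V}\otimes\mathbb{V}^{\vee}$ as a sub-VHS; since $\mathbb{V}$ has weight $w$ and its dual $\mathbb{V}^{\vee}$ has weight $-w$, the tensor product has weight $0$, whence $\ad(\rho)$ is a polarizable pure VHS of weight $0$. Now I would invoke the standard weight bound for the cohomology of a smooth variety with coefficients in a pure polarizable variation: for a VHS $\mathbb{W}$ of weight $k$ on $X$, every weight occurring on $H^n(X,\mathbb{W})$ is $\geq n+k$. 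This bound is precisely what is encoded in the weight filtration of the mixed Hodge complex $L$ built above — the lowest-weight part of $H^n$ coming from the pure piece supported on $\overline{X}$, the higher weights arising from the residues along the boundary divisor $D$. Applying this with $n=1$ and $k=0$ gives that every weight on $H^1(L)=H^1(X,\ad(\rho))$ is $\geq 1$, hence strictly positive; concretely only the weights $1$ and $2$ occur, matching the Kapovich–Millson picture.

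With the hypothesis verified, the algebraic theorem above applies verbatim and yields the desired weighted-homogeneous presentation of $\Ohat_\rho$, the weights of the generators being read off from $H^1(L)$ and the weights of the relations bounded by those of $H^2(L)$. The one delicate point — the expected main obstacle — is exactly the positivity statement of the previous paragraph: it is the single place where the geometry of the pure variation enters, and it has to be extracted from the explicit weight filtration on $L$, since the algebraic formalism is otherwise insensitive to the sign of the weights. Once this is in place, everything else is a direct specialization of the general machinery already developed.
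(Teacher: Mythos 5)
Your proposal is correct and follows essentially the paper's own route: you observe that $\ad(\rho)\subset\End(\mathbb{V})$ is a polarizable pure VHS of weight $0$ (automatically admissible by Schmid's results, so the construction of the mixed Hodge diagram $L$ applies), you invoke the weight bound that $H^i(X,\ad(\rho))$ has weights $\geq i$ to get strict positivity on $H^1(L)$, and you then specialize the algebraic theorem on weighted-homogeneous presentations, exactly as the paper does. One small correction: your parenthetical claim that concretely only the weights $1$ and $2$ occur on $H^1(L)$ is false in this generality --- for a degenerating VHS the upper weights depend on the order of nilpotency of the local monodromy logarithms and are unbounded without further hypotheses (the paper says this explicitly in its list of consequences; weights $1,2$ occur only when the VHS extends to $\overline{X}$ or $\rho$ has finite image) --- but this remark is never used in your argument, since strict positivity on $H^1(L)$ together with finite-dimensionality of $H^2(L)$ is all the algebraic theorem requires.
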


The precise understanding of the weights depends on the behavior of the VHS near infinity and this is easier so see in the case of curves. So once again we find interesting to treat these cases separately. For VMHS's, our method does not allow us to show that the presentation is finite, but it sill allows to write down weighted-homogeneous equations.

\subsection{Acknowledgments}
I thank notably Yohan Brunebarbe, Nero Budur, Joana Cirici, Brad Drew, Clément Dupont, Konstantin Jakob, Marcel Rubi\'o,
for useful conversations all along the preparation of this work. I thank particularly Philippe Eyssidieux and Jochen
Heinloth for conversations as well as for comments on the final version of this work.


\subsection{Some updates}
Several years have past since this prepublication was first uploaded and sent for review; in the meantime I left Academia for teaching. The claim that we could deal with representations with local quasi-unipotent monodromy has been removed and reduced to only local unipotent monodromy. The important article~\cite{BakkerBrunebardeTsimerman} from Bakker-Brunebarbe-Tsimerman appeared and vastly generalizes our results in the language of mixed twistor structures (\S~4 therein) with applications to the Shafarevich conjecture on universal covers of complex algebraic varieties in the non-proper case. We thank the referee for his valuable comments on the first submission of this work an apologize for the long time it took to fix it. 

\section{Reminder on mixed Hodge theory}
\label{section:Hodge}

We will denote by $\kk$ a fixed sub-field of $\RR$. We always denote increasing filtrations by a lower index and decreasing filtrations by an upper index: $W_\bullet, F^\bullet$. Filtrations of cochain complexes are always assumed to be biregular, i.e.\ they restrict to a finite filtration in each degree.

\subsection{Mixed Hodge structures}
Recall briefly that a \emph{pure Hodge structure} (HS) of weight $k$ over $\kk$ is given by a finite-dimensional vector space $H_\kk$ over $\kk$ with a decomposition $H_\CC:=H_\kk\otimes\CC=\bigoplus_{p+q=k} H^{p,q}$ with $\overline{H^{p,q}}=H^{q,p}$, complex conjugation being taken with respect to $H_\kk\subset H_\CC$. In this case the Hodge filtration $F^\bullet$ is defined as $F^p H_\CC:= \bigoplus_{r\geq p} H^{r,q}$.

A \emph{mixed Hodge structure} (MHS) is given by a finite-dimensional vector space $H_\kk$ over $\kk$ equipped with an increasing filtration $W_\bullet$ and a decreasing filtration $F^\bullet$ of $H_\CC$, such that each term $\Gr^W_k H_\kk:=W_k H_\kk/W_{k-1}H_\kk$ with the induced filtration $F^\bullet$ over $\CC$ is a pure Hodge structures of weight $k$, whose bigraded terms are again denoted by $H^{p,q}$.

For a pure Hodge structure $H$ of weight $k$, a \emph{polarization} is given by a bilinear form $Q:H_\kk\otimes H_\kk \rightarrow \kk$ which is $(-1)^k$-symmetric and satisfies the two relations
\begin{enumerate}
\item $Q(H^{p,q}, H^{r,s})=0$ if $(p,q)\neq (r,s)$,
\item $i^{p-q} Q(v,\bar{v})>0$ for $0\neq v\in H^{p,q}$.
\end{enumerate}

Mixed Hodge structures form an abelian category with tensor product and internal Hom. Hence one can consider various kinds of algebras internally to MHS: for example a Lie bracket $[-,-]$ on the MHS $H$ has to be defined over $\kk$ and satisfy $[W_k H_\kk, W_\ell H_\kk]\subset W_{k+\ell} H_\kk$ and $[F^p H_\CC, F^r H_\CC]\subset F^{p+r} H_\CC$.

A direct sum of pure HS's of various weights is automatically a MHS with a split weight filtration. Such MHS's are said to be \emph{split}. In general however, MHS's coming from the cohomology of complex varieties are not split. Nevertheless, there exists a canonical way of splitting a MHS $H$ over $\CC$ by defining subspaces $A^{p,q}\subset H_\CC$ that projects to $H^{p,q}$. This splitting is functorial and compatible with tensor products. In this work, after having constructed a MHS, we will be mainly interested in describing such a splitting.

\subsection{Variations of mixed Hodge structure}
\label{section:VMHS}
Let $X$ be any complex manifold. We will always use the letter $\Omega_X^\bullet$ to denote the complex of holomorphic differential forms on~$X$.

\begin{definition}
A \emph{variation of Hodge structure} (VHS) of weight $k$ over $X$ is the data of a local system $\mathbb{V}$ of finite-dimensional $\kk$-vector spaces, to which is associated a flat connection $\nabla$ on the holomorphic vector bundle $\mathcal{V}:=\mathbb{V}\otimes_{\kk}\mathcal{O}_X$,
and a filtration of $\mathcal{V}$ by holomorphic sub-vector bundles $\mathcal{F}^\bullet$, such that at each point $x\in X$ the data $(\mathcal{V}_x, \mathcal{F}^\bullet_x)$ forms a pure Hodge structure of weight $k$, and furthermore $\nabla$ satisfies the transversality condition $\nabla(\mathcal{F}^p)\subset \Omega_X^1 \otimes\mathcal{F}^{p-1}$.

A \emph{polarization} for the VHS $\mathbb{V}$ is a flat bilinear pairing defined over $\kk$, $Q:\mathbb{V}\otimes\mathbb{V}\rightarrow \kk$, such that each $(\mathcal{V}_x, \mathcal{F}^\bullet_x, Q_x)$ is a polarized Hodge structure.
\end{definition}

In the following we will work only with polarizable variations of Hodge structure.

\begin{definition}[{\cite[3.4]{SteenbrinkZucker}}]
A \emph{variation of mixed Hodge structure} (VMHS) over $X$ is the data of a local system $\mathbb{V}$ of finite-dimensional vector spaces over $\kk$ and a filtration $W_\bullet$ of $\mathbb{V}$ by sub-local systems $\mathbb{W}_\bullet$, together with a filtration as above of $\mathcal{V}=\mathbb{V}\otimes_{\kk}\mathcal{O}_X$ by sub-vector bundles $\mathcal{F}^\bullet$, such that the induced flat connection again satisfies $\nabla(\mathcal{F}^p)\subset \Omega_X^1 \otimes\mathcal{F}^{p-1}$ and furthermore each $\Gr^{W}_k\mathbb{V}$ with the induced $\mathcal{F}^\bullet$ forms a variation of Hodge structure of weight~$k$.

It is said to be \emph{graded-polarizable} if each term $\Gr^W_k\mathbb{V}$ is a polarizable VHS.
\end{definition}

\subsection{Mixed Hodge complexes}
In the original article of Deligne, mixed Hodge structures on the cohomology groups of $X$ are constructed by first compactifying $X$ into $j:X\hookrightarrow \overline{X}$ with complement a divisor $D$ with normal crossings and defining the weight and Hodge filtrations on the complex of forms with logarithmic poles $\Omega_X^\bullet(\log D)$, and a weight filtration on the complex $Rj_{*} \underline{\QQ}_X$. Then showing that these two complexes are related via a $W$-filtered quasi-isomorphism, with the $W$-spectal sequence computing the weight-graded pieces of $H^\bullet(X,\QQ)$. This last step has been abstracted into the notion of mixed Hodge complex: these are filtered complexes, naturally living in a filtered derived category, with axioms to check that will ensure that their cohomology carries a MHS in each degree.

\begin{definition}
A \emph{pure Hodge complex of weight $k$} is the data of a bounded-below complex $K^\bullet_\kk$ over $\kk$ with each $H^i(K_\kk)$ finite-dimensional, a bounded-below filtered complex $(K^\bullet_\CC, F^\bullet)$ over $\CC$, and a chain of quasi-isomorphisms $K_\kk\otimes\CC \approx K_\CC$, such that
\begin{enumerate}
\item The differential $d$ of $K_\CC$ is strictly compatible with $F$,
\item The filtration $F$ defines a pure Hodge structure of weight $i+k$ on $H^i(K_\CC)$, with a form over $\kk$ coming
from $H^i(H_\kk)$.
\end{enumerate}

A \emph{mixed Hodge complex} (MHC) is the data of a filtered bounded below-complex $(K^\bullet_\kk,W_\bullet)$ over $\kk$ with each $H^i(K_\kk)$ finite-dimensional, a bifiltered bounded-below complex $(K^\bullet_\CC, W_\bullet, F^\bullet)$ over $\CC$, a filtered quasi-isomorphism $(K_\kk, W)\otimes\CC \approx (K_\CC,W)$, such for all $k$ the data of $\Gr^W_k(K_\kk)$ and $(\Gr^W_k(K_\CC),F^\bullet)$ forms a pure Hodge complex of weight~$k$.
\end{definition}

The main theorem of Deligne is that for a mixed Hodge complex $K$ the $W$-spectral sequence of $K$ degenerates at $E_2$ and forms the weight-graded pieces of a MHS on each $H^i(K)$ with the induced filtration $F$ and the shifted filtration $W[i]$ with $W[i]_k:=W_{i+k}$.

For the needs of rational homotopy theory, Morgan introduced the notion of \emph{mixed Hodge diagram} which is simply a mixed Hodge complex $K$ whose both components $K^\bullet_\kk$, $K^\bullet_\CC$ are commutative DG algebras (non-negatively graded, and with multiplication compatible with the filtrations) and the quasi-isomorphisms relating them are quasi-isomorphisms of commutative DG algebras. Though being algebraically a simple variant of the notion of mixed Hodge complexes, the construction of such mixed Hodge diagrams is more complicated because the commutative DG algebra $K^\bullet_\kk$ does not come from usual differential forms nor from singular cochains, see \S~\ref{section:ThomWhitney}. Similarly we introduced:

\begin{definition}[{\cite[\S~8]{Lefevre2}}]
A \emph{mixed Hodge diagram of Lie algebras} is a mixed Hodge complex $L$ whose both components $L_\kk$, $L_\CC$ are also DG Lie algebras, with bracket compatible with the filtrations, and the quasi-isomorphisms between them being quasi-isomorphisms of DG Lie algebras.
\end{definition}

Mixed Hodge complexes can be naturally considered into their derived category, where a quasi-isomorphism between the MHC $K,L$ is given by filtered quasi-isomorphisms between $(K_\kk,W) \approx (L_\kk, W)$ and bifiltered quasi-isomorphisms $(K_\CC,W,F) \approx (L_\CC, W,F)$. The notions of pure and mixed Hodge complex also exist at the level of sheaves, and this will be more practical for us. Let us assume that we work with $R\Gamma$ defined using the Godement resolution for sheaves, that also provides us with functorial resolution as well as filtered and bifiltered resolutions.

\begin{definition}
Let $X$ be a topological space. A \emph{pure Hodge complex of sheaves} of weight $k$ over $X$ is the data of a bounded-below complex of sheaves $\mathcal{K}^\bullet_\kk$ over $\kk$, a bounded-below filtered complex of sheaves $(\mathcal{K}^\bullet_\CC,F^\bullet)$ over $\CC$, and a quasi-isomorphism $\mathcal{K}_\kk\otimes\CC \approx \mathcal{K}_\CC$, such that applying $R\Gamma(X,-)$ to all this data (i.e.\ to the complexes, the filtrations, and the quasi-isomorphisms between them) gives a pure Hodge complex of weight~$k$.

Similarly, a \emph{mixed Hodge complex of sheaves} over $X$ is the data of a filtered bounded-below complex of sheaves $(\mathcal{K}^\bullet_\kk,W_\bullet)$ over $\kk$, a bifiltered bounded-below complex of sheaves $(\mathcal{K}^\bullet_\CC,W_\bullet,F^\bullet)$ over $\CC$, and a filtered quasi-isomorphism $(\mathcal{K}_\kk,W)\otimes\CC \approx (\mathcal{K}_\CC,W)$ such that applying $\Gr^W_k$ to all this data gives a pure Hodge complex of sheaves of weight $k$; in this case applying $R\Gamma(X,-)$ gives a mixed Hodge complex.
\end{definition}

\begin{remark}
\label{remark:C-MHS}
Mixed Hodge structures and mixed Hodge complexes can also be defined over the base field $\kk=\CC$ but this requires an appropriate modification of the definitions, see for example \cite[\S~1]{EyssidieuxSimpson}. Essentially we drop the complex conjugation but we introduce a third filtration, denoted by $\overline{G}^\bullet$, playing the role of the conjugate $\overline{F}$ of $F$. The properties of $\overline{F}$ that would have followed simply by conjugation from $F$ need to be included as axioms for $\overline{G}$. In any MHC over $\kk$, keeping only its component over $\CC$ and the three filtrations $W,F,\overline{F}$ defines a MHC over $\CC$.
\end{remark}

\section{Thom-Whitney functors}
\label{section:ThomWhitney}
\subsection{Lax monoidal symmetric functors}
\label{section:lax-monoidal}
Our results are actually easier to express using a bit more of categorical notions around the tensor product. Recall briefly (see \cite[VII, XI]{MacLane} and \cite[B.3]{LodayVallette}) that a \emph{monoidal category} is a category $\mathcal{C}$ equiped with a tensor product $\otimes$ and an object $1$, together with natural associativity isomorphsims $x\otimes (y\otimes z) \simeq (x\otimes y)\otimes z$ and $1\otimes x\simeq x$, $x\otimes 1 \simeq x$; these isomorphisms themselves have to satisfy further coherence axioms such as the commutativity of the pentagonal diagram. The monoidal category $\mathcal{C}$ is said to be \emph{symmetric} if for any two objects $x,y$ there is an isomorphism $s_{x,y}:x\otimes y \simeq y \otimes x$ and these isomorphisms furthermore satisfy an additionnal coherence axiom, the hexagonal diagram.

The following categories are well-known to be symmetric monoidal:
\begin{enumerate}
\item vector spaces, or sheaves of vector spaces,
\item MHS, VMHS, admissible VMHS with unipotent monodromy at infinity ; the tensor product does, on one side, the tensor product of the underlying local systems (and of their filtrations), and on the other side, the tensor product of vector bundles with connection,
\item \label{item:symmetric-monoidal-cochain-complex}
cochain complexes, sheaves of cochain complexes; in this case the isomorphism $s_{K,L}:K\otimes L \simeq L\otimes K$ sends an element $x\otimes y$ (with $x,y$ of respective degrees $i,j$) to $(-1)^{ij} y\otimes x$,
\item MHC, MHC of sheaves.
\end{enumerate}

In any such category, one can define a \emph{commutative algebra object} to be an object $x$ with a multiplication $\mu:x\otimes x\rightarrow x$ and a unity $1\rightarrow x$, satisfying natural associativity, unity and commutativity axioms. In particular the commutativity condition can be written as $\mu\circ s_{x,x} = \mu$. In the case of cochain complexes (\ref{item:symmetric-monoidal-cochain-complex}) above, it is the particular choice of the isomorphism $s_{x,y}$ that is responsible for the Koszul rule of sign $\mu(y,x)=(-1)^{ij}\mu(x,y)$.

One can also define a \emph{Lie algebra object} in $\mathcal{C}$ as an object $L$ with a bracket $B: L\otimes L\rightarrow L$ that is anti-symmetric (here this means $B\circ s_{L,L}=-B$) and satisfies the Jacobi identity (this can be written with compositions of $B$ and of permutations of $L\otimes L\otimes L$). So for example, a mixed Hodge diagram of Lie algebras is just a Lie algebra object in the symmetric monoidal category of MHC.

There is a natural notion of functor between such monoidal categories that we will encounter. For example, let us recall that in the Theorem of Kunneth for cochain complexes $K,L$, there is first a natural map
\begin{equation*}
H(K)\otimes H(L) \longrightarrow H(K\otimes L)
\end{equation*}
that is, when working over a field, an isomorphism. In particular, if $K$ alone has a structure of associative algebra, then $H(K)$ becomes an associative algebra by the composition
\begin{equation*}
H(K)\otimes H(K) \longrightarrow H(K\otimes K) \longrightarrow H(K)
\end{equation*}
where the last map is induced by the multiplication of $K$. Here is the categorical notion:

\begin{definition}
\label{definition:lax-monoidal}
A \emph{lax monoidal functor} between the monoidal categories $(\mathcal{C},\otimes, 1_{\mathcal{C}})$ and $(\mathcal{D}, \otimes, 1_{\mathcal{D}})$ is the data of a functor beween categories $T:\mathcal{C}\rightarrow\mathcal{D}$ together with natural maps for all objects $x, y$ of $\mathcal{C}$
\begin{equation*}
T(x)\otimes T(y)\longrightarrow T(x\otimes y)
\end{equation*}
and a map
\begin{equation*}
1_{\mathcal{D}} \longrightarrow T(1_{\mathcal{C}})
\end{equation*}
satisfying further coherence axioms (\cite[XI.2]{MacLane}, \cite[B.3.3]{LodayVallette} to relate them with the associativity and unity of $\mathcal{C}$ and $\mathcal{D}$.
\end{definition}

These are simply called \emph{monoidal functors} in \cite[XI.2]{MacLane}, that uses the term of \emph{strong} functor when the above maps are actually isomorphisms and \emph{strict} functor when they are identities. So the lax condition is a weakening of these.

Now let us come to our main point by recalling the construction of the cup-product using sheaf cohomology with coefficients in $\QQ$ and the Godement resolution for sheaves, over a topological space $X$. The sheaf $\underline{\QQ}_X$ has a resolution $\underline{\QQ}_X\rightarrow \mathscr{F}^\bullet$ and the cup-product is an associative bilinear operation $\mathscr{F}\otimes\mathscr{F}\rightarrow\mathscr{F}$ that is not yet commutative. Then we take global sections over $X$ to define a map
\begin{equation*}
\Gamma(X,\mathscr{F})\otimes\Gamma(X,\mathscr{F})\longrightarrow \Gamma(X,\mathscr{F}\otimes\mathscr{F})\longrightarrow\Gamma(X,\mathscr{F})
\end{equation*}
with $\Gamma(X,\mathscr{F})=R\Gamma(X,\QQ)$. Finally we take cohomology of these cochain complexes, using the Theorem of Kunneth, and we get a multiplication on $H(X,\QQ)$. This one is furthermore commutative. Here we recognize that the Godement resolution functor, the global section functor over $X$, and the cohomology functor of cochain complexes, are lax monoidal between symmetric monoidal categories.

We could also have started from any sheaf or complex of sheaves $\mathscr{F}$ with a structure of associative algebra $\mathscr{F}\otimes \mathscr{F} \rightarrow \mathscr{F}$ and then get such a multiplication on $H(X,\mathscr{F})$ defined via the cup-product map
\begin{equation*}
R\Gamma(X,\mathscr{F}) \otimes R\Gamma(X,\mathscr{F}) \longrightarrow R\Gamma(X,\mathscr{F}\otimes \mathscr{F})
\end{equation*}
composed with the multiplication on $\mathscr{F}$. This gives again a structure of associative algebra.

However, even if the structure of algebra on $\mathscr{F}$ were commutative (and even for $\mathscr{F}=\underline{\QQ}_X$) then $H(X,\mathscr{F})$ is a commutative algebra (in the DG sense) but not $R\Gamma(X,\mathscr{F})$; and if $\mathscr{F}$ is a sheaf of Lie algebras then $H(X,\mathscr{F})$ is a DG Lie algebra but not $R\Gamma(X,\mathscr{F})$.  One reason why the cup-product is not commutative at the level of cochains is that the Godement resolution functor is lax monoidal between symmetric monoidal categories but is \emph{not} compatible with the symmetric structures, whereas the other functors (global sections, cohomology) are.

\begin{definition}
Let $T$ be a lax monoidal functor as in Definition~\ref{definition:lax-monoidal} between the \emph{symmetric} monoidal categories  $(\mathcal{C},\otimes, 1_{\mathcal{C}})$ and $(\mathcal{D}, \otimes, 1_{\mathcal{D}})$. Recall that $s_{x,y}$ is the symmmetry isomorphism $x\otimes y\simeq y\otimes x$. Then $T$ is said to be \emph{symmetric} if for any two objects $x,y$ of $\mathcal{C}$ the following diagram is commutative in $\mathcal{D}$:
\begin{equation*}
\xymatrix{
T(x)\otimes T(y) \ar[r] \ar[d]_{s_{T(x),T(y)}} & T(x \otimes y) \ar[d]^{T\circ s_{x,y}} \\
T(y)\otimes T(x) \ar[r]  & T(y \otimes x) }
\end{equation*}
\end{definition}

Our interest into this notion lies in the fact that it preserves commutative algebras, as well as Lie algebras, by the argument that we used several times.

\begin{proposition}
\label{proposition:lax-symmetric-preserves-commutative}
If $T:(\mathcal{C},\otimes, 1_{\mathcal{C}})\rightarrow (\mathcal{D}, \otimes, 1_{\mathcal{D}})$ is a lax monoidal symmetric functor and $x$ is a commutative algebra in $\mathcal{C}$, with multiplication $\mu$ and unity $e:1_{\mathcal{C}}\rightarrow x$, then $T(x)$ is a commutative algebra in $\mathcal{D}$ with multiplication the composition
\begin{equation*}
T(x)\otimes T(x)\longrightarrow T(x\otimes x) \stackrel{T(\mu)}{\longrightarrow} T(x)
\end{equation*}
and unity the composition $1_\mathcal{D}\rightarrow T(1_\mathcal{C}) \stackrel{T(e)}{\longrightarrow} T(x)$.

Similarly, if $L$ is a Lie algebra in $\mathcal{C}$ with bracket $B$ then $T(L)$ is a Lie algebra in $\mathcal{D}$ with bracket the composition
\begin{equation*}
T(L)\otimes T(L) \longrightarrow T(L\otimes L) \stackrel{T(B)}{\longrightarrow} T(L) .
\end{equation*}
\end{proposition}

So, if instead of the Godement resolution we can use another resolution functor, inspired from rational homotopy theory, that is directly lax monoidal symmetric then we get commutative algebras or Lie algebras directly at the level of cochain complexes. This is what the next functor is going to do, introduced by Navarro Aznar for the use in Hodge theory.

\subsection{Thom-Whitney resolutions}
In this section $\kk$ needs to be a field of characteristic zero. The reference is \cite[\S~1--6]{Navarro}.

For any bounded-below complex of sheaves of $\kk$-vector spaces $\mathscr{F}$ on $X$ is constructed the \emph{Thom-Whitney resolution} $\mathscr{F}\rightarrow \TW^* \mathscr{F}$ which is a lax monoidal symmetric functor and is quasi-isomorphic to the usual Godement resolution. Using this, for a continuous map $f:X \rightarrow Y$ between topological spaces is constructed the \emph{Thom-Whitney direct image} $R_\TW f_{*}\mathscr{F}$ which is lax monoidal symmetric and quasi-isomorphic to the usual $Rf_*\mathscr{F}$. For $f$ the constant map to a point, $R_\TW f_* \mathscr{F}$ is also denoted by $R_\TW \Gamma(X, \mathscr{F})$ and is called the \emph{Thom-Whitney global sections}. In particular (Proposition~\ref{proposition:lax-symmetric-preserves-commutative}) if $\mathscr{F}$ is a sheaf of commutative DG algebras over $\kk$ then so is $R_\TW f_{*}\mathscr{F}$; and $R_\TW\Gamma(X, \underline{\QQ}_X)$ looks like a commutative DG algebra of rational polynomial differentials forms over~$X$ that appears in rational homotopy theory and does not involve any more cup-product-like formulas. So Thom-Whitney functors are a functorial version of these that apply to resolutions of all sheaves.

In practice, we just need to recall all the properties that we will use.

\begin{proposition}
The Thom-Whitney functors have the following properties:
\begin{enumerate}
\item $R_\TW f_*\mathscr{F}$ is functorial in $\mathscr{F}$.
\item $R_\TW f_*$ is naturally homotopy-equivalent to the classical $Rf_*$ defined using the Godement resolution.
\item $R_\TW f_*$ is a lax monoidal symmetric functor between the categories of complexes of sheaves over $X$ and $Y$.
\item $\mathscr{F}\rightarrow \TW^* \mathscr{F}$ is a soft resolution of $\mathscr{F}$.
\item $R_\TW f_*$ transforms quasi-isomorphisms of complexes of sheaves into quasi-isomorphisms.
\item The Thom-Whitney resolution also provides filtered resolutions. If $\mathscr{F}$ is equipped with a filtration, then $R_\TW f_*\mathscr{F}$ gets an induced filtration, and filtered quasi-isomorphism of complexes of sheaves are sent to filtered quasi-isomorphisms.
\item If $g:Y\rightarrow Z$ is a second continuous map of topological spaces the the functors $R_\TW (g\circ f)_*$ and
$R_\TW g_* \circ R_\TW f_*$ are naturally quasi-isomorphic. Similarly, for filtered complexes of sheaves, the functors
are filtered quasi-isomorphic.
\end{enumerate}
\end{proposition}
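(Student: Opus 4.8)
The plan is to follow Navarro Aznar \cite{Navarro} and realize $R_\TW f_*$ as the \emph{Thom--Whitney simple} of the Godement cosimplicial resolution. Concretely, write $G^\bullet(\mathscr{F})$ for the cosimplicial complex of sheaves produced by iterating the Godement comonad, and let $\Omega_\bullet(\Delta^n)$ denote Sullivan's commutative DG algebra of polynomial differential forms on the standard $n$-simplex with coefficients in $\kk$; here the characteristic-zero hypothesis is essential, since integrating monomials $t^k\,dt$ forces one to invert integers. One then sets
\begin{equation}
R_\TW f_* \mathscr{F} := \int_{[n]\in\Delta} \Omega_\bullet(\Delta^n)\otimes f_* G^n(\mathscr{F}),
\end{equation}
the end being taken over the simplicial category, so that an element is a family indexed by $n$ that is compatible with all coface and codegeneracy maps. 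Property (1) is then immediate, since the Godement resolution is functorial in $\mathscr{F}$ and the end is a functorial construction.

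The conceptual payoff is property (3), and it is the whole reason for using $\Omega_\bullet(\Delta^\bullet)$ in place of the usual normalized cochains. The wedge product $\Omega_\bullet(\Delta^n)\otimes\Omega_\bullet(\Delta^n)\to\Omega_\bullet(\Delta^n)$ is graded-commutative, and composing it with the lax monoidal structure maps $G^\bullet(\mathscr{F})\otimes G^\bullet(\mathscr{G})\to G^\bullet(\mathscr{F}\otimes\mathscr{G})$ of the Godement construction yields the structure maps $R_\TW f_*\mathscr{F}\otimes R_\TW f_*\mathscr{G}\to R_\TW f_*(\mathscr{F}\otimes\mathscr{G})$. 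Commutativity of the wedge product is exactly what forces the symmetry square to commute, so the functor is lax \emph{symmetric} monoidal; the identity and associativity conditions are inherited from those of $\Omega_\bullet(\Delta^\bullet)$ and of $G^\bullet$.

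The genuinely technical heart, which yields (2) and hence (5), is the comparison of the Thom--Whitney simple with the classical simple complex (the ordinary totalization carrying the non-commutative Alexander--Whitney product) that computes $Rf_*$. I would invoke Dupont's fiber-integration map $\int_{\Delta^n}\colon\Omega_\bullet(\Delta^n)\to\kk$ together with the Whitney elementary forms, which furnish an explicit homotopy equivalence between $\Omega_\bullet(\Delta^\bullet)$ and the simplicial cochain algebra; feeding this into the two simple constructions gives the natural homotopy equivalence $R_\TW f_*\simeq Rf_*$ of (2). Since $Rf_*$ preserves quasi-isomorphisms and the polynomial forms are flat over the field $\kk$, property (5) follows; the resolution statement of (4) is the special case $f=\id$, and softness of the terms of $\TW^*\mathscr{F}$ is obtained as in \cite{Navarro} from the flasqueness of the Godement sheaves. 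For the filtered statement (6) I would observe that the whole construction is assembled from flat tensor products, hence is exact and commutes with $\Gr^W$, so that $\Gr^W R_\TW f_*\simeq R_\TW f_*\,\Gr^W$; a filtered quasi-isomorphism thereby reduces to an ordinary one on each graded piece, and (5) applies.

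The main obstacle I anticipate is property (7), the Thom--Whitney analogue of the Leray composition $R(g\circ f)_*\simeq Rg_*\circ Rf_*$. The difficulty is that iterating the construction produces a \emph{bi}-cosimplicial object resolved by forms on products $\Delta^n\times\Delta^m$, whereas the single functor uses forms on one simplex. The comparison therefore rests on a Künneth/Eilenberg--Zilber quasi-isomorphism $\Omega_\bullet(\Delta^n)\otimes\Omega_\bullet(\Delta^m)\xrightarrow{\ \approx\ }\Omega_\bullet(\Delta^n\times\Delta^m)$ together with a Fubini-type interchange of the two ends; assembling these compatibly with the Godement composition and checking that the resulting homotopies are natural is where the real bookkeeping lies. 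The filtered variant of (7) then follows by the same $\Gr^W$-reduction used for (6).
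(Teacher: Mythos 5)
The paper gives no proof of this proposition at all --- it is a recollection whose justification is delegated wholesale to Navarro Aznar \cite[\S~1--6]{Navarro} --- and your proposal reconstructs precisely that source's approach: the Thom--Whitney simple (the end $\int_{[n]}\Omega_\bullet(\Delta^n)\otimes f_*G^n(\mathscr{F})$) of the cosimplicial Godement resolution, symmetry from graded-commutativity of Sullivan's polynomial forms, Dupont's integration and Whitney elementary forms for the homotopy equivalence with the classical simple, and an Eilenberg--Zilber map plus interchange of ends for the composition property (7). Two harmless nits: the Godement resolution arises from a monad rather than a comonad, and exactness of the end in your reduction for (6) rests on extendability of the simplicial algebra $\Omega_\bullet(\Delta^\bullet)$ rather than on flatness alone --- both standard points that do not affect the argument.
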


\subsection{Application to Hodge theory}
\label{subsection:application-Hodge}
From the point of view of Hodge theory, this has the following consequence. Let $X\subset \overline{X}$ be an open subset of a compact Kähler manifold or of a proper algebraic variety whose complement is a normal crossing divisor $D$. Let $j:X\hookrightarrow \overline{X}$. Navarro Aznar (\cite[\S~7--8]{Navarro}) shows that on $R_\TW j_* \underline{\QQ}_X$ there is a canonical filtration $\tau$ such that
\begin{equation*}
\bigl((R_\TW j_* \underline{\QQ}_X, \tau),\ (\Omega^\bullet_{\overline{X}}(\log D), W, F) \bigr)
\end{equation*}
is a mixed Hodge diagram of sheaves whose components are commutative DG algebras.
From the axioms of Thom-Whitney resolutions that we wrote above, it follows that applying $R_\TW\Gamma(\overline{X},-)$ to the above diagrams (i.e.\ to each component and to the filtered quasi-isomorphisms between them) gives a mixed Hodge diagram of commutative DG algebras, whose cohomology computes the cohomology of $X$ and puts a MHS on it. Furthermore such a diagram is functorial for morphisms, up to natural quasi-isomorphisms.

Let us sum up and relate to our introduction and \S~\ref{subsection:results}. We have an (admissible) VMHS $\mathbb{V}$ over $X$ for which we know that there exists a mixed Hodge complex $\operatorname{MHC}(X,\overline{X},\mathbb{V})$ that computes $H(X,\mathbb{V})$ and puts a MHS on it. We want to construct it using the Thom-Whitney functors \emph{in such a way that the functor
\begin{equation}
\mathbb{V} \longmapsto \operatorname{MHC}(X,\overline{X},\mathbb{V})
\end{equation}
is lax monoidal symmetric}, from the category of VMHS to the category of MHC. Then, if we apply this to a VMHS with a Lie bracket such as the adjoint local system $\ad_{\mathbb{V}}$ then the above MHC is automatically a mixed Hodge diagram of Lie algebras. And actually it is enough to construct a MHC of sheaves $\operatorname{\mathcal{MHC}}(X,\overline{X},\mathbb{V})$ over $\overline{X}$ such that $\mathbb{V}\mapsto \operatorname{\mathcal{MHC}}(X,\overline{X},\mathbb{V})$ is a lax monoidal symmetric functor (from VMHS to MHC of sheaves) since then we will simply put
\begin{equation}
\operatorname{MHC}(X,\overline{X},\mathbb{V}) := R_\TW\Gamma(\overline{X},\operatorname{\mathcal{MHC}}(X,\overline{X},\mathbb{V}))
\end{equation}
and this will be a MHC that is again (as a composition) lax monoidal symmetric as a functor of $\mathbb{V}$. For brevity we will simply say that we construct MHC's that are lax monoidal symmetric in $\mathbb{V}$.

\section{Non-degenerating VHS}
\label{section:non-degenerate}

In order to explain our ideas, we start by applying them to the case of a VHS $\mathbb{V}$ over a compact Kähler manifold $\overline{X}$. The VHS is defined over a field $\kk\subset \RR$, has some weight $w\in \ZZ$ and it is assumed to be polarizable.

\subsection{Compact base}
\label{section:VMHS-compact}
We first work over the compact base $X=\overline{X}$ itself and re-write the construction of our previous work. There is a pure Hodge structure of weight $w+i$ on $H^i(X, \mathbb{V})$, for all $i$, constructed by Deligne-Zucker. Define a filtration $F^\bullet$ on $\Omega^\bullet_X(\mathbb{V}):=\Omega^\bullet_X\otimes \mathbb{V}$ by
\begin{equation}
F^p(\Omega^\bullet_X(\mathbb{V})) = \sum_{r+e=p} F^r\Omega_X\otimes \mathcal{F}^e
\end{equation}
where $\mathcal{F}^\bullet$ is the filtration of $\mathbb{V}\otimes\mathcal{O}_X$ by holomorphic sub-vector bundles.
In other words the classes of Hodge type $(p,q)$ in $H^i(X, \mathbb{V})$ will come from differential forms of type $(r,s)$
with values in $\mathcal{V}^{e,f}$ (the $\mathcal{C}^\infty$ vector bundle of Hodge type $(e,f)$) for $(r+e,s+f)=(p,q)$.
The statement is then:
\begin{theorem}[{\cite[Thm.~2.9, Cor.~2.11]{Zucker}}]
If $X$ is compact Kähler and $\mathbb{V}$ is a polarizable VHS of weight $w$ then
\begin{equation*}
\bigl(\mathbb{V},\  (\Omega^\bullet_X(\mathbb{V}), F) \bigr)
\end{equation*}
is a pure Hodge complex of sheaves of weight $w$, which is lax monoidal symmetric in~$\mathbb{V}$.
\end{theorem}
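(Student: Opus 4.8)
The plan is to read off the three pieces of data from the statement and then separate the argument into its Hodge-theoretic content (which is imported from Zucker) and the monoidal content (which is the genuinely new assertion). The $\kk$-component is the local system $\mathbb{V}$ placed in degree zero, and the $\CC$-component is the holomorphic de Rham complex $(\Omega^\bullet_X(\mathbb{V}),\nabla)$ equipped with the convolution filtration $F^p=\bigoplus_{r+e=p}F^r\Omega_X\otimes F^e\mathbb{V}_\CC$ displayed before the statement. The comparison quasi-isomorphism $\mathbb{V}\otimes\CC\approx\Omega^\bullet_X(\mathbb{V})$ at the level of sheaves is the holomorphic Poincaré lemma with coefficients: since $\nabla$ is flat, choosing a local horizontal frame writes the complex as a direct sum of ordinary holomorphic de Rham complexes, so its sheaf of horizontal sections is exactly $\mathbb{V}_\CC=\mathbb{V}\otimes\CC$ and the complex is a resolution of it. Thus the data has the required shape, and it remains only to verify the two conditions in the definition of a pure Hodge complex after applying $R\Gamma(X,-)$.

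Finite-dimensionality of each $H^i$ follows from the compactness of $X$. The strictness of the differential with respect to $F$ (equivalently, the $E_1$-degeneration of the Hodge-to-de Rham spectral sequence of the filtered complex) together with the assertion that $F$ induces a pure Hodge structure of weight $w+i$ on $H^i(X,\mathbb{V})$ is precisely the theorem of Deligne--Zucker already being cited, whose analytic heart is harmonic theory on the compact Kähler $X$ with the Laplacian attached to the polarization. The one point I would make explicit is that the filtration produced by that harmonic description agrees with the convolution filtration $F$ above; this is the bookkeeping of the Hodge type $(r,s)$ of a form against the type $(e,f)$ of its coefficient, exactly as indicated in the discussion preceding the statement. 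The $\kk$-structure on the Hodge structure then comes from $H^i(X,\mathbb{V})$ by construction, since $\mathbb{V}$ is defined over $\kk$ and its complexification computes the de Rham cohomology.

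The new part is the lax symmetric monoidal structure, and this I would establish by hand. On the $\kk$-side the structure map is the identity of $\mathbb{V}_1\otimes\mathbb{V}_2$; on the $\CC$-side it is the wedge-and-tensor product
\[
\Omega^p_X(\mathbb{V}_1)\otimes\Omega^q_X(\mathbb{V}_2)\longrightarrow\Omega^{p+q}_X(\mathbb{V}_1\otimes\mathbb{V}_2),\qquad (\alpha\otimes v_1)\otimes(\beta\otimes v_2)\longmapsto(\alpha\wedge\beta)\otimes(v_1\otimes v_2).
\]
I would check that this is a morphism of complexes (the Leibniz rule for the tensor-product connection $\nabla_1\otimes 1+1\otimes\nabla_2$), that it is compatible with the comparison quasi-isomorphisms (both Poincaré resolutions are multiplicative), and that it sends $F^{p_1}\otimes F^{p_2}$ into $F^{p_1+p_2}$, which is immediate from the convolution formula since the Hodge filtration on $\mathbb{V}_1\otimes\mathbb{V}_2$ is itself the convolution of those on the factors. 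Symmetry is the compatibility of the graded-commutativity of the de Rham complex, with its Koszul signs, with the symmetry constraint on the tensor product of local systems; identity and associativity are formal.

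The only hard input is Zucker's Hodge theorem, which I am invoking wholesale. Everything I must actually carry out is the assembly of the comparison map and the routine verification of the monoidal compatibilities, and the single delicate bookkeeping step is matching the two descriptions of the filtration $F$.
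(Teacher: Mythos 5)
Your proposal is correct and takes essentially the same route as the paper: the Hodge-complex axioms (strictness of $d$ with respect to $F$ and purity of weight $w+i$ on $H^i$) are imported wholesale from Deligne--Zucker, and the lax symmetric monoidal structure is exhibited via the wedge-and-tensor product on the de Rham side, compared with the tensor product of local systems through the holomorphic Poincaré lemma with local coefficients. Your additional verifications (Leibniz rule, compatibility of the convolution filtration $F$ under products, Koszul signs) simply make explicit what the paper records in its commutative square and declares compatible with the symmetry.
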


\begin{proof}
The cited theory of Deligne-Zucker already shows that this is a mixed Hodge complex. For two polarizable VHS $\mathbb{V}_1,\mathbb{V}_2$ there is a diagram
\begin{equation*}
\xymatrix{
\Omega^\bullet_X(\mathbb{V}_1) \otimes \Omega^\bullet_X(\mathbb{V}_2) \ar[r] & \Omega^\bullet_X(
\mathbb{V}_1 \otimes_{\kk} \mathbb{V}_2) \\
(\mathbb{V}_1 \otimes_{\kk} \CC) \otimes (\mathbb{V}_2 \otimes_{\kk} \CC) \ar[r] \ar[u]^{\approx} &
(\mathbb{V}_1 \otimes \mathbb{V}_2) \otimes_{\kk} \CC \ar[u]_{\approx} }
\end{equation*}
where the two vertical morphisms are quasi-isomorphisms (this is simply the holomorphic Poincaré lemma extended to local systems) and the top morphism is induced by the product in $\Omega^\bullet_X$. This diagram is compatible with the symmetry exchanging $\mathbb{V}_1$ and $\mathbb{V}_2$. Whence the lax monoidal symmetric condition.
\end{proof}

As explained in \S~\ref{subsection:application-Hodge} this the is MHC of sheaves, to which we apply $R_\TW\Gamma(X,-)$ to get a MHC that is lax monoidal symmetric in $\mathbb{V}$.

Of course this is just re-writing \cite[\S~10]{Lefevre2} using only $R_\TW\Gamma$ instead of, over $\CC$, the more classical resolution by $\mathcal{C}^\infty$ differential forms with values in $\mathbb{V}$.

Also, this is easily extended to graded-polarizable VMHS. If $\mathbb{W}_\bullet$ is the filtration of such a VMHS $\mathbb{V}$ by sub-local systems such that the quotients are polarizable VHS, then the filtration $W_\bullet$ on $H^i(X,\mathbb{V})$ is such that graded quotients are the Deligne-Zucker Hodge structures $H^i(X, \Gr^W_k(\mathbb{V}))$ of weight $k+i$. Hence the weight filtration on the MHC of sheaves is defined by
\begin{equation}
W_k(\Omega^\bullet_X(\mathbb{V})) := \Omega^\bullet_X(\mathbb{W}_k)
\end{equation}
and then:

\begin{corollary}
If $\mathbb{V}$ is a graded-polarizable VMHS then
\begin{equation*}
\bigl((\mathbb{V},W),\  (\Omega^\bullet_X(\mathbb{V}), W, F) \bigr)
\end{equation*}
is a MHC of sheaves, lax monoidal symmetric in $\mathbb{V}$.
\end{corollary}

\subsection{VHS extendable to a compactification}
\label{section:VHS-extendable}

Now we turn to the situation where the VHS $\mathbb{V}$ is defined over $\overline{X}$ but we study it over the open variety $X$, which embeds as $j:X\hookrightarrow \overline{X}$ with complement a normal crossing divisor $D$. After a theorem of Griffiths, this is the same as requiring that $\mathbb{V}$ is a VHS over $X$ with trivial monodromy around $D$: then the local system underlying $\mathbb{V}$ extends across $D$ and Griffiths' theorem \cite[Theorem~9.5]{GriffithsIII} shows that the holomorphic bundles $\mathcal{F}^\bullet$ also extend so as to define a VHS on $\overline{X}$. In this case there is a MHS on each $H^i(X, \mathbb{V})$ with lowest weight-piece the previous Deligne-Zucker Hodge structure $H^i(\overline{X}, \mathbb{V})$ of weight $w+i$.

Over $X$ we can introduce the logarithmic complex $\Omega^\bullet_{\overline{X}}(\log D)$ which carries two filtrations $W,F$ as usual and we can form the logarithmic complex with local coefficients $\Omega^\bullet_{\overline{X}}(\log D,\mathbb{V}):=\Omega^\bullet_{\overline{X}}(\log D)\otimes\mathbb{V}$. It again carries two filtrations $F,W$ and since $\mathbb{V}$ is pure $W$ is simply the shift by $w$ of the weight filtration of $\Omega^\bullet_{\overline{X}}(\log D)$. This will be the complex part of the MHC of sheaves.

It is easy to see that we again have a diagram
\begin{equation*}
\xymatrix{
\Omega^\bullet_{\overline{X}}(\log D,\mathbb{V}_1) \otimes \Omega^\bullet_{\overline{X}}(\log D,\mathbb{V}_2) \ar[r] & \Omega^\bullet_{\overline{X}}(\log D, \mathbb{V}_1 \otimes_{\kk} \mathbb{V}_2) \\
(\mathbb{V}_1 \otimes\CC) \otimes (\mathbb{V}_2 \otimes\CC) \ar[r] \ar[u]^{\approx} &
(\mathbb{V}_1 \otimes_{\kk} \mathbb{V}_2) \otimes \CC \ar[u]_{\approx} }
\end{equation*}
where the vertical maps are quasi-isomorphisms and the top horizontal one uses the product of differential forms.

For the component over $\kk$ we take $R_\TW j_* \mathbb{V}$.
The filtration is obtained from the canonical filtration $\tau$. Recall that for any complex $(K^\bullet,d)$ the canonical filtration $\tau$ is defined by $\tau_k K^n=0$ if $k<n$, $\tau_k K^n=\Ker(d)$ if $k=n$ and $\tau_k K^n=K^n$ for $k>n$. Any quasi-isomorphism is then automatically a filtered quasi-isomorphism with respect to $\tau$. On $R_\TW j_* \mathbb{V}$ we take for $W$ the shifted filtration $\tau[w]$, with $\tau[w]_k:=\tau_{w+k}$.

\begin{proposition}[{Compare \cite[Prop.~8.4]{Navarro}}]
\label{proposition:canonical-chain-quasi-isomorphisms}
There is a canonical chain of filtered quasi-isomorphisms
\begin{equation*}
(R_\TW j_* \mathbb{V}, W) \otimes \CC \stackrel{\approx}{\longleftrightarrow} (\Omega^\bullet_{\overline{X}}(\log D,\mathbb{V}), W) .
\end{equation*}
\end{proposition}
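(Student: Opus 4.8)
The plan is to deduce this from the constant-coefficient statement of Navarro Aznar (Prop.~8.4) by tensoring with the extension of $\mathbb{V}$ across $D$, carefully tracking how the canonical filtration $\tau$ on the $\kk$-side and the weight filtration $W$ on the logarithmic side interact with this tensoring. By Griffiths' theorem the triviality of the monodromy means precisely that $\mathbb{V}$ is the restriction $j^*\overline{\mathbb{V}}$ of a local system $\overline{\mathbb{V}}$ on $\overline{X}$, and that the Hodge bundle extends to a holomorphic bundle $\overline{\mathcal{V}}$ on $\overline{X}$ carrying a flat connection $\overline{\nabla}$ whose residues along $D$ vanish. This is Deligne's canonical extension in the trivial-residue case, and it is exactly the bundle entering the logarithmic complex $\Omega^\bullet_{\overline{X}}(\log D,\mathbb{V})=\Omega^\bullet_{\overline{X}}(\log D)\otimes\overline{\mathcal{V}}$, with differential induced by $\overline{\nabla}$.

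First, on the $\kk$-side, I would establish the projection formula $R_\TW j_*\mathbb{V}\cong\overline{\mathbb{V}}\otimes R_\TW j_*\kk_X$. The lax symmetric monoidal structure of $R_\TW j_*$, together with the unit $\overline{\mathbb{V}}\to R_\TW j_* j^*\overline{\mathbb{V}}$, furnishes a canonical map $\overline{\mathbb{V}}\otimes R_\TW j_*\kk_X\to R_\TW j_*\mathbb{V}$, and this is a quasi-isomorphism since the claim is local on $\overline{X}$ and $\overline{\mathbb{V}}$ trivializes locally, reducing to a finite direct sum of copies of the constant case. Because $\overline{\mathbb{V}}$ is a local system placed in degree zero, tensoring by it is exact and commutes with the canonical filtration, so $\tau$ on $R_\TW j_*\mathbb{V}$ is identified with $\overline{\mathbb{V}}\otimes\tau$ on $R_\TW j_*\kk_X$; the weight-$w$ purity of $\mathbb{V}$ is exactly what makes the prescribed shift $W=\tau[w]$ the correct normalization.

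Next, on the $\CC$-side, Navarro's Prop.~8.4 supplies a canonical chain of filtered quasi-isomorphisms $(R_\TW j_*\kk_X,\tau)\otimes\CC \approx (\Omega^\bullet_{\overline{X}}(\log D),W)$. I would tensor this entire chain with the locally free sheaf $\overline{\mathcal{V}}$, using the holomorphic Poincaré lemma for $(\overline{\mathcal{V}},\overline{\nabla})$ to identify $\overline{\mathbb{V}}\otimes\CC$ with its de Rham resolution by $\overline{\mathcal{V}}$; the shift of the logarithmic weight filtration by $w$ again records the weight $w$ of $\mathbb{V}$. Composing the projection-formula identification with Navarro's tensored chain produces the desired canonical chain relating $(R_\TW j_*\mathbb{V},W)\otimes\CC$ and $(\Omega^\bullet_{\overline{X}}(\log D,\mathbb{V}),W)$, canonicity being inherited from Navarro's and from functoriality of the tensoring.

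The main obstacle is to verify that tensoring Navarro's filtered quasi-isomorphisms by $\overline{\mathcal{V}}$ preserves the \emph{filtered}, and not merely the plain, quasi-isomorphism property, i.e.\ that the maps remain isomorphisms on each $\Gr^W_k$. Since $\overline{\mathcal{V}}$ is locally free the functor $-\otimes\overline{\mathcal{V}}$ is exact and commutes with $\Gr^W$, so this reduces to Navarro's identification on the associated graded. The genuinely non-formal point to check is that the resulting $\Gr^W_k$ are still pure Hodge complexes of weight $k$: the Poincaré-residue description realizes them as restrictions of $\overline{\mathbb{V}}$ to the strata $D_I$ of the divisor, Tate-twisted by the number of branches, and since $\overline{\mathbb{V}}$ is a pure VHS of weight $w$ its restriction to each stratum is again pure of weight $w$, so the tensor product acquires exactly the weight $k=w+|I|$ demanded by the pure Hodge complex axioms. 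Tracking this weight bookkeeping through the residue maps is the only step requiring genuine Hodge-theoretic input; everything else follows from exactness of tensoring by a locally free sheaf and the constant-coefficient result.
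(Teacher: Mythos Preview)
Your argument is correct but takes a different route from the paper. The paper builds the chain directly with coefficients in $\mathbb{V}$ throughout: it applies $R_\TW j_*$ to the holomorphic Poincar\'e lemma $\mathbb{V}\otimes\CC\to\Omega^\bullet_X(\mathbb{V})\to\TW^*\Omega^\bullet_X(\mathbb{V})$, meets this in the middle with $\Omega^\bullet_{\overline{X}}(\log D,\mathbb{V})\to j_*(\TW^*\Omega^\bullet_X(\mathbb{V}))\to R_\TW j_*(\TW^*\Omega^\bullet_X(\mathbb{V}))$, notes that all these are automatically $\tau$-filtered quasi-isomorphisms, and finally tensors the classical comparison $(\Omega^\bullet_{\overline{X}}(\log D),\tau)\to(\Omega^\bullet_{\overline{X}}(\log D),W)$ with $\mathbb{V}$ to pass from $\tau[w]$ to $W$. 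Your approach instead invokes a projection formula $R_\TW j_*\mathbb{V}\simeq\overline{\mathbb{V}}\otimes R_\TW j_*\kk_X$ and then tensors Navarro's constant-coefficient chain with the extended local system; this is slicker here and makes the reduction to Navarro literal rather than parallel. (It is cleanest to phrase the tensoring as $-\otimes_\CC\overline{\mathbb{V}}_\CC$ with the local system in degree zero, so that exactness and compatibility with differentials and with $\tau$ are immediate.) The trade-off is that your reduction leans entirely on $\mathbb{V}$ extending as a \emph{local system} across $D$, whereas the paper's hands-on chain is the template reused verbatim in the next section, where the monodromy is only unipotent and no such projection formula is available. One minor remark: your final paragraph, checking that the $\Gr^W_k$ are pure Hodge complexes via residues, goes beyond what the proposition asserts and is really the content of the theorem that follows; for the proposition itself your exactness observation suffices.
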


\begin{proof}
First, by the holomorphic Poincaré lemma there is a quasi-isomorphism of sheaves over $X$
\begin{equation*}
\mathbb{V}\otimes\CC \stackrel{\approx}{\longrightarrow} \Omega^\bullet_X(\mathbb{V})
\end{equation*}
that we compose with the Thom-Whitney resolution
\begin{equation*}
\Omega^\bullet_X(\mathbb{V}) \stackrel{\approx}{\longrightarrow} \TW^* \Omega^\bullet_X(\mathbb{V})
\end{equation*}
to get the chain
\begin{equation}
R_\TW j_*\mathbb{V}\otimes\CC 
 \stackrel{\approx}{\longrightarrow} R_\TW j_*\Omega^\bullet_X(\mathbb{V}) \stackrel{\approx}{\longrightarrow}
R_\TW j_*(\TW^* \Omega^\bullet_X(\mathbb{V})).
\end{equation}
These are automatically filtered quasi-isomorphisms for $\tau$ as well as for $\tau[w]$.

On the other hand there is a canonical morphism over $\overline{X}$
\begin{equation*}
\Omega^\bullet_{\overline{X}}(\log D,\mathbb{V}) \longrightarrow j_*\Omega^\bullet_X(\mathbb{V})
\end{equation*}
which, composed with $j_*(\Omega^\bullet_X(\mathbb{V}))\rightarrow j_*(\TW^* \Omega^\bullet_X(\mathbb{V}))$, induces a quasi-isomorphism
\begin{equation*}
\Omega^\bullet_{\overline{X}}(\log D,\mathbb{V}) \stackrel{\approx}{\longrightarrow} j_*(\TW^*\Omega^\bullet_X(\mathbb{V})) .
\end{equation*}
Indeed, $\Omega^\bullet_X(\mathbb{V})\rightarrow\TW^*\Omega^\bullet_X(\mathbb{V})$ is an acyclic resolution so both terms compute
$Rj_* \mathbb{V}$.
Then we use the quasi-isomorphism
\begin{equation*}
j_*(\TW^*\Omega^\bullet_X(\mathbb{V})) \stackrel{\approx}{\longrightarrow} R_\TW j_*(\TW^*\Omega^\bullet_X(\mathbb{V})) .
\end{equation*}
To sum up this gives
\begin{equation}
\Omega^\bullet_{\overline{X}}(\log D,\mathbb{V})
\stackrel{\approx}{\longrightarrow} j_*(\TW^*\Omega^\bullet_X(\mathbb{V}))
 \stackrel{\approx}{\longrightarrow} R_\TW j_*(\TW^*\Omega^\bullet_X(\mathbb{V}))
\end{equation}
where again these quasi-isomorphisms are filtered quasi-isomorphisms for $\tau$ and so also for~$\tau[w]$.

Finally the classical filtered quasi-isomorphism
\begin{equation*}
(\Omega^\bullet_{\overline{X}}(\log D), \tau) \stackrel{\approx}{\longrightarrow}
(\Omega^\bullet_{\overline{X}}(\log D), W)
\end{equation*}
also induces with local coefficients
\begin{equation*}
(\Omega^\bullet_{\overline{X}}(\log D), \tau)\otimes\mathbb{V} \stackrel{\approx}{\longrightarrow}
(\Omega^\bullet_{\overline{X}}(\log D), W)\otimes\mathbb{V} .
\end{equation*}
By our choice of $W=\tau[w]$ over $\kk$ this gives
\begin{equation}
(\Omega^\bullet_{\overline{X}}(\log D,\mathbb{V}), \tau[w]) \stackrel{\approx}{\longrightarrow}
(\Omega^\bullet_{\overline{X}}(\log D, \mathbb{V}), W) .
\qedhere
\end{equation}
\end{proof}

Then it's just about applying the classical theory of residues of forms with logarithmic poles, taking into account local coefficients in a VHS. We decompose the divisor $D$ (assuming here \emph{simple} normal crossings) into its components $D=\bigcup_I D_i$ ($I$ a finite set), for each set $J\subset I$ we write $D_J:=\cap_{i\in J} D_i$ and we let $\widetilde{D}^k := \bigcup_{|J|=k} D_J$ with its canonical inclusion $i_k$ into $\overline{X}$. For $k=0$ this is $\overline{X}$. The classical residue is a morphism
\begin{equation}
\Res:W_k\Omega_{\overline{X}}^\bullet(\log D) \longrightarrow (i_k)_* \Omega^{\bullet-k}_{\widetilde{D}^k}
\end{equation}
which induces a quasi-isomorphism
\begin{equation}
\Gr_k^W \Omega_{\overline{X}}^\bullet(\log D) \stackrel{\approx}{\longrightarrow} (i_k)_* \Omega^{\bullet-k}_{\widetilde{D}^k} .
\end{equation}
On the right-hand side the terms $\widetilde{D}^k$ are compact Kähler manifolds with pure Hodge structures on cohomology. Shifting their weights, this quasi-isomorphism puts a pure Hodge structure of weight $k+n$ on $H^n(\Gr_k^W \Omega_{\overline{X}}^\bullet(\log D))$ and allows to check all the axioms of mixed Hodge complex.

We easily conclude for our non-degenerating VHS by defining a residue morphism with local coefficients
\begin{equation}
\label{equation:residue-local-coefficients}
\Res\otimes\id_\mathbb{V}:W_{k+w}\Omega_{\overline{X}}^\bullet(\log D, \mathbb{V}) \longrightarrow (i_k)_* \Omega^{\bullet-k}_{\widetilde{D}^k}(\mathbb{V}) .
\end{equation}

\begin{theorem}
For a polarizable VHS $\mathbb{V}$ defined over $X$ and extending as VHS to the compactification $j:X\hookrightarrow \overline{X}$, the data
\begin{equation*}
\bigl( (R_\TW j_*\mathbb{V}, W),\ (\Omega^\bullet_{\overline{X}}(\log D,\mathbb{V}), W, F) \bigr)
\end{equation*}
is a MHC of sheaves for $H^\bullet(X, \mathbb{V})$ that is lax monoidal symmetric in $\mathbb{V}$.
\end{theorem}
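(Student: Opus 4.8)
The filtered quasi-isomorphism $(R_\TW j_*\mathbb{V}, W)\otimes\CC \approx (\Omega^\bullet_{\overline{X}}(\log D,\mathbb{V}), W)$ is already furnished by the Proposition above, so the bifiltered data and the comparison datum are in place, and the plan is to verify the one remaining axiom: that for every $m$ the graded piece $\Gr^W_m$ of the whole datum is a pure Hodge complex of sheaves of weight $m$ over $\overline{X}$. I would reduce this, via the residue maps, to the compact Kähler pure case already established, which is precisely the role of the residue morphism~\eqref{equation:residue-local-coefficients}.

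First, on the $\CC$-component, since $\mathbb{V}$ is pure of weight $w$ the filtration $W$ is the shift by $w$ of the weight filtration of $\Omega^\bullet_{\overline{X}}(\log D)$, so $\Gr^W_{k+w}\Omega^\bullet_{\overline{X}}(\log D,\mathbb{V}) = \Gr^W_k\Omega^\bullet_{\overline{X}}(\log D)\otimes\mathbb{V}$. The residue~\eqref{equation:residue-local-coefficients} then gives a quasi-isomorphism
\begin{equation}
\Gr^W_{k+w}\Omega^\bullet_{\overline{X}}(\log D,\mathbb{V})
\stackrel{\approx}{\longrightarrow}
(i_k)_*\Omega^\bullet_{\widetilde{D}^k}(\mathbb{V})[-k](-k),
\end{equation}
where $(-k)$ is the Tate twist built into the normalization of the residue and $[-k]$ the cohomological shift. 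On the $\kk$-component, where $W=\tau[w]$, I would use $\Gr^W_{k+w}(R_\TW j_*\mathbb{V}) = \Gr^\tau_k(R_\TW j_*\mathbb{V}) = \mathcal{H}^k(R_\TW j_*\mathbb{V})[-k]$ together with the fact that, $\mathbb{V}$ having trivial local monodromy around $D$, the Gysin description of $R^k j_*\mathbb{V}$ identifies $\mathcal{H}^k(R_\TW j_*\mathbb{V})$ with $(i_k)_*(\mathbb{V}|_{\widetilde{D}^k})(-k)$; after a soft Thom-Whitney resolution this is to be matched with the $\CC$-side under the Proposition's comparison.

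Because $\mathbb{V}$ extends as a VHS to $\overline{X}$, its restriction $\mathbb{V}|_{\widetilde{D}^k}$ is a pure polarizable VHS of weight $w$ on the compact Kähler manifold $\widetilde{D}^k$, so the pure case theorem applies: $(\mathbb{V}|_{\widetilde{D}^k},(\Omega^\bullet_{\widetilde{D}^k}(\mathbb{V}),F))$ is a pure Hodge complex of sheaves of weight $w$ on $\widetilde{D}^k$, whence $H^j(\widetilde{D}^k,\mathbb{V})$ is pure of weight $w+j$. Transporting through $(i_k)_*$, the shift $[-k]$ and the twist $(-k)$, the degree-$i$ cohomology of $\Gr^W_{k+w}$ becomes $H^{i-k}(\widetilde{D}^k,\mathbb{V})(-k)$, pure of weight $(w+i-k)+2k = i+(k+w)$, which is exactly the weight required of a pure Hodge complex of sheaves of weight $k+w$. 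This would establish the mixed Hodge complex axioms, and the complex visibly computes $H^\bullet(X,\mathbb{V})$ since $R_\TW j_*\mathbb{V}$ does.

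For the lax symmetric monoidal structure I would use the product of logarithmic forms, as in the displayed diagram for $\Omega^\bullet_{\overline{X}}(\log D,\mathbb{V}_1)\otimes\Omega^\bullet_{\overline{X}}(\log D,\mathbb{V}_2)$, together with the lax symmetric monoidality of $R_\TW j_*$; one then checks that the residue maps are compatible with these products so that the comparison on $\Gr^W$ respects the monoidal structure. The hard part will be the $\kk$-side matching: producing the topological residue/Gysin isomorphism for $\Gr^\tau_k(R_\TW j_*\mathbb{V})$ at the level of Thom-Whitney sheaves and verifying that, after $\otimes\CC$, it is intertwined with the holomorphic residue~\eqref{equation:residue-local-coefficients}, keeping track of the Tate twists so that the reduction to the compact Kähler pure case is clean.
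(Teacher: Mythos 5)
Your proposal is correct and follows essentially the same route as the paper: the comparison datum comes from the preceding Proposition, and the mixed Hodge complex axioms are verified by using the residue morphism~\eqref{equation:residue-local-coefficients} to identify $\Gr^W_{k+w}$ with (shifted, twisted) forms on the compact Kähler strata $\widetilde{D}^k$ with coefficients in the restricted weight-$w$ VHS, reducing to the compact pure case, with lax symmetry supplied by the product of logarithmic forms and the monoidality of $R_\TW j_*$. Your explicit weight bookkeeping $(w+i-k)+2k=i+(k+w)$ and the $\kk$-side Gysin identification of $\Gr^\tau_k(R_\TW j_*\mathbb{V})$ are details the paper leaves implicit (citing the classical residue theory), so your write-up is a faithful, slightly more detailed version of the same argument.
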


\begin{proof}
Via the map~\eqref{equation:residue-local-coefficients},
the terms $H^n(\Gr_{k+w}^W \Omega_{\overline{X}}^\bullet(\log D))$ carry pure Hodge structures of weight $n+k+w$, because on the right-hand side we get cohomology groups of compact Kähler manifolds with coefficients in the polarizable VHS $\mathbb{V}$ of weight~$w$. Thus this forms a MHC, and in Proposition~\ref{proposition:canonical-chain-quasi-isomorphisms} every term is lax monoidal symmetric in~$\mathbb{V}$.
\end{proof}

In the notation of \S~\ref{subsection:application-Hodge} the above is $\operatorname{\mathcal{MHC}}(X, \overline{X}, \mathbb{V})$, and then we will apply $R_\TW\Gamma(\overline{X},-)$ to it.

We see from this construction that each $H^i(X,\mathbb{V})$ has weights between $i+w$ and $2i+w$, with $\Gr^W_{i+w}H^i(X,\mathbb{V})=H^i(\overline{X},\mathbb{V})$.

\section{Degenerating VMHS on curves}
\label{section:curves}
We now come to the study of degenerating VHS and even VMHS, which is more complicated. So we apply it only for curves in this section.

\subsection{Preliminaries on extensions of local systems}
In this section we work with any open manifold $X\subset\overline{X}$ compactified by a normal crossing divisor~$D$.

Let $\mathbb{V}$ be a local system over $X$ of vector spaces over $\CC$. Deligne has shown (\cite[\S~II.5]{DeligneRegSing}, see also \cite[\S~5]{Hotta}) that its associated flat vector bundle $(\mathcal{V},\nabla)$ extends as a meromorphic bundle $\mathcal{M}$ to $\overline{X}$. For each half-open interval $I$ of length $1$, there is a unique vector bundle with connection $(\overline{\mathcal{V}}^I,\nabla)$ over $\overline{X}$, seen as a lattice inside $\mathcal{M}$, extending $(\mathcal{V},\nabla)$ such that $\nabla$ has logarithmic poles on $D$ and such that the residue of $\nabla$ along any (local) component $D_i$ of $D$
\begin{equation*}
\Res_{D_i}(\nabla)\in\Omega_{D_i}^1(\End(\overline{\mathcal{V}}))
\end{equation*}
has eigenvalues of real part contained in~$I$.

For two such flat bundles $(\mathcal{V}_1, \nabla_1)$, $(\mathcal{V}_2, \nabla_2)$ with associated meromorphic extensions $\mathcal{M}_1, \mathcal{M}_2$ the meromorphic extension of $\mathcal{V}_1 \otimes \mathcal{V}_2$, equipped with the connection
\begin{equation}
\nabla:={\nabla_1} \otimes {\id_2} + {\id_1} \otimes {\nabla_2},
\end{equation}
is exactly $\mathcal{M}_1 \otimes \mathcal{M}_2$. However the construction of $\overline{\mathcal{V}}^I$ is not compatible with this tensor product.

Hence we will make the assumption that our local systems have \emph{unipotent} monodromy around each component $D_i$. In this case we can take for $(\overline{\mathcal{V}}, \nabla)$ the unique extension for which the residues are nilpotent (i.e.\ have $0$ as only eigenvalue) and call it the \emph{canonical extension} of $(\mathcal{V},\nabla)$. Note that the residue of a tensor product $(\mathcal{V},\nabla)=(\mathcal{V}_1,\nabla_1)\otimes (\mathcal{V}_2,\nabla_2)$ is
\begin{equation}
\Res_{D_i}(\nabla)={\Res_{D_i}(\nabla_1)}\otimes{\id_2} + {\id_1} \otimes {\Res_{D_i}(\nabla_2)}
\end{equation}
which is again nilpotent if both residues are.

This will be a good tool to define our lax symmetric monoidal MHC:

\begin{proposition}
\label{theorem:chain-qis}
There are two chains of quasi-isomorphisms
\begin{equation}
\label{equation:chain-qis-1}
R_\TW j_*\mathbb{V}
\stackrel{\approx}{\longrightarrow} R_\TW j_*\Omega^\bullet_X(\mathbb{V}) \stackrel{\approx}{\longrightarrow}
R_\TW j_*(\TW^* \Omega^\bullet_X(\mathbb{V}))
\end{equation}
and
\begin{equation}
\label{equation:chain-qis-2}
\Omega^\bullet_{\overline{X}}(\log D,\overline{\mathcal{V}})
\stackrel{\approx}{\longrightarrow} j_*(\TW^*\Omega^\bullet_X(\mathbb{V}))
\stackrel{\approx}{\longrightarrow} R_\TW j_*(\TW^*\Omega^\bullet_X(\mathbb{V}))
\end{equation}
that are lax symmetric monoidal for local systems with unipotent monodromy around~$D$.
\end{proposition}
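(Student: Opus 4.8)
The plan is to establish the two chains separately: \eqref{equation:chain-qis-1} is formal, while \eqref{equation:chain-qis-2} rests on Deligne's comparison theorem for the canonical extension. For~\eqref{equation:chain-qis-1}, everything happens over $X$ before applying $R_\TW j_*$. The holomorphic Poincaré lemma with local coefficients gives a quasi-isomorphism $\mathbb{V}\xrightarrow{\approx}\Omega^\bullet_X(\mathbb{V})$, and property~(4) gives the Thom-Whitney resolution $\Omega^\bullet_X(\mathbb{V})\xrightarrow{\approx}\TW^*\Omega^\bullet_X(\mathbb{V})$. Since $R_\TW j_*$ preserves quasi-isomorphisms by property~(5), applying it to these two maps yields~\eqref{equation:chain-qis-1} directly, with no input about the behaviour of $\mathbb{V}$ near $D$.

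For~\eqref{equation:chain-qis-2}, I would first write down the two morphisms and then check that each is a quasi-isomorphism. The second map $j_*(\TW^*\Omega^\bullet_X(\mathbb{V}))\to R_\TW j_*(\TW^*\Omega^\bullet_X(\mathbb{V}))$ is handled exactly as in the extendable case: $\TW^*\Omega^\bullet_X(\mathbb{V})$ is soft by property~(4), so its ordinary direct image already computes its Thom-Whitney (hence derived) direct image, making this comparison a quasi-isomorphism. The first map is the restriction morphism $\Omega^\bullet_{\overline{X}}(\log D,\overline{\mathcal{V}})\to j_*\Omega^\bullet_X(\mathbb{V})$---valid since $\overline{\mathcal{V}}|_X=\mathcal{V}$ and the logarithmic complex restricts to the holomorphic de Rham complex on $X$---composed with $j_*$ of the Thom-Whitney resolution map.

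The heart of the argument, and the main obstacle, is to prove that this composite $\Omega^\bullet_{\overline{X}}(\log D,\overline{\mathcal{V}})\to j_*(\TW^*\Omega^\bullet_X(\mathbb{V}))$ is a quasi-isomorphism of complexes of sheaves on $\overline{X}$; over $X$ it is clear, so the content is concentrated along $D$. Here I would invoke the classical theorem of Deligne (\cite[\S~II.5]{DeligneRegSing}, \cite[\S~5]{Hotta}) recalled above: since $\mathbb{V}$ has unipotent monodromy around each $D_i$ and $\overline{\mathcal{V}}$ is its canonical nilpotent-residue extension, $\Omega^\bullet_{\overline{X}}(\log D,\overline{\mathcal{V}})$ computes $Rj_*\mathbb{V}$, the comparison being realized by exactly this restriction map. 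On the other side, combining the softness of $\TW^*\Omega^\bullet_X(\mathbb{V})$ (so that $j_*$ of it agrees with $R_\TW j_*$, which by property~(2) agrees with $Rj_*$) with the Poincaré-lemma quasi-isomorphism $\Omega^\bullet_X(\mathbb{V})\approx\mathbb{V}$ shows that $j_*(\TW^*\Omega^\bullet_X(\mathbb{V}))$ also computes $Rj_*\mathbb{V}$. The one genuine point to verify is that our explicit composite is compatible with these two identifications of $Rj_*\mathbb{V}$, which follows from the naturality of Deligne's comparison map, itself built by the same restrict-then-resolve procedure. The novelty over the extendable case is only that one must use the canonical extension $\overline{\mathcal{V}}$ in place of the local system, the unipotence hypothesis ensuring that this is the correct lattice.
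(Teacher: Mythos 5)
Your proposal is correct and follows essentially the same route as the paper: the paper likewise observes that the chains are the same as in the extendable case except for the map $\Omega^\bullet_{\overline{X}}(\log D,\overline{\mathcal{V}})\rightarrow j_*(\TW^*\Omega^\bullet_X(\mathbb{V}))$, identifies $j_*(\TW^*\Omega^\bullet_X(\mathbb{V}))$ with $Rj_*\mathbb{V}$ via softness and the Poincaré lemma, and invokes the classical comparison theorem for the canonical extension (citing \cite[Theorem~5.2.24]{Hotta}). Your extra remark on the compatibility of the composite with the two identifications of $Rj_*\mathbb{V}$ is a slightly more careful articulation of exactly the point the paper disposes of as ``classical.''
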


\begin{proof}
Same as Proposition~\ref{proposition:canonical-chain-quasi-isomorphisms}; the only difference is that it is now required to show that the map
\begin{equation*}
\Omega^\bullet_{\overline{X}}(\log D,\overline{\mathcal{V}}) \longrightarrow j_*(\TW^*\Omega^\bullet_X(\mathbb{V}))
\end{equation*}
is a quasi-isomorphism. Both terms compute $Rj_* \mathbb{V}$ (\cite[Theorem~5.2.24]{Hotta}).
\end{proof}

\subsection{Degenerations}
Now we fix a smooth proper curve $\overline{X}$ (automatically Kähler), with $D$ a finite set of points and $X=\overline{X}\setminus D$. It is known that VMHS of geometric origin automatically have quasi-unipotent monodromy around $D$, so the pull-back to a finite cover has unipotent monodromy. In thi section we will restrict ourselves to local unipotent monodromy.

In his study of degenerations of polarizable VHS, Schmid \cite{Schmid} has shown that the holomorphic bundles $\mathcal{F}^\bullet$ of a polarizable VHS $\mathbb{V}$ of weight $w$ on $X$ can be extended to $\overline{X}$ as holomorphic sub-vector bundles of $\overline{\mathcal{V}}$. Around $x\in D$ the residue operator is a nilpotent operator $N(x)$ of the fiber $\overline{\mathcal{V}}_x$ to which is associated a monodromy filtration:

\begin{definition}[{\cite[2.1]{SteenbrinkZucker}}]
The \emph{monodromy filtration} associated to a finite-dimensional vector space $V$ and a nilpotent endomorphism $N$, centered at $w\in\ZZ$, is the unique increasing filtration $M_\bullet$ such that for all $i\in\ZZ$:
\begin{enumerate}
\item $N(M_i)\subset M_{i-2}$,
\item $N^i$ induces an isomorphism $\Gr^{w+i}_M V \stackrel{\simeq}{\longrightarrow} \Gr^{w-i}_M V$.
\end{enumerate}
\end{definition}

Schmid then shows that the this monodromy filtration centered at $w$ together with $\overline{\mathcal{F}}^\bullet_x$ gives a MHS on~$\overline{\mathcal{V}}_x$.

In this situation Zucker \cite[\S~13]{Zucker} constructed, first a pure HS of weight $i+w$ on each $H^i(\overline{X}, j_* \mathbb{V})$, and then a MHS on $H^i(X, \mathbb{V})$
whose part of lowest weight is exactly $H^i(\overline{X}, j_*\mathbb{V})$. The Hodge filtration is defined on $\Omega^\bullet_{\overline{X}}(\log D, \overline{\mathcal{V}})$ as usual using the bundles $\overline{\mathcal{F}}^\bullet \subset \overline{\mathcal{V}}$. The description of the weight filtration uses the previous monodromy filtration; in particular the upper bound for the weights of $H^i(X, \mathbb{V})$ depends on the order of nilpotency of $N(x)$. The MHS constructed by Zucker comes indeed from a MHC and is defined over the field $\kk\subset\RR$ if $\mathbb{V}$ is.

Now when $\mathbb{V}$ is a graded-polarizable VMHS, then the local systems $\mathbb{W}_\bullet\subset\mathbb{V}$ also extends as vector bundles to $\overline{\mathcal{W}}_\bullet \subset\overline{\mathcal{V}}$ and the previous theory applies to the weight-graded local systems that are by definition polarizable VHS; however this is not enough. The problem of getting a similar good theory of degenerations is studied in detail in the paper of Steenbrink-Zucker and there is introduced the notion of \emph{admissible} VMHS (\cite[3.13]{SteenbrinkZucker}). For this are required graded-polarizability and local quasi-unipotent monodromy, that the filtration $\mathcal{F}^\bullet\subset\mathcal{V}$ also extend to $\overline{\mathcal{F}}^\bullet \subset\overline{\mathcal{V}}$, and that the nilpotent residue $N(x)$ around $x$ behaves well with respect to the weight filtration of each fiber $\overline{\mathcal{V}}_x$, i.e.\ there exists the relative monodromy filtration:

\begin{definition}[{\cite[2.5]{SteenbrinkZucker}}]
The \emph{relative monodromy filtration} (it is unique when it exists) associated to a finite-dimensional vector space $V$ with an increasing filtration $W_\bullet$ and a nilpotent endomorphism $N$ respecting $W_\bullet$ is a filtration $M_\bullet$ of $V$ such that:
\begin{enumerate}
\item For all $i\in\ZZ$, $N(M_i)\subset M_{i-2}$,
\item $M$ induces on each $\Gr^k_W V$ the monodromy filtration of the endomorphism induced by $N$, centered at $k$.
\end{enumerate}
\end{definition}

Polarizable VHS are admissible because of the results of Schmid; the relative monodromy filtration is simply the monodromy filtration of $N$. Also, VMHS of geometric origin are shown to be admissible in their paper (\S~5). For admissible VMHS, they are then able to construct a MHS on $H^i(X, \mathbb{V})$, again coming from a MHC and defined over $\kk\subset\RR$ if $\mathbb{V}$ is.

Since we work on tensor product, we will need to know:

\begin{proposition}[{\cite[Appendix]{SteenbrinkZucker}}]
For two finite-dimensional vector spaces $V_1,V_2$ with nilpotent endomorphisms $N_1,N_2$ and filtrations $W^1_\bullet, W^2_\bullet$, having relative monodromy filtrations $M^1_\bullet, M^2_\bullet$, then the monodromy filtration of $V_1 \otimes V_2$ with respect to the endomorphism $N={N_1}\otimes{\id_{V_2}} + {\id_{V_1}}\otimes{N_2}$ and the filtration $W^1 \otimes W^2$ exists and is $M^1 \otimes M^2$. Consequently, the category of admissible VMHS on a curve is closed under tensor product.
\end{proposition}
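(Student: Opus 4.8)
The plan is to prove the linear-algebra statement first and then deduce the closure of admissible VMHS as a fibrewise consequence. Since the relative monodromy filtration is unique when it exists (as recalled above), it suffices to exhibit a filtration satisfying its two defining axioms; I will take the candidate $M:=M^1\otimes M^2$, defined by $M_i=\sum_{a+b=i}M^1_a\otimes M^2_b$, and verify that it is the relative monodromy filtration of $(V_1\otimes V_2,W,N)$ with $W=W^1\otimes W^2$ and $N=N_1\otimes\id_{V_2}+\id_{V_1}\otimes N_2$. Uniqueness then forces equality, and the existence assertion comes for free.

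The essential input is the \emph{absolute} case, where the filtrations $W^i$ are trivial (concentrated in a single step) and $M^i$ is the ordinary monodromy filtration of $N_i$ centered at some $w_i$. Here I would invoke Jacobson--Morozov: choose $\mathfrak{sl}_2$-triples $(N_i,H_i,Y_i)$ on $V_i$ so that $M^i$ is the filtration by $H_i$-eigenvalues shifted by $w_i$. Then $(N,\,H_1\otimes\id+\id\otimes H_2,\,Y_1\otimes\id+\id\otimes Y_2)$ is again an $\mathfrak{sl}_2$-triple on $V_1\otimes V_2$, its weight filtration is the convolution of the two weight filtrations, and a direct check shows this convolution is exactly $M^1\otimes M^2$ centered at $w_1+w_2$. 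The two monodromy-filtration axioms ($N M_i\subset M_{i-2}$ and $N^j\colon \Gr^M_{w_1+w_2+j}\xrightarrow{\sim}\Gr^M_{w_1+w_2-j}$) then hold because they hold for every $\mathfrak{sl}_2$-representation; by uniqueness $M^1\otimes M^2$ is the monodromy filtration of $N$.

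For the relative case I would verify the two axioms for $M=M^1\otimes M^2$ directly. The axiom $N(M_i)\subset M_{i-2}$ is immediate from the Leibniz rule together with $N_1(M^1_a)\subset M^1_{a-2}$ and $N_2(M^2_b)\subset M^2_{b-2}$. The second axiom requires that $M$ induce on each $\Gr^W_k(V_1\otimes V_2)$ the monodromy filtration of the induced endomorphism $\bar N$ centered at $k$. Using the canonical Künneth identification $\Gr^W_k(V_1\otimes V_2)=\bigoplus_{p+q=k}\Gr^{W^1}_p V_1\otimes\Gr^{W^2}_q V_2$, the point is to show that the filtration induced by $M^1\otimes M^2$ is the direct sum of the induced filtrations $\bar M^1_{(p)}\otimes\bar M^2_{(q)}$. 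I would establish this by choosing, on each $V_i$ separately, a common splitting (bigrading) of the two filtrations $W^i$ and $M^i$ --- which exists because any two filtrations of a finite-dimensional vector space admit a simultaneous splitting. With respect to the resulting four-fold grading of $V_1\otimes V_2$, both $W$ and $M$ are total-degree filtrations in disjoint sets of indices, so passing to $\Gr^W_k$ and reading off the induced $M$-filtration is transparent and yields exactly $\bigoplus_{p+q=k}\bar M^1_{(p)}\otimes\bar M^2_{(q)}$. Since $M^i$ is by hypothesis the \emph{relative} monodromy filtration, $\bar M^i_{(p)}$ is the monodromy filtration of the induced $\bar N_i$ on $\Gr^{W^i}_p V_i$ centered at $p$, so the absolute case above identifies each summand $\bar M^1_{(p)}\otimes\bar M^2_{(q)}$ with the monodromy filtration of $\bar N$ centered at $p+q=k$; summing over $p+q=k$ gives the required filtration on $\Gr^W_k$. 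The step I expect to be the main obstacle is precisely this commutation of ``induced $M$-filtration'' with the tensor product over the $W$-graded pieces: it is false for an arbitrary pair of filtrations and must be anchored on the compatibility of $W^i$ with $M^i$, which is what the common splitting encodes.

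Finally, for the ``Consequently'' I would check the three requirements of admissibility for $\mathbb{V}_1\otimes\mathbb{V}_2$. Graded-polarizability holds because $\Gr^W(\mathbb{V}_1\otimes\mathbb{V}_2)$ is built from tensor products $\Gr^W_p\mathbb{V}_1\otimes\Gr^W_q\mathbb{V}_2$ of polarizable VHS, whose tensor-product polarizations are again polarizations. The Hodge bundles extend to $\overline X$ because the canonical extension of a tensor product is the tensor of the canonical extensions (for unipotent monodromy, as recalled above) and $\overline{\mathcal F}^\bullet$ of the product is the convolution of the $\overline{\mathcal F}^\bullet_i$. At each point $x\in D$ the residue of the tensor connection is $N_1(x)\otimes\id+\id\otimes N_2(x)$ and the weight filtration of the fibre is $W^1\otimes W^2$; the linear-algebra statement just proved shows that the relative monodromy filtration exists there, which is exactly the admissibility condition at $x$. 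Hence $\mathbb{V}_1\otimes\mathbb{V}_2$ is admissible.
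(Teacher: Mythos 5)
Your proof is correct, but there is nothing internal to compare it with: the paper does not prove this proposition, it quotes it directly from the Appendix of Steenbrink--Zucker, so yours is a genuinely independent argument. Its structure is sound: since the relative monodromy filtration is unique when it exists, it suffices to check the two defining axioms for the candidate $M^1\otimes M^2$, and axiom (1) is indeed immediate from the Leibniz-type expansion of $N$. The delicate step, which you correctly isolate, is identifying the filtration induced by $M^1\otimes M^2$ on $\Gr^W_k(V_1\otimes V_2)$ with $\bigoplus_{p+q=k}\bar M^1_{(p)}\otimes\bar M^2_{(q)}$ under the Künneth isomorphism; your appeal to a simultaneous splitting is legitimate precisely because only the \emph{two} filtrations $W^i,M^i$ on each factor need to be split at once (any two filtrations of a finite-dimensional space admit a common splitting, while three in general do not), and because the splitting need not be compatible with $N_i$ -- the statement being verified is purely filtration-theoretic. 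Two points you leave tacit are true but deserve a line: first, the monodromy filtration commutes with direct sums when all summands are centered at the same weight $k$ (needed when you sum over $p+q=k$; check both axioms summandwise and invoke uniqueness again); second, in the geometric conclusion, the convolution $\sum_{r+s=p}\overline{\mathcal F}^r_1\otimes\overline{\mathcal F}^s_2$ really is a sub\emph{bundle} of $\overline{\mathcal V}$, since its fibre dimension is determined by the (constant) ranks of the graded pieces of the two Hodge filtrations, hence is constant across the puncture. What your route buys is an explicit reduction to pure $\mathfrak{sl}_2$-theory via Jacobson--Morozov; it is worth noting that where the paper does get its hands dirty with these filtrations, namely in the multiplicativity proof of Proposition~\ref{proposition:SteenbrinkZucker-multiplicative}, it works instead with Kashiwara's closed formula $Z_\ell=N(W_\ell)+\bigcap_{j\geq 0}(N^j)^{-1}(W_{\ell-j})$ and direct inclusions, avoiding any choice of splitting -- that style of argument would also yield an alternative, splitting-free verification of your key Künneth step.
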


\subsection{Construction for admissible VMHS}

Under the previous hypothesis for $X$ and for an admissible VMHS, we follow the construction of Steenbrink-Zucker but we make explicit the lax symmetric monoidal dependence on $\mathbb{V}$.

\begin{theorem}[{\cite[\S~4]{SteenbrinkZucker}}]
\label{theorem:SteenbrinkZucker-multiplicative}
For an admissible VMHS $\mathbb{V}$ over a curve $j:X\hookrightarrow \overline{X}$ with unipotent monodromy around the singularities, defined over a field $\kk\subset\RR$, there exists a filtration $\mathfrak{W}_\bullet$ on all terms of Theorem~\ref{theorem:chain-qis} such that the data
\begin{equation*}
\bigl( (R_\TW j_*\mathbb{V}, \mathfrak{W}),\ (\Omega^\bullet_{\overline{X}}(\log D,\overline{\mathcal{V}}), \mathfrak{W}, F) \bigr)
\end{equation*}
is a MHC of sheaves, lax monoidal symmetric for those VMHS with unipotent monodromy.
\end{theorem}

Recall the classical construction.
At each point $x$ of $D$, on the fiber $\overline{\mathcal{V}}_x$, is defined a filtration $\overline{\mathcal{W}}_\bullet(x)$ (coming from the canonical extensions $\overline{\mathcal{W}}_\bullet$ of the local systems $\mathbb{W}_\bullet$ with associated flat bundles $(\mathcal{W}_\bullet, \nabla)$), a nilpotent endomorphism $N(x)$ (residue of $\nabla$), and the relative monodromy filtration $M_\bullet(x)$ whose existence is part of the admissibility conditions. Then is defined a filtration
\begin{equation}
Z_k(x):=N(x)(\overline{\mathcal{W}}_k(x))+M_{k-1}(x)\cap\overline{\mathcal{W}}_{k-1}(x) .
\end{equation}
This defines uniquely a filtration $\mathfrak{W}_\bullet$ of $\Omega^\bullet_{\overline{X}}(\log D,\overline{\mathcal{V}})$, where $\mathfrak{W}_k \Omega^\bullet_{\overline{X}}(\log D,\overline{\mathcal{V}})$ is the sub-complex of $\Omega^\bullet_{\overline{X}}(\log D,\overline{\mathcal{W}}_k)$ formed by $\overline{\mathcal{W}}_k$ in degree $0$ and whose part in degree $1$ at $x$ lies between $\Im(\nabla)$ and $\Omega_{\overline{X}}^1(\log D,\overline{\mathcal{W}}_k)$ and is sent to $Z_k(x)$ via the residue
\begin{equation*}
\Res_x:\Omega^1_{\overline{X}}(\log D,\overline{\mathcal{W}}_k) \longrightarrow \overline{\mathcal{W}}_k(x).
\end{equation*}
For a local coordinate $t$ at $x$, with $Z_k(x)$ extending to a local system $\mathcal{Z}_k$ and a sub-bundle $\overline{\mathcal{Z}}_k$ of $\overline{\mathcal{W}}_k$, the complex $\mathfrak{W}_k \Omega^\bullet_{\overline{X}}(\log D,\overline{\mathcal{V}})$ is
\begin{equation}
\label{equation:local-coordinate-W-SteenbrinkZucker}
\overline{\mathcal{W}}_k \stackrel{d}{\longrightarrow}
\frac{dt}{t}\otimes (\overline{\mathcal{Z}}_k + t\overline{\mathcal{W}}_k) .
\end{equation}

On $Rj_* \mathbb{V}$ over $\kk$, for any functorial resolution $Rj_*$, this corresponds to the following construction: $\mathfrak{W}_k Rj_* \mathbb{V}$ is the sub-complex of $Rj_*\mathbb{V}$ formed by
\begin{enumerate}
\item First consider $Rj_*(\mathbb{W}_k) \subset Rj_* \mathbb{V}$.
\item Then truncate: $(\tau_{\leq 1}Rj_*(\mathbb{W}_k))^0=Rj_*(\mathbb{W}_k)^0$,
\begin{equation*}
(\tau_{\leq 1}Rj_*(\mathbb{W}_k))^1=\Ker\bigl(d:Rj_*(\mathbb{W}_k)^1\longrightarrow Rj_*(\mathbb{W}_k)^2\bigr)
\end{equation*}
and $(\tau_{\leq 1}Rj_*(\mathbb{W}_k))^i=0$ for $i>1$. It has the same cohomology as $Rj_*(\mathbb{W}_k)$ since on a curve $R^i j_*(\mathbb{W}_k)=0$ for $i>1$.
\item Finally $(\mathfrak{W}_k Rj_* \mathbb{V})^0:=(Rj_*(\mathbb{W}_k))^0$ and
\begin{equation*}
(\mathfrak{W}_k Rj_* \mathbb{V})^1 := d(Rj_*(\mathbb{W}_k)^0) + (\tau_{\leq 1}Rj_*(\mathbb{W}_k))^1 \cap Rj_*(\mathcal{Z}_k)^1 .
\end{equation*}
\end{enumerate}
The computations of $\Gr^{\mathfrak{W}}_\bullet$ done in their article indeed shows that this gives a MHC of sheaves.

\begin{proof}[{Proof of the lax symmetric monoidal condition in Theorem~\ref{theorem:SteenbrinkZucker-multiplicative}}]
In our situation, we apply this with $Rj_*$ replaced by any of the functors appearing in Proposition~\eqref{theorem:chain-qis}, that is, with $R_\TW$, $R_\TW j_* \Omega_X^\bullet$, $R_\TW j_*(\TW^* \Omega_X^\bullet)$, $j_*(\TW^* \Omega_X^\bullet)$. These are defined over $\kk$ when $\mathbb{V}$ is.

Given two admissible VMHS $\mathbb{V}_1$, $\mathbb{V}_2$ and a point $x\in D$, with associated residues $N_1(x), N_2(x)$, monodromy filtrations $M^1_\bullet(x),M^2_\bullet(x)$, filtrations $Z^1_\bullet(x),Z^2_\bullet(x)$ etc, we have natural symmetric maps
\begin{equation*}
R_\TW j_* \mathbb{V}_1 \otimes R_\TW j_* \mathbb{V}_2 \longrightarrow R_\TW j_* (\mathbb{V}_1 \otimes \mathbb{V}_2)
\end{equation*}
and
\begin{equation*}
\Omega^\bullet_{\overline{X}}(\log D,\overline{\mathcal{V}}_1) \otimes \Omega^\bullet_{\overline{X}}(\log D,\overline{\mathcal{V}}_2) \longrightarrow \Omega^\bullet_{\overline{X}}(\log D,\overline{\mathcal{V}_1 \otimes \mathcal{V}_2})
\end{equation*}
(and similarly for all the other functors $Rj_*$) and this last one is compatible with the filtrations $F^\bullet$. What remains to check is the compatibility with this weight filtration~$\mathfrak{W}$.

Since our complexes are concentrated in degrees $0$ and $1$ the only non-trivial multiplication is the one between degrees $0$ and $1$. If we denote by $M_\bullet(x)$ the relative monodromy filtration of $\mathbb{V}_1 \otimes \mathbb{V}_2$ at $x$, for the residue $N(x)$ with its associated $Z_\bullet$(x), all we have to do (explicit for example in equation~\eqref{equation:local-coordinate-W-SteenbrinkZucker}) is to check that there is a natural inclusion
\begin{equation}
\overline{\mathcal{W}}^1_k(x) \otimes Z^2_\ell(x) \subset Z_{k+\ell}(x)
\end{equation}
for all $k,\ell$.

This is linear algebra; to compute this let us drop the letter $x$ and write $W_\bullet$ for $\overline{\mathcal{W}}_\bullet$. Recall that $N={N_1}\otimes{\id_2}+{\id_1}\otimes{N_2}$, $M=M^1 \otimes M^2$, $W=W^1 \otimes W^2$ and $Z_\ell=N(W_\ell)+M_{\ell-1}\cap W_{\ell-1}$. But it will be more practical to have another expression for $Z_\bullet$ as proved in \cite[\S~3.4]{Kashiwara}:
\begin{equation}
Z_\ell = N(W_\ell) + \bigcap_{j\geq 0} (N^j)^{-1}(W_{\ell-j}) .
\end{equation}
So, to compute $W_k^1 \otimes Z^2_\ell$, for one term of the sum
\begin{equation*}
W^1_k \otimes N_2(W^2_\ell) \subset ({N_1}\otimes{\id_2}+{\id_1}\otimes{N_2)({W^1_k} \otimes {W^2_\ell}})  \subset N(W_{k+\ell}) \subset Z_{k+\ell}
\end{equation*}
and for the other term
\begin{multline*}
W^1_k \otimes \bigcap_{j\geq 0} (N_2^j)^{-1}(W^2_{\ell-j}) \\
\subset \bigcap_{j\geq 0} \bigl(({N_1}\otimes{\id_2}+{\id_1}\otimes{N_2})^j\bigr)^{-1}(W^1_k \otimes W^2_{\ell-j}) \\
\subset \bigcap_{j\geq 0} (N^j)^{-1}(W_{k+\ell-j}) \subset Z_{k+\ell} .
\end{multline*}
This concludes.
\end{proof}

In this construction, if $\mathbb{V}$ has weights greater than $w$ then $H^i(X,\mathbb{V})$ has weights greater than $w+i$ for all~$i$.

\begin{remark}
If $\mathbb{V}$ is a pure VHS of weight $w$, then in the above construction the filtration $W_\bullet$ is concentrated in weight $w$ and $M_\bullet$ is simply the monodromy filtration of $N$, so the formula for $Z$ is
\begin{equation*}
Z_k(x)=N(x)(\overline{\mathcal{V}}_x)+M_{k-1}(x), \quad w\leq k \leq w+q+1
\end{equation*}
(where $q$ is the order of nilpotency of $N(x)$), with $Z_k(x)=0$ or $\overline{\mathcal{V}}_x$ outside of this range.
This is the original construction of \cite[\S~13]{Zucker}, here with $R_\TW j_*$.
\end{remark}

\begin{remark}
If furthermore there is a point $x$ of $D$ over which $\mathbb{V}$ is non-degenerate (i.e.\ extends as VHS, equivalently has trivial monodromy) then the following happens: the monodromy is trivial, the canonical extension of $\mathbb{V}$ is $\mathbb{V}$ itself, the operator $N(x)$ is zero, the monodromy filtration is trivial and the weight filtration is concentrated in $w$. One can then check line by line that everything reduces to the case of our section~\ref{section:VHS-extendable}.
\end{remark}

\section{Mixed Hodge modules}
\label{section:MHM}

To deal with the most general case, we \emph{need} to use the theory of mixed Hodge modules of Saito. This actually constructs a whole category $\MHM(X)$ for algebraic varieties $X$ with a six-functor formalism and vanishing and nearby cycles, enhancing the category of perverse sheaves with mixed Hodge structures, and such that a VMHS on an open dense subset of $X$ defines an object of $\MHM(X)$. The MHS on cohomology then comes from the push-forward to the point; tracking down the construction gives indeed a mixed Hodge complex.

This is contained in two articles \cite{SaitoMH}, \cite{SaitoMHM}. See also the survey of Schnell \cite{Schnell}. Since we focus only on the construction of MHS on cohomology with local coefficients, the survey of Arapura \cite{Arapura} as well as the short note \cite{SaitoAdmissible} are very useful.

For this to work there is again a notion of admissibility for a VMHS on a complex manifold $X$ relatively to a compactification $\overline{X}$, studied by Kashiwara in \cite{Kashiwara}. It is given by testing the admissibility of Steenbrink-Zucker on pull-backs to curves; it includes in particular the quasi-unipotency of the monodromy around the boundary. Note that it depends on the compactification but still it is an invariant of codimension~$2$, hence for smooth algebraic varieties it is independent because all smooth compactifications are birationally equivalent.

\subsection{Brief reminder}
For any complex manifold $X$ of some dimension $N$ is defined a category $\MF(X)$ of \emph{filtered $\mathcal{D}_X$-modules with $\kk$-structure} ($\kk \subset \RR$) whose objects are triples $(P,\mathcal{M},\alpha)$ where $P$ is a perverse sheaf over $\kk$ on $X$,
$\mathcal{M}$ is a regular holonomic $\mathcal{D}_X$-module with a good filtration $\mathcal{F}^\bullet$, and $\alpha$ is a quasi-isomorphism $\DR(\mathcal{M})\simeq P\otimes\CC$ where
\begin{equation}
\DR(\mathcal{M}) := \bigl\{ \mathcal{M} \longrightarrow \Omega^1_X \otimes \mathcal{M} \longrightarrow \cdots \longrightarrow \Omega^N_X \otimes \mathcal{M} \bigr\}[N] .
\end{equation}
(the leftmost term is placed in degree~$-N$).
Any VHS $(\mathbb{V}, \mathcal{F}^\bullet, \nabla)$ over $X$ defines an object of $\MF(X)$ where $P$ is $\mathbb{V}[N]$ (shifting the local system over $\kk$ by the dimension of $X$ to make it a perverse sheaf on $X$) and $\mathcal{M}$ is the $\mathcal{D}_X$-module coming from the flat connection $(\mathbb{V}\otimes\mathcal{O}_X,\nabla)$. Griffiths' transversality is then precisely the statement that $\mathcal{F}^\bullet$ forms a filtration of $\mathcal{M}$ as $\mathcal{D}_X$-module.

For each $w\in\ZZ$, the category of \emph{pure polarizable Hodge modules of weight $w$} is a full sub-category $\MH(X,w)\subset \MF(X)$. The axioms are quite evolved; they imply that each element of $\MH(X,w)$ has a \emph{decomposition by strict support} running over the irreducible closed subvarieties $Z\subset X$, and that such an element with strict support $Z$ is a polarizable VHS of weight $w-\dim(Z)$ over some dense open subset of~$Z$.
One important theorem of Saito is that any polarizable VHS $\mathbb{V}$ over $X\subset \overline{X}$ of weight $w$ defines a pure polarizable Hodge module of weight $w+N$ over $\overline{X}$ with full support. The underlying perverse sheaf is the usual intermediate extension functor applied to~$\mathbb{V}$.

There is also a category $\MFW(X)$ whose objects are the objects of $\MF(X)$ equipped with an additional increasing filtration $W_\bullet$ (so $W_\bullet$ is defined on $P$ as well as on $\mathcal{M}$ and $\alpha$ is a filtered quasi-isomorphism). Any VMHS (without additional assumption) over $X$ defines on object of $\MFW(X)$ similarly as in the pure case. Then the category of \emph{mixed Hodge modules} $\MHM(X)$ is a full subcategory of $\MFW(X)$, whose $\Gr^w_W$ of the objects are in $\MH(X,w)$, with additional axioms on the way these graded parts are related (in dimension~$1$ these axioms come from the admissibility condition).

So this is the statement of one of the main theorems of Saito:

\begin{theorem}[{\cite[3.27]{SaitoMHM}}]
Let $\overline{X}$ be a compact complex manifold, let $j:X\hookrightarrow \overline{X}$ be the complement of a normal crossing divisor $D$. Let $\mathbb{V}$ be a VMHS over $X$ admissible in $\overline{X}$. Then this extends to mixed Hodge module over $\overline{X}$ whose underlying perverse sheaf $P$ is $Rj_*\mathbb{V}[N]$ and whose underlying $\mathcal{D}_{\overline{X}}$-module $\mathcal{M}$ is the meromorphic extension of $(\mathbb{V}\otimes\mathcal{O}_X , \nabla)$.
\end{theorem}

In particular the filtration $\mathcal{F}$ of $\mathcal{M}$ is defined using the sheaves $j_* \mathcal{F}$ (the fact that these extend comes from the admissibility condition). The procedure to construct the filtration $\mathcal{W}$ is more complicated and we will need to detail it to show that is behaves well multiplicatively.

But first let us explain how from this we get a MHC computing the cohomology with coefficients in~$\mathbb{V}$ (recall that $\overline{\mathcal{V}}^{\mathopen\lbrack 0, 1\mathclose\rparen}$ is the canonical extension of the vector bundle $\mathcal{V}$ with residues in $\mathopen\lbrack 0, 1\mathclose\rparen$):

\begin{corollary}[{See~\cite[4.12]{Arapura}}]
If $\overline{X}$ is compact Kähler (or proper algebraic), there are filtrations $\mathfrak{W}, F$ on $\Omega^\bullet_{\overline{X}}(\log D,\overline{\mathcal{V}}^{\mathopen\lbrack 0, 1 \mathclose\rparen})$ such that the data
\begin{equation*}
\bigl( (Rj_*\mathbb{V}, \mathfrak{W}),\ (\Omega^\bullet_{\overline{X}}(\log D,\overline{\mathcal{V}}^{\mathopen\lbrack 0, 1 \mathclose\rparen}), \mathfrak{W}, F) \bigr)
\end{equation*}
is a mixed Hodge complex of sheaves over $\overline{X}$.
\end{corollary}

\begin{proof}
We use the inclusion
\begin{equation}
(\Omega^\bullet_{\overline{X}}(\log D,\overline{\mathcal{V}}), \mathfrak{W}_\bullet[N], F_\bullet)[N] \subset (\DR(\mathcal{M}), \mathfrak{W}_\bullet, F_\bullet)
\end{equation}
to define the filtrations of the left-hand side; the shift is necessary because of the convention that a VHS of weight $k$ extends to a Hodge module of weight $k+N$. This becomes a filtered quasi-isomorphism. The weight filtration is constructed in such a way that the terms $\Gr^\mathfrak{W}_k$ of the right-hand side are pure polarizable Hodge modules of weight~$k$.

It remains to check the axioms of MHC for $K:=R\Gamma(\overline{X},-)$, the derived push-forward to a point, and in particular to show that the $\Gr^\mathfrak{W}_k K$ are pure Hodge complexes of weight~$k$. This follows from the theorems of Saito (\cite[Theorem~5.3.1]{SaitoMH}) on push-forwards of pure Hodge modules, applied to the point: the filtration $F$ is strict and the cohomology carries pure Hodge structures. This is written in loc.\ cit.\ for projective morphisms, but on the point this also works for compact Kähler manifolds (see \cite[5.3.8.2]{SaitoMH}), since this relies only on the analytic theory of~\cite{CattaniKaplanSchmid}, and in dimension $1$ this is just the work of Zucker. This also holds over any smooth proper algebraic variety (by Chow's lemma, it is birational to a projective variety) or even a manifold bimeromophic to a compact Kähler manifold (Fujiki class $\mathscr{C}$) in~\cite{SaitoMHM}.
\end{proof}

We now need a concrete understanding of the weight filtration of this MHC and how it behaves multiplicatively. This is easier to do when we assume unipotent monodromy than the general case of quasi-unipotent monodromy.

We work locally and reduce to $\overline{X}$ being a polydisk of dimension $N$ with coordinates $x_1,\dots,x_N$ and $X$ is the complement of a normal crossing divisor~$D$ with components $D_i=\{x_i=0\}$. We assume that the admissible VMHS $\mathbb{V}$ has unipotent monodromy around each component of $D$ and is extended to the $\mathcal{D}_{\overline{X}}$-module $\mathcal{M}$, while the local system $\mathbb{V}$ is extended to a perverse sheaf $P:=Rj_*\mathbb{V}[N]$. The weight filtration is defined in the same way for the $\mathcal{D}_{\overline{X}}$-module and for the perverse sheaf so we focus here on~$\mathcal{M}$.

Along each axis $D_i$ is defined an increasing $V$-filtration $V(i)$ of $\mathcal{M}$ which is indexed by $\mathbb{Z}$ (since we work only with unipotent monodromy). Its properties are that $\Gr_0^{V(i)}\mathcal{M}$ is the vanishing cycles $\psi_{x_i}\mathcal{M}$ of $\mathcal{M}$ for the function $x_i$, and $\Gr_{-1}^{V(i)}\mathcal{M}$ is the nearby cycles $\phi_{x_i}\mathcal{M}$. There are canonical maps $\can_i:\psi_{x_i}\mathcal{M}\rightarrow \phi_{x_i} \mathcal{M}$ given by $-\partial_i$, $\var_i:\phi_{x_i}\mathcal{M}\rightarrow \psi_{x_i} \mathcal{M}$ given by multiplication by $x_i$, and $N_i$ the endomorphism of $\psi_{x_i}\mathcal{M}$ given by the logarithm of the monodromy around $D_i$. 

Now for a subset of indices $I\subset\{1,\dots,N\}$ is defined an object $\Psi_I\mathcal{M}$ by a compositions of $N$ functors $u_1 \circ \cdots \circ u_N$ applied to $\mathcal{M}$, where $u_i$ is either $\psi_{x_i}$ if $i\in I$ or $\phi_{x_i}$ if $i\notin I$; these all commute. The nearby and vanishing cycles induce maps, for $i\not\in I$,
\begin{equation*}
\can_i : \Psi_{I\cup\{i\}}\mathcal{M} \rightleftarrows \Psi_{I}\mathcal{M} : var_i
\end{equation*}
The important fact is that the filtered $\mathcal{D}_{\overline{X}}$-module $\mathcal{M}$ can be entirely reconstructed from the data of such a quiver (\cite{GalligoGrangerMaisonobe}, see also \cite[\S~4.5]{Arapura}, \cite{SaitoAdmissible}). In our case of unipotent monodromy, all $\Psi_I\mathcal{M}$ are vector spaces identified with $\overline{\mathcal{V}}_x$, the canonical fiber at $x=0$. They are all equipped  with a weight filtration~$W_{\bullet}$ coming directly from the structure of VMHS, and commuting nilpotent endomorphisms $N_i$ compatible with~$W_\bullet$.

So, the appropriate weight filtration $\mathfrak{W}$ of the mixed Hodge module can also be understood from this quiver. It is obtained by replacing $W_{\bullet}$ on $\Psi_I\mathcal{M}$ by another filtration $W_{I,\bullet}$. If $I = \{i\}$ then $W_{I, k}$ is given, as in the case of curves, by the filtration
$Z_i(W)_{k} : =N_i(W_k) + M_{k-1} \cap W_{k-1}$ where $M$ is the relative monodromy filtration for $N_i$. Then this process has to be iterated: if $I = \{i_1, \dots, i_r\}$ then
\begin{equation*}
W_{I,k} := Z_{i_1}(Z_{i_2}(\cdots Z_{i_r}(W)))_k .
\end{equation*}
The $W_{I,\bullet}$ all together define a filtration of the quiver and then a filtration $\mathfrak{W}_\bullet$ of the $\mathcal{D}_{\overline{X}}$-module $\mathcal{M}$ and of the perverse sheaf $P$.

\begin{theorem}
Under the previous hypotheses and assuming that the monodromy of $\mathbb{V}$ is unipotent around~$D$, there are filtrations $\mathfrak{W},F$ such that
\begin{equation*}
\bigl( (R_\TW j_*\mathbb{V}, \mathfrak{W}),\ (\Omega^\bullet_{\overline{X}}(\log D,\overline{\mathcal{V}}), \mathfrak{W}, F) \bigr)
\end{equation*}
is a lax symmetric monoidal MHC of sheaves.
\end{theorem}

\begin{proof}
As previously, there is a chain of (multiplicative, but not yet filtered) quasi-isomorphisms relating these two parts, and it is lax symmetric monoidal when we work with unipotent monodromy.

The filtration $\mathfrak{W}$ is put on $R_\TW j_*\mathbb{V}$ simply because this is isomorphic as perverse sheaf to the previous~$P$, hence the elements $\mathfrak{W}_k \subset P$ are sub-perverse sheaves and correspond to sub-objects of $R_\TW j_*\mathbb{V}$, up to this shift by the dimension of $\overline{X}$.

Then it remains only to check that the construction of the filtration $\mathfrak{W}$ behaves well multiplicatively and we see that this is a generalization of the case of curves. Given the two VMHS $\mathbb{V}_1, \mathbb{V}_2$ on $X$, admissible and unipotent, and extended to mixed Hodge modules on $\overline{X}$ with underlying $\mathcal{D}_{\overline{X}}$-module $\mathcal{M}_1,\mathcal{M}_2$, the weight filtration is described as above with the filtered quiver formed by the $\Psi_I\mathcal{M}_1$ and $\Psi_I\mathcal{M}_2$.  The tensor product VMHS $\mathbb{V}=\mathbb{V}_1 \otimes \mathbb{V}_2$ with its tensor product weight filtration $W_\bullet$ is extended to a mixed Hodge module with underlying $\mathcal{D}_{\overline{X}}$-module $\mathcal{M}$ which is exactly $\mathcal{M}_1 \otimes \mathcal{M}_2$ when forgetting the filtrations.

The important fact is that in our previous description of $\mathcal{D}_{\overline{X}}$-modules near a point of normal crossings, then under tensor product, $\Psi_I\mathcal{M}_1$ and $\Psi_J\mathcal{M}_2$ get paired to $\Psi_{I\cup J}\mathcal{M}$ if $I\cap J=\emptyset$ or $0$ else. This is because each $\Psi_I$ corresponds to applying $\Gr^{V(i)}_{-1}$ for $i\in I$ and $\Gr^{V(i)}_{0}$ for $i\notin I$.

So the compatibility to check between the tensor products of filtrations involves similar computations as in the proof of Theorem~\ref{theorem:SteenbrinkZucker-multiplicative} (the case of curves), that is,
\begin{equation*}
W^1_k \otimes W^2_\ell \subset W_{k+\ell}
\end{equation*}
and
\begin{equation*}
W^1_k \otimes Z_i(W^2)_\ell \subset Z_i(W)_{k+\ell}
\end{equation*}
applied several times
\end{proof}

\section{Application to the deformation theory of representations of fundamental groups}
\label{section:applications}

In this final section we use the constructions of mixed Hodge diagrams of Lie algebras to describe the formal deformation theory of representation of the fundamental group of a variety $X$. Thus the results presented here extend and improve directly the results of \cite{Lefevre2}.

\subsection{Representations of fundamental groups}

Let $X$ be any differentiable manifold with fundamental group $\pi_1(X,x)$ that is finitely presentable. Let $G$ be a linear algebraic group defined over the field $\kk\subset\RR$ or $\kk=\CC$ with Lie algebra $\mathfrak{g}$. The representations
\begin{equation*}
\rho:\pi_1(X,x)\rightarrow G(\kk)
\end{equation*}
are parametrized by the $\kk$-points of an affine scheme of finite type $\Hom(\pi_1(X,x), G)$ (see \cite{LubotzkyMagid}). Fixing such a representation $\rho$, let $\Ohat_\rho$ be the completion of the local ring of $\Hom(\pi_1(X,x), G)$ at its $\kk$-point $\rho$. It has an associated deformation functor $\Def_\rho$, which is a functor from the category of local Artin $\kk$-algebras with residue field $\kk$ (simply denoted by $\Art$) to the category of sets given by
\begin{equation}
\Def_\rho : A\longmapsto\bigl\{ \tilde{\rho}:\pi_1(X,x)\rightarrow G(A) \bigm|
\tilde{\rho}=\rho\ \text{over}\ G(\kk) \bigr\} = \Hom(\Ohat_\rho, A).
\end{equation}

Over $\CC$, the representation $\rho$ defines a flat principal $G$-bundle $P$ over $X$. The associated bundle
\begin{equation}
\ad_\rho := \ad(P) = P \times^G \mathfrak{g}
\end{equation}
where $G$ acts on $\mathfrak{g}$ via $\Ad$ is then a flat vector bundle with a Lie bracket. So the algebra $L$ of $\mathcal{C}^\infty$ differential forms over $X$ with values in $\ad_\rho$ has the structure of a differential graded Lie algebra: the bracket combines the wedge product of differential forms and the Lie bracket in $\mathfrak{g}$, and the differential acts only on differential forms since $\ad_\rho$ is flat.

To any such DG Lie algebra (over any field $\kk$ of characteristic zero) is associated a deformation functor $\Def_L$. For $A\in\Art$ with its unique maximal ideal $\mathfrak{m}_A$, then $L^0\otimes\mathfrak{m}_A$ is a \emph{nilpotent} Lie algebra, thus has a group structure denoted by
\begin{equation*}
(\exp(L^0\otimes\mathfrak{m}_A),*)
\end{equation*}
given by the Baker-Campbell-Hausdorff formula, and acts on $L^1 \otimes\mathfrak{m}_A$ by \emph{gauge transformations}. Then $\Def_L$ is given by
\begin{equation}
\Def_L(A, \mathfrak{m}_A):=
\biggl\{ \omega\in L^1\otimes\mathfrak{m}_A \biggm|
0=d(\omega)+\frac{1}{2}[\omega,\omega] \in L^2\otimes\mathfrak{m}_A \biggr\}/\exp(L^0 \otimes\mathfrak{m}_A)
\end{equation}
The important theorem is that this is invariant of $L$ under quasi-isomorphisms.

In our case $L$ is equipped with an augmentation $\epsilon:L\rightarrow\mathfrak{g}$ which evaluates differential forms at $x$. To this is associated an \emph{augmented deformation functor} $\Def_{L,\epsilon}$ which is a small variation of $\Def_L$, explicitly written by Eyssidieux-Simpson \cite[\S~2.1.1]{EyssidieuxSimpson}:
\begin{equation}
\Def_{L,\epsilon}(A, \mathfrak{m}_A) := \\
\biggl\{ (\omega, e^\alpha) \in (L^1\otimes\mathfrak{m}_A) \times \exp(\mathfrak{g}\otimes\mathfrak{m}_A) \biggm|
0=d(\omega)+\frac{1}{2}[\omega,\omega] \biggr\} / \exp(L^0 \otimes\mathfrak{m}_A)
\end{equation}
where, for $e^\lambda \in \exp(L^0 \otimes\mathfrak{m}_A)$,
\begin{equation}
e^\lambda\cdot(\omega,e^\alpha) := (e^\lambda \cdot\omega,\ e^\alpha*e^{-\epsilon(\lambda)}) .
\end{equation}
See \cite{ManettiLectures} for many more details on formal deformation theory and DG Lie algebras.

Part of the main theorem of Goldman-Millson can be stated as follows:

\begin{theorem}[{Goldman-Millson \cite{GoldmanMillson}}]
For any differentiable manifold $X$ with finitely presentable fundamental group, and for a representation $\rho$ of $\pi_1(X,x)$ into a linear algebraic group $G$, with $L$ the DG Lie algebra of $\mathcal{C}^\infty$ differential forms with values in $\ad(\rho)$, there is an isomorphism of deformation functors
\begin{equation*}
\Def_\rho \simeq \Def_{L,\epsilon} .
\end{equation*}
Furthermore, a quasi-isomorphism of DG Lie algebras $L\rightarrow M$ augmented over $\mathfrak{g}$ induces an isomorphism of deformation functors $\Def_{L,\epsilon}\rightarrow \Def_{M,\epsilon}$.
\end{theorem}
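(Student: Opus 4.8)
The statement splits into two essentially independent assertions: the identification $\Def_\rho \simeq \Def_{L,\epsilon_x}$, which is a deformation-theoretic form of the Riemann--Hilbert correspondence, and the homotopy invariance of $\Def_L$ (and its augmented variant) under quasi-isomorphisms, which is the genuinely hard algebraic input. The plan is to treat them separately.

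For the first isomorphism, I would translate deformations of $\rho$ into deformations of the flat connection $\nabla$ on $\ad(\rho)$. Given $A \in \Art_\kk$ with maximal ideal $\mathfrak{m}_A$, an element $x \in L^1 \otimes \mathfrak{m}_A$ is precisely a deformation $\nabla + x$ of the connection, and the Maurer--Cartan equation $dx + \frac{1}{2}[x,x] = 0$ is exactly the flatness $(\nabla+x)^2 = 0$. One then takes monodromy/parallel transport of the flat $G(A)$-connection $\nabla + x$ to produce a homomorphism $\pi_1(X,x) \to G(A)$ reducing to $\rho$ modulo $\mathfrak{m}_A$. The point requiring care is the rigidification at the base point: a representation is a genuine homomorphism, not one taken up to conjugation, so the construction must remember a framing of the bundle at $x$. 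This is exactly the role of the extra factor $\exp(\mathfrak{g} \otimes \mathfrak{m}_A)$ and the twisted action in the definition of $\Def_{L,\epsilon_x}$. I would check that gauge transformations by $\exp(L^0 \otimes \mathfrak{m}_A)$ correspond to isomorphisms of flat bundles, that their effect on the framing is recorded by $\epsilon_x$, and that the resulting map is a bijection functorial in $A$.

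For the homotopy invariance, the plan is to apply the standard smoothness criterion for deformation functors associated to DG Lie algebras. A morphism of DG Lie algebras $f : L \to M$ induces a morphism $\Def_L \to \Def_M$ by applying $f \otimes \id$ to Maurer--Cartan elements and to gauges. The tangent space of $\Def_L$ is $H^1(L)$ and its obstructions lie in $H^2(L)$; since $f$ is a quasi-isomorphism it induces an isomorphism on all $H^i$, hence a bijection on tangent spaces and an injection on obstruction spaces. By the standard smoothness criterion this forces $\Def_L \to \Def_M$ to be an isomorphism. Alternatively, one may factor $f$ through its mapping cylinder to reduce to the case of a surjective quasi-isomorphism with acyclic kernel, where the gauge-fixing and obstruction arguments are transparent. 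For the augmented version one runs the same argument while tracking the auxiliary $\exp(\mathfrak{g}\otimes\mathfrak{m}_A)$ factor, which is preserved because the quasi-isomorphisms are compatible with the augmentations over $\mathfrak{g}$.

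The main obstacle is the homotopy invariance: establishing the smoothness criterion rigorously requires that $\Def_L$ be a well-behaved deformation functor, satisfying Schlessinger's conditions together with a complete obstruction theory, and controlling the passage through the mapping cylinder at the level of the quotient by gauge equivalence is delicate. The first isomorphism, by contrast, is a concrete monodromy computation once the framing bookkeeping is set up. All of this is carried out in detail in Goldman--Millson \cite{GoldmanMillson}, with the augmented refinement in Eyssidieux--Simpson \cite{EyssidieuxSimpson}; see also \cite{ManettiLectures}.
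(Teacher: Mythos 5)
The paper states this theorem as an imported result and gives no proof of its own — it cites Goldman--Millson for the deformation-theoretic statement and Eyssidieux--Simpson for the augmented functor $\Def_{L,\epsilon_x}$, which are precisely the sources your sketch reproduces. Your proposal (monodromy of the deformed flat connection $\nabla+x$ with the framing recorded by the $\exp(\mathfrak{g}\otimes\mathfrak{m}_A)$ factor and the twisted gauge action via $\epsilon_x$, plus the standard smoothness criterion on tangent and obstruction spaces for quasi-isomorphism invariance) is correct and is essentially the same argument as in the cited references.
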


In other words $\Ohat_\rho$ can be computed from the data of $L$ above $\mathfrak{g}$ up to quasi-isomorphism. In our previous work, we found canonical formulas compatible with the notion of mixed Hodge complex to construct a functorial MHS on $\Ohat_\rho$ in situations where $L$ has a structure of mixed Hodge diagram of Lie algebras. Implicitly, we consider $\Ohat_\rho$ to be a projective limit
\begin{equation*}
\Ohat_\rho = \lim_{\leftarrow} \Ohat_\rho/\mathfrak{m}^n
\end{equation*}
of its quotients by powers of its maximal ideal, that are \emph{finite-dimensional}, hence a MHS on $\Ohat_\rho$ means a projective limit of MHS's, that are also compatible with their structure of algebra.

\begin{theorem}[{\cite{Lefevre2}}]
\label{theorem:main-construction-previous-work}
When $\rho$ is a representation of $\pi_1(X,x)$ for which one can construct a mixed Hodge diagram of Lie algebras $L$ over $\kk$ computing the cohomology of $X$ with coefficients in $\ad_\rho$, with its augmentation $\epsilon:L\rightarrow\mathfrak{g}$ at $x$, then one can construct a MHS on $\Ohat_\rho$ over $\kk$. It is functorial in $L$ (over $\mathfrak{g}$) and invariant under quasi-isomorphisms.
\end{theorem}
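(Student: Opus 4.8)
The plan is to leverage the Goldman-Millson theorem stated above, which identifies $\Def_\rho$ with the augmented functor $\Def_{L,\epsilon_x}$ and shows that this functor depends on $L$ only up to quasi-isomorphism. Thus $\Ohat_\rho$ is the pro-representing hull of $\Def_{L,\epsilon_x}$, and the task is to produce on this hull a mixed Hodge structure intrinsic to $L$. My strategy is to reduce to the minimal model: I would transfer the DG Lie structure of $L$ onto its cohomology $H^\bullet(L)$ as an $L_\infty$ structure $(\mu_n)_{n\ge 2}$ quasi-isomorphic to $L$, doing so compatibly with the mixed Hodge structure. Once the higher operations are morphisms of MHS, every functorial construction producing $\Ohat_\rho$ out of $H^\bullet(L)$ stays inside the abelian tensor category of mixed Hodge structures (which has tensor products and internal Hom, as recalled in \S\ref{section:Hodge}), and thereby equips $\Ohat_\rho$ with a MHS.

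Concretely I would describe $\Ohat_\rho$ by Kuranishi and bar-cobar theory. The Zariski tangent space to $\Def_{L,\epsilon_x}$ is the cocycle space $Z^1$, which splits (non-canonically) as the coboundaries $B^1$ — the unobstructed orbit directions, governed through $\epsilon_x$ by $\mathfrak g=\ad(\rho)_x$ and the stabilizer $H^0(L)$ — together with $H^1(L)$; the obstructions lie in $H^2(L)$. Dualizing the Maurer-Cartan equation $\sum_{n\ge 2}\tfrac1{n!}\mu_n(x,\dots,x)=0$ modulo gauge then presents $\Ohat_\rho$ as a quotient of a completed symmetric algebra on $(Z^1)^\vee$ by an ideal generated by the components of the Kuranishi map valued in $H^2(L)$, which is a presentation of the shape \eqref{equation:presentation-Orho}. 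More invariantly, $\Ohat_\rho$ is computed by the completed Chevalley-Eilenberg (Koszul-dual) construction applied to the $L_\infty$ algebra $H^\bullet(L)$, augmented along $\epsilon_x$. The point to retain is that each ingredient — $H^0(L),H^1(L),H^2(L)$, the augmentation target $\mathfrak g$, the operations $\mu_n$, and the symmetric and tensor powers entering the cochain construction — is an object or a morphism in the category of MHS.

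The heart of the proof, and the step I expect to be the main obstacle, is to arrange that the transferred operations $\mu_n$ are morphisms of mixed Hodge structures. Here the hypothesis that $L$ is a mixed Hodge diagram of Lie algebras is decisive: strictness of $d$ with respect to $W$ and $F$ endows each $H^i(L)$ with a functorial MHS, and $\mu_2$ is a morphism of MHS by construction. To control the higher $\mu_n$ I would choose homotopy-transfer data — a projection onto cohomology, a section, and a contracting homotopy — separately on the rational component $L_\kk$ (respecting $W$, hence defined over $\kk$) and on the complex component $L_\CC$ (respecting both $W$ and $F$, using the functorial Deligne bigrading $A^{p,q}$ of the MHS), compatibly across the comparison quasi-isomorphisms. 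Because the transferred $\mu_n$ are universal sums over trees built from the bracket, the section, the projection and the homotopy, this double compatibility forces each $\mu_n$ to be defined over $\kk$ and to preserve $W$ and $F$, i.e.\ to be a morphism of MHS. The delicate point is precisely that $F$ lives only on the $\CC$-side while the rational structure lives on the $\kk$-side; it is the two-component nature of a mixed Hodge complex, rather than a single filtered object, that makes such a simultaneous choice possible.

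It remains to check that the construction is canonical. Two admissible choices of transfer data yield $L_\infty$-isomorphic structures through a filtered $L_\infty$-isomorphism, so the induced MHS on $\Ohat_\rho$ is independent of the choice; likewise the invariance statement of Goldman-Millson upgrades to the filtered setting, giving independence of the chosen model $L$ and functoriality in $(L,\epsilon_x)$ up to quasi-isomorphism. Finally, since $\Ohat_\rho=\varprojlim \Ohat_\rho/\mathfrak m^n$ with each quotient finite-dimensional, the filtrations $W$ and $F$ assemble into a projective system of genuine MHS compatible with the multiplication, and this is the asserted MHS on $\Ohat_\rho$ over $\kk$.
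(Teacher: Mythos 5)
Your overall frame --- Goldman--Millson to replace $\Def_\rho$ by $\Def_{L,\epsilon_x}$, homotopy transfer of an $L_\infty$ structure to $H(L)$, pro-representability by power series on $H^1$ modulo a Kuranishi ideal valued in $H^2$, and assembly as a projective limit of finite-dimensional MHS --- is consistent with the paper's philosophy and with \S\ref{section:applications}. The genuine gap is exactly the step you yourself single out as the heart: arranging that the transferred operations are morphisms of MHS by choosing contractions on $L_\kk$ and on $L_\CC$ ``compatibly across the comparison quasi-isomorphisms''. The two components of a mixed Hodge diagram are related only by a zigzag of filtered quasi-isomorphisms, not by isomorphisms; a contraction cannot be transported along such a zigzag, and independently chosen contractions produce transferred $L_\infty$ structures on $H(L_\kk)\otimes\CC$ and on $H(L_\CC)$ that agree only up to an $L_\infty$ isomorphism with non-trivial higher components, not on the nose --- so ``double compatibility'' is not a choice you get to make. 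There are also two filtration-level obstructions: the differential of $L_\kk$ is not strictly compatible with $W$ (the $W$-spectral sequence of a mixed Hodge complex degenerates at $E_2$, not $E_1$), so a $W$-compatible splitting $L^n=A^n\oplus K^n\oplus B^n$ inducing the correct weight filtration on cohomology need not exist; and the Deligne bigrading $A^{p,q}$ you invoke to pick the contraction on $L_\CC$ is defined for mixed Hodge \emph{structures}, whereas the chain-level $(L_\CC,W,F)$ is not a MHS, so that choice does not typecheck at the cochain level. That your key lemma is unavailable is in fact recorded in the paper itself: the Remark following Theorem~\ref{theorem:weight-grading} states that a homotopy transfer theorem for mixed Hodge diagrams, in which the transferred $\ell_r$ would be morphisms of MHS, is only work in progress; what is actually proved (Theorem~\ref{theorem:weight-grading}) is the strictly weaker statement that the $\ell_r$ respect a $\kk$-splitting of $W$ alone, and even this requires the detour through Cirici--Horel $E_1$-formality, replacing $L$ by $\mathscr{M}=E_1^W(L)$, precisely because direct filtered transfer on $L$ is problematic.

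Note also that the statement you were asked to prove is quoted from the author's previous work, whose construction proceeds differently: one applies canonical functorial formulas, compatible with the notion of mixed Hodge complex in Deligne's sense (of bar/Chevalley--Eilenberg type, applied to the augmented diagram $(L,\epsilon_x)$ itself), and the MHS on $\Ohat_\rho$ then comes out of Deligne's theorem on the cohomology of mixed Hodge complexes; invariance under quasi-isomorphism and functoriality in $(L,\epsilon_x)$ are built in because these formulas send quasi-isomorphisms of diagrams to quasi-isomorphisms. In particular, the MHS-morphism property of the individual transferred operations is never needed there, and is never established. Your closing steps --- forming ideals and quotients inside the abelian category of MHS using strictness, and taking the pro-MHS limit $\Ohat_\rho=\lim_{\leftarrow}\Ohat_\rho/\mathfrak{m}^n$ --- would be fine \emph{granting} the transfer step, but as written your proof rests on an unproved, and at the time of this paper open, lemma, so it does not go through.
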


\begin{remark}
The proof in loc.\ cit.\ requires a somewhat unnatural hypothesis of non-negative weights on $\mathfrak{g}$ (\cite[Theorem~8.4]{Lefevre2}) which is satisfied here only for VHS. It is needed in order to show that the mapping cone $C$ of $\epsilon$ is a mixed Hodge diagram of $L_\infty$ algebras for the definition given there of those objects; the second step is to perform the \emph{bar construction} on $C$ which is a DG coalgebra and show that this is again a MHC. However:
\begin{enumerate}
\item Even without hypothesis on the weights of $\mathfrak{g}$, one can still show that the bar construction is a MHC, without claiming that the $L_\infty$ structure of $C$ (see below) is compatible with the weight filtration,
\item In future work \cite{Lefevre4} we will give another proof of this which holds without such hypotheses,
\item The rest of the present work doesn't really depend on the existence of a MHS on $\Ohat_\rho$, but only on the data of $L$ and its augmentation.
\end{enumerate}
\end{remark}

Combining this with the constructions through \S~\ref{section:non-degenerate}--\ref{section:MHM}, we deduce:

\begin{theorem}
\label{theorem:MHS-functorial}
Let $\rho$ be the monodromy representation of an admissible VMHS $\mathbb{V}$ over $\kk$ over a smooth algebraic variety or a quasi-Kähler manifold $X$ with unipotent monodromy at infinity. Then there is a MHS over $\kk$ on $\Ohat_\rho$ that is functorial in the data of $X, x, \rho$.
\end{theorem}

In the algebraic case, we have to prove the independance on the compactification.
In the quasi-Kähler case, we assume that part of the data is an equivalence class of compactifications of $X$ where $\mathbb{V}$ is admissible and has unipotent monodromy at infinity, where two compactifications are equivalent if they are dominated by a third one; and morphisms are required to extend to the compactifications. In both cases, the condition of unipotent monodromy at infinity is independent on the compactification.

\begin{proof}[Proof of Theorem~\ref{theorem:MHS-functorial}]
Consider a compactification $X\subset\overline{X}$ by a divisor with normal crossings, and such that $\mathbb{V}$ has unipotent monodromy around $D$.

If $\rho$ is the monodromy of the VMHS $\mathbb{V}$ over $X$, then $\ad_\rho\subset\End(\mathbb{V})$ is again the monodromy of a VMHS by linear algebra. It is again admissible if $\rho$ is (\cite[Appendix]{SteenbrinkZucker}) and has again unipotent monodromy.

Then by the constructions of this article we get the MHC of sheaves $\operatorname{\mathcal{MHC}}(X,\overline{X},\ad_\rho)$ and then $\operatorname{MHC}(X,\overline{X},\ad_\rho)$ by applying $R_\TW(\overline{X},-)$, and this is a mixed Hodge diagram of Lie algebras, as explained in \S~\ref{subsection:application-Hodge} and in \S~\ref{subsection:results}. It is defined over the same field~$\kk$ as~$\mathbb{V}$. We also need the augmentation $\epsilon$ to be a morphism of mixed Hodge diagrams, when $\mathfrak{g}$ ($=(\ad_\rho)_x$) is considered as a mixed Hodge diagram concentrated in degree zero. This is obvious, indeed the augmentation is the morphism of MHC induced by the map $x:*\rightarrow X$. Hence we can apply Theorem~\ref{theorem:main-construction-previous-work} and get a MHS over $\kk$ on~$\Ohat_\rho$.

This MHS is functorial in $\mathbb{V}$ and $x\in X$ on one side, and on the data $(X,\overline{X})$ on another side. But by usual arguments this implies the independence on the compactification: a morphism
\begin{equation*}
f : (X, \overline{X}^2) \longrightarrow (X, \overline{X}^1)
\end{equation*}
induces a quasi-isomorphism
\begin{equation*}
\operatorname{MHC}(X, \overline{X}^2, \ad_\rho) \longrightarrow \operatorname{MHC}(X, \overline{X}^1, \ad_\rho)
\end{equation*}
so the MHS's that these define on $\Ohat_{\rho}$ are the same; and two compactifications with local unipotent monodromy are dominated by a third one (in the algebraic case this is a theorem, in the quasi-Kähler case this is included as part of the data).

Let us also explain why the local unipotent monodromy hypothesis is independent on the compactifications. It can be reformulated as : every point of $\overline{X}$ has a basis of neighborhoods $U$ such that the Zariski closure of the image of $\pi_1 (X \cap U)$ in the monodromy group of $\mathbb{V}$ is an abelian unipotent algebraic subgroup. Given the two compactifications $\overline{X}^1$ and $\overline{X}^2$, we can then show that $\mathbb{V}$ has unipotent monodromy around $D_1 := \overline{X}^1 \setminus X$ if and only if it has unipotent monodromy around $D_2 := \overline{X}^1 \setminus X$. Going from $\overline{X}^1$ to $\overline{X}^2$ is clear by pulling back; conversely, if $\mathbb{V}$ has unipotent monodromy around $D_2$ then the monodromy around $D_1$ is generated by small commuting loops that are images by $f$ of loops around $D_2$ that all generate an abelian unipotent subgroup, so these all together generate and abelian unipotent subgroup.

And then, since morphisms extend to compactifications, this implies the functoriality in $X,x$ only.
\end{proof}

\begin{remark}
In the cases of our \S~\ref{section:non-degenerate} and \S~\ref{section:curves} (non-degenerate VMHS or curves, not using mixed Hodge modules) we can also talk about MHS and MHC over the field $\CC$ of Remark~\ref{remark:C-MHS} and the above Theorem is still true over $\CC$. When using mixed Hodge modules, it seems that this also works as explained in~\cite[\S~3.2]{SabbahDettweiler}.
\end{remark}

\subsection{Deformation functor of DG Lie and $L_\infty$ algebras}
We are going to describe the consequence of $L$ being a mixed Hodge diagram of Lie algebras, with restrictions on the possible weights on cohomology, to its deformation functor.

To have a better understanding of the deformation functor of DG Lie algebras, we will work with the much more powerful $L_\infty$ algebras. Briefly, a $L_\infty$ algebra is given by a graded vector space $L$ with anti-symmetric operations in $r$ variables of degree $r-2$
\begin{equation*}
\ell_r : L^{\otimes r} \longrightarrow r, \quad r\geq 1
\end{equation*}
satisfying an infinite list of axioms. Among these, $\ell_1$ is a differential $d$ on $L$ and $\ell_2$ behaves likes a Lie bracket, for which $d$ is a derivation, except that it satisfies the Jacobi identity only \emph{up to homotopy} given by $\ell_3$, and $\ell_3$ itself satisfies higher order relations. $L_\infty$ algebras enjoy the following very nice properties:
\begin{enumerate}
\item $L_\infty$ algebras with $\ell_r=0$ for all $r\geq 3$ are the same as DG Lie algebras.
\item \label{item:transfer-structure-Linf} If $L$ is a DG Lie algebra then its cohomology $H(L)$ comes equipped with a structure of $L_\infty$ algebra such that $L$ and $H(L)$ become quasi-isomorphic in the sense of $L_\infty$ algebras. This is called a \emph{homotopy transfer of structure} from $L$ to $H(L)$.
\item As a consequence, a DG Lie algebra $L$ is \emph{formal} (i.e.\ quasi-isomorphic to its cohomology) if and only if there exists a homotopy transfer of structure to $H(L)$ with $\ell_r=0$ for all $r\neq 2$ ($\ell_1$ is always zero on $H(L)$ since it is induced by the differential).
\end{enumerate}
See the lectures of Manetti \cite{ManettiLectures} and the book of Loday-Vallette \cite{LodayVallette} for much more motivation for $L_\infty$ algebras.

Such a $L$ also has a deformation functor on $\Art$ which is given by
\begin{equation}
\Def_L:(A,\mathfrak{m}_A) \longmapsto \Biggl\{ \omega\in L^1\otimes\mathfrak{m}_A \Biggm|
0=\sum_{r\geq 1} \frac{\ell_r(\omega,\dots,\omega)}{r!} \in L^2\otimes\mathfrak{m}_A \Biggr\} / \sim
\end{equation}
where $\sim$ is the homotopy equivalence relation: $\omega_0 \sim \omega_1$ if there exists an element
\begin{equation*}
\gamma\in \mathfrak{m}_A \otimes \kk[t,dt]
\end{equation*}
(where $\kk[t,dt]$ is the DG algebra of polynomial $1$-form on the affine line) that satisfies the above equation in $L\otimes \mathfrak{m}_A \otimes \kk[t,dt]$ and such that $\gamma(t=0)=\omega_0$ and $\gamma(t=1)=\omega_1$. Again it is invariant under quasi-isomorphisms. If $\ell_r=0$ for $r\geq 3$ we recover the previous deformation functor.

\subsection{Homotopy transfer of structure}
\label{homotopy-transfer-structure}
We need to recall briefly how the homotopy transfer of structure works.

Let $L$ be an $L_\infty$ algebra with operations denoted by $\mu_r$. This includes the case of DG Lie algebras with $\mu_r=0$ for $r>2$. What we need to choose is a decomposition in each degree
\begin{equation}
L^n := A^n \oplus K^n \oplus B^n
\end{equation}
where $A^n$ is a complement to $\Ker(d^n)\subset L^n$ and $B^n$ is a complement to $\Im(d^{n-1})\subset\Ker(d^n)$. Then $K^n$ forms a space of representatives for the cohomology $H^n(L)$. This also determines maps of complexes $i:K^\bullet\rightarrow L^\bullet$ (inclusion), $p:L^\bullet\rightarrow K^\bullet$ (projection), and $h:L^\bullet\rightarrow L^{\bullet-1}$ (homotopy) where $h$ is given by the inverse of the isomorphism $A^{\bullet-1}\rightarrow B^\bullet$ induced by $d$, extended by $0$ to $A\oplus K$. These maps satisfy $pi=\id_K$ and $\id_L-ip=dh-hd$, so $L$ deformation retracts onto $K$.

Once such a choice is a made, it determines operations $\ell_r$ on $K\simeq H(L)$ with $\ell_1=0$ and such that $L$ and $H(L)$ become quasi-isomorphic in the sense of $L_\infty$ algebras via $i$ and $p$.

One way to describe $\ell_r$ is via the set $RT_r$ of rooted trees with $r$ leaves, that is, trees that when written vertically, have $r$ leaves thought of as input data and one leave as output data, with internal vertices also presented vertically with at least two input edges and one output edge. Such a tree $T$ corresponds to a composition scheme for a sequence $(\mu_n)$ of operations in $n$ variables, by plugging one operation $\mu_n$ in an internal vertex with $n$ inputs. One can also label the edges by linear maps. The formula for $\ell_r$ (see \cite[\S~10.3.4]{LodayVallette}) is then the sum over all trees $T\in RT_r$ of the following operations in $r$ variables: apply the maps $i$ on the input leaves of $T$, maps $\mu_n$ on the internal vertices with $n$ inputs, maps $h$ on the internal edges, and a map $p$ on the last output edge.

\subsection{Higher operations and weights}
Let us explain how powerful this point of view is. Because of the homotopy transfer of structure and the invariance by quasi-isomorphism of the deformation functor, then the deformation functor of any DG Lie algebra $L$ can be written directly in $H(L)$. We will work in cases where $L$ has $H^n(L)=0$ for $n\leq 0$, so the homotopy relation in the formula for $\Def_{H(L)}$ is trivial, and the other terms $H^n(L)$ are finite-dimensional. Hence the formula for $\Def_{H(L)}$ gives us directly a complete local algebra pro-representing $\Def_L$: it is given as the quotient of the power series on $H^1(L)$ by the equations $0=\sum_{r\geq 2} \ell_r(x,\dots,x)/r!$ seen as power series with values in $H^2(L)$.

If furthermore one can show that only finitely many of the operations $\ell_r:H^1(L)^{\otimes r}\rightarrow H^2(L)$ are non-vanishing, then the formula for $\Def_{H(L)}$ gives a finite presentation of this complete local algebra: it is the completion of the local ring of the germ at $0$ inside $H^1(L)$ defined by a finite number of polynomial equations. For brevity we will simply say that the deformation functor is given by a finite number of polynomial equation.

Now we will use this combined with the existence of weights on $H(L)$.

\begin{theorem}
\label{theorem:weight-grading}
Let $L$ be a mixed Hodge diagram of Lie algebras over $\kk$. Then on $H(L)$ there is an extra grading $H_\bullet(L)$ over $\kk$ that splits the weight filtration, i.e.\
\begin{equation*}
W_k H^n(L)=\bigoplus_{i\leq k} H^n_i(L) ,
\end{equation*}
and there are induced higher operations $\ell_r$ that all respect this grading, i.e.\
\begin{equation*}
\ell_r(H^{n_1}_{k_1}(L),\dots, H^{n_r}_{k_r}(L)) \subset H^{n_1+\cdots+n_r-2}_{k_1+\cdots+k_r}(L) .
\end{equation*}
\end{theorem}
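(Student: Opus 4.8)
The plan is to split the argument into two independent parts: constructing the grading that splits $W$, and checking that the transferred operations $\ell_r$ respect it. The second part reduces to a single statement, namely that each $\ell_r$ is a morphism of mixed Hodge structures from the tensor-product MHS $(H(L))^{\otimes r}$ (with its tensor-product weight filtration) to $H(L)$; for such a morphism the weights add, which is exactly the asserted additivity $\ell_r(H^{n_1}_{k_1},\dots,H^{n_r}_{k_r})\subset H^{\bullet}_{k_1+\cdots+k_r}$ once a functorial weight-grading has been fixed.

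So I would first prove that the $\ell_r$ are morphisms of MHS. For $r=2$ this is built into the hypothesis: the bracket $\mu_2$ on $L$ is a map of mixed Hodge diagrams of Lie algebras, hence strictly compatible with $(W,F)$, and therefore so is the induced bracket on cohomology. For $r\geq 3$ I would run the homotopy transfer of \S\ref{homotopy-transfer-structure}, but with the auxiliary data chosen compatibly with the bifiltration: one selects the decomposition $L^n=A^n\oplus K^n\oplus B^n$ and the resulting maps $i$, $p$, $h$ to be filtered for $(W,F)$. Since the tree-summation formula for $\ell_r$ is a composite of $i$, $p$, $h$ and of $\mu_2$, all of which are then compatible with $(W,F)$ and defined over $\kk$, the operation $\ell_r$ induced on $K\simeq H(L)$ is strictly compatible with $W$ and $F$, i.e.\ a morphism of MHS.

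For the grading itself I would use the canonical functorial splitting of a MHS over $\CC$ by the Deligne subspaces $I^{p,q}$, setting $H^n_k:=\bigoplus_{p+q=k}I^{p,q}$ inside $H^n(L)_\CC$: this splits $W$, and because the splitting is functorial and compatible with tensor products, every multilinear morphism of MHS is automatically graded for $H^\bullet_\bullet$ in the exact additive sense above. This already proves the statement over $\CC$. To descend the grading to $\kk$ when $\kk\subsetneq\CC$, I would observe that the $\kk$-splittings of $W$ making all the (finitely many relevant) $\ell_r$ simultaneously graded form a torsor under a pro-unipotent group defined over $\kk$; the Deligne grading gives a point over $\CC$, and since $H^1(\kk,\mathbb{G}_a)=0$ a torsor under a pro-unipotent group over $\kk$ is trivial, so a $\kk$-rational splitting exists.

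The main obstacle is the compatibility of the transfer data with the weight filtration. The differential of a mixed Hodge complex is strictly compatible with $F$ (this is part of the pure Hodge complex axioms on the $\Gr^W$ pieces), but it is compatible with $W$ only up to the $E_2$-degeneration of the weight spectral sequence, so one cannot split $W$ at the chain level by naive harmonic representatives. This is precisely the technical heart of the construction of the MHS on $H(L)$ in \cite{Lefevre2}: the $E_2$-degeneration, after the décalage shift, still yields bifiltered transfer data inducing the correct $(W,F)$ on $H(L)$. Granting this Hodge-theoretic homotopy transfer, where all the analytic input sits, the two parts above combine to give the grading together with the additivity of weights under each $\ell_r$.
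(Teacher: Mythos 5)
Your proposal does not prove the theorem: the step you explicitly ``grant'' at the end (a bifiltered homotopy transfer making every $\ell_r$ a morphism of mixed Hodge structures) is the whole difficulty, and it is precisely the statement that the paper flags, in the remark following Theorem~\ref{theorem:weight-grading}, as open and in progress (Cirici). It also does not follow from ``choosing $i,p,h$ filtered for $(W,F)$''. First, such a choice need not exist: the differential of a mixed Hodge complex is strictly compatible with $F$ but \emph{not} with $W$ (the $W$-spectral sequence degenerates at $E_2$, not $E_1$), so no decomposition $L^n=A^n\oplus K^n\oplus B^n$ splitting $W$ compatibly with $d$ is available. Second, even granting it, the bookkeeping comes out wrong: the MHS weight on $H^n(L)$ is the chain-level weight shifted by the degree $n$, while the homotopy $h$ has degree $-1$; so $W$-filtered transfer data would give $\ell_r$ landing in MHS weight at most $k_1+\cdots+k_r-(r-2)$, i.e.\ the weight-additive components of the higher operations would be forced to vanish for $r\geq 3$ --- contradicting, e.g., the nonzero $\ell_3,\ell_4$ in the Kapovich--Millson situation. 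Repairing this requires at least replacing $W$ by $\Dec W$ (for which $d$ \emph{is} strict and which is still multiplicative), but that only yields $W$-\emph{filtered} compatibility over $\kk$, on a component of the diagram where $F$ does not live; assembling both filtrations into genuine MHS-morphisms $\ell_r$, which your Deligne-splitting step needs, is exactly the unproven heart. Your final descent-to-$\kk$ argument is also unsubstantiated: you have a $\CC$-point only if the $\ell_r$ are MHS-morphisms, and the transitivity needed for the set of operation-compatible splittings to be a torsor under a unipotent $\kk$-group is not established.

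The paper's proof takes a route that avoids all of this. By the $E_1$-formality theorem of Cirici--Horel \cite{CiriciHorel}, which is symmetric monoidal and hence applies to mixed Hodge diagrams of Lie algebras, $(L,W)$ is quasi-isomorphic \emph{over $\kk$} to the DG Lie algebra $\mathscr{M}=E_1^W(L)$, which carries an honest weight \emph{grading} at the cochain level; $E_2$-degeneration of the $W$-spectral sequence gives $H(\mathscr{M})=H(L)$ with the grading splitting the weight filtration. One then runs the homotopy transfer for $\mathscr{M}$ with splittings chosen compatibly with the grading (as in Budur--Rubi\'o \cite{BudurRubio}), which is unproblematic since gradings, unlike filtrations, split by direct sums; the transferred $\ell_r$ are then weight-additive by construction. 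No filtered transfer, no Deligne $I^{p,q}$, and no descent argument is needed, since everything happens over $\kk$ from the start. If you want to salvage your outline, you must either prove the Hodge-theoretic transfer theorem you assumed, or reorganize the argument through $E_1^W(L)$ as the paper does.
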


\begin{proof}
The first essential step is to use the theorem of Cirici-Horel \cite[Theorem~7.8, Theorem~8.2]{CiriciHorel} which shows that $(L,W)$, over $\kk$, is quasi-isomorphic to $\mathscr{M}:=E_1^W(L)$ (the first page of the $W$-spectral sequence). Since their theorem is formulated as a formality result for mixed Hodge complexes as a symmetric monoidal category, it holds as well for our DG Lie algebras. Since the $W$-spectral sequence degenerates at $E_2$, $H(\mathscr{M})=H(L)$, and this $\mathscr{M}$ already comes equipped with a weight grading $\mathscr{M}_\bullet$ that on cohomology splits the weight filtration of $H(L)$.

The second essential step is to show that the theorem on homotopy transfer of structure holds with an extra grading, see \cite[Corollary~5.6]{BudurRubio} for the case of commutative DG algebras. But it is clear that this also works for $L_\infty$ algebra since the splitting of $\mathscr{M}$ can be taken in a compatible way with its grading and then the maps constructed from the homotopy transfer of structure will respect this grading. So we get higher operations on $H(\mathscr{M})$ that respect the grading.
\end{proof}

Now this game of higher operations and weights allows us to show that many operations vanish automatically when they don't respect the weights, and this leads to a quite concrete description of $\Def_L$ in many cases.

\begin{theorem}
\label{theorem:weights-finite-presentation}
Assume that $L$ is a mixed Hodge diagram of Lie algebras with $H^n(L)=0$ for $n\leq 0$ and such that the weights of $H^1(L)$ are all strictly positive. Then the complete local algebra that pro-represents $\Def_L$ has a finite presentation with weights on the generators being the weights of $H^1(L)$ and with weighted-homogeneous relations with weights those of $H^2(L)$.
\end{theorem}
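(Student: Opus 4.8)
The plan is to combine the graded higher operations furnished by Theorem~\ref{theorem:weight-grading} with the explicit description of the pro-representing algebra in terms of the Maurer--Cartan equation. First I would record that, because $H^n(L)=0$ for $n\leq 0$, the homotopy equivalence entering $\Def_{H(L)}$ is trivial, so $\Def_L$ is pro-represented by the completed symmetric algebra on the dual of $H^1(L)$ --- that is, the power series ring $\kk[[X_1,\dots,X_n]]$ whose variables $X_1,\dots,X_n$ are coordinates dual to a basis $e_1,\dots,e_n$ of $H^1(L)$ --- modulo the ideal generated by the components of the Maurer--Cartan series $\sum_{r\geq 2}\ell_r(x,\dots,x)/r!$ taking values in $H^2(L)$. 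By Theorem~\ref{theorem:weight-grading} I may choose $e_1,\dots,e_n$ homogeneous for the weight grading, say $e_i$ of weight $w_i$, and assign the weight $w_i$ to the variable $X_i$; I likewise fix a homogeneous basis of $H^2(L)$, so that the Maurer--Cartan equation splits into one scalar equation per homogeneous basis vector of $H^2(L)$.

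The heart of the argument is then a weight count. Writing $x=\sum_i X_i e_i$ and expanding $\ell_r(x,\dots,x)=\sum_{i_1,\dots,i_r} X_{i_1}\cdots X_{i_r}\,\ell_r(e_{i_1},\dots,e_{i_r})$, the grading-compatibility of Theorem~\ref{theorem:weight-grading},
\begin{equation}
\ell_r\big(H^1_{w_{i_1}}(L),\dots,H^1_{w_{i_r}}(L)\big)\subset H^2_{w_{i_1}+\cdots+w_{i_r}}(L),
\end{equation}
shows that the contribution to the relation in a fixed weight $m$ is a sum of monomials $X_{i_1}\cdots X_{i_r}$ with $w_{i_1}+\cdots+w_{i_r}=m$, which are precisely the weight-$m$ homogeneous polynomials in the graded variables. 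Hence the equation read off from a weight-$m$ basis vector of $H^2(L)$ is weighted-homogeneous of weight $m$, so the relations have weights exactly those occurring in $H^2(L)$.

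Finiteness follows from the positivity hypothesis. Since every $w_i\geq 1$, any nonzero term $\ell_r(e_{i_1},\dots,e_{i_r})$ lands in weight $w_{i_1}+\cdots+w_{i_r}\geq r$; but this weight must lie among the finitely many weights of $H^2(L)$, so if $c$ denotes the top weight of $H^2(L)$ then $r\leq c$. Thus $\ell_r\colon H^1(L)^{\otimes r}\to H^2(L)$ vanishes for every $r>c$, the Maurer--Cartan series is an honest polynomial of degree at most $c$, and the presentation has finitely many generators and finitely many weighted-homogeneous polynomial relations.

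I expect the delicate point to be bookkeeping rather than substance: one must use the single splitting of Theorem~\ref{theorem:weight-grading} consistently both to weight the variables and to decompose $H^2(L)$, so that the Maurer--Cartan equation genuinely decomposes into homogeneous pieces indexed by the weights of $H^2(L)$. The strict positivity of the weights of $H^1(L)$ is indispensable and cannot be relaxed, since a weight-zero generator would let arbitrarily long products $X_{i_1}\cdots X_{i_r}$ remain in a bounded weight, destroying both the degree bound on the $\ell_r$ and the finiteness of the presentation.
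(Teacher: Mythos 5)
Your proposal is correct and takes essentially the same route as the paper's proof: both invoke Theorem~\ref{theorem:weight-grading} to get weight-graded transferred operations on $H(L)$, note that strict positivity of the weights of $H^1(L)$ forces $\ell_r\colon H^1(L)^{\otimes r}\to H^2(L)$ to land in weight at least $r$, and conclude from the finitely many weights of $H^2(L)$ that all $\ell_r$ vanish for $r$ large, so the Maurer--Cartan equations form a finite set of weighted-homogeneous polynomial relations. Your coordinate-level bookkeeping (homogeneous bases of $H^1(L)$ and $H^2(L)$, decomposing the relation by weight) simply makes explicit what the paper leaves implicit.
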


\begin{proof}
Because of the previous theorem, we compute the deformation functor in $H(L)$. The main point is that if $x_1,\dots,x_r \in H^1(L)$ have weight $k_1,\dots,k_r>0$ then $\ell_r(x_1,\dots,x_r)\in H^2(L)$ has weight $k_1+\cdots+k_r\geq r$. But $H^2(L)$, being finite-dimensional, has only finitely many weights. Hence for some $N$ big enough all the operations $\ell_r:H^1(L)^{\otimes r}\rightarrow H^2(L)$ vanish for $r>N$. The equations giving the deformation functor
\begin{equation*}
\frac{\ell_2(x,x)}{2}+\frac{\ell_3(x,x,x)}{3!}+\cdots+\frac{\ell_N(x,\dots,x)}{N!}=0
\end{equation*}
are really a finite number of polynomial equations with weights on the variable $x$ in $H^1(L)$ and the relations take values in $H^2(L)$ respecting those weights.
\end{proof}

\subsection{The augmentation}

To apply the above method with the isomorphism of functors of Goldman-Millson, we need to work with the augmented deformation functor $\Def_{L,\epsilon}$. In \cite[\S~5]{Lefevre2} we argue that it is actually the deformation functor of a canonical $L_\infty$ algebra structure on the (suspended) mapping cone $C$ of $\epsilon$ that is constructed and studied by Fiorenza-Manetti (\cite{FiorenzaManetti}). We will denote by $\mu_r$ ($r\geq 1$) these operations.

We write these formula directly in the form we need: $C$ has components
\begin{equation}
C^1=L^1\oplus\mathfrak{g}, \quad C^n=L^n \ (n\neq 1).
\end{equation}
The differential $\mu_1$ is induced by the one of $L$ with the exception of $d^1:C^0\rightarrow C^1$ given by
\begin{equation}
d^1(x) := (d(x), \epsilon(x)), \quad x\in L^0 .
\end{equation}
The operation $\mu_2$ is induced in a natural way by the Lie bracket, the only part needed to be described being $C^0\otimes C^1\rightarrow C^1$ given by
\begin{equation}
\mu_2(x, (y,v)) := \left([x,y], \frac{1}{2}\,[\epsilon(x),v]\right), \quad x\in L^0,y\in L^1, v\in\mathfrak{g} .
\end{equation}
Furthermore, there is one operation $\mu_r$ for $r\geq 3$ as follows. Applied to $r$ elements of $\mathfrak{g}$ and $k>1$ elements of $L$, $\mu_{r+k}$ is $0$. Else, applied to $r$ elements $u_1,\dots,u_r$ of $\mathfrak{g}$ and exactly one element $x$ of $L$ it is given by a formula of the type
\begin{equation}
\mu_{r+1}(u_1,\dots,u_r,x)=B_r\sum_\tau \pm [u_{\tau(1)}, [u_{\tau(2)},\dots,
[u_{\tau(r)}, \epsilon(x)] \dots ]]
\end{equation}
where the sum is over the symmetric group, with a constant $B_r$ and with signs in the sum. Since this has values in $\mathfrak{g}$, $\mu_{r+1}$ is a non-trivial higher operation $(C^1)^{\otimes r}\otimes C^0 \rightarrow C^1$ for all $r\geq 2$.

Then:
\begin{lemma}[{\cite[5.3]{Lefevre2}}]
For this $L_\infty$ algebra structure on $C$,
\begin{equation*}
\Def_{L,\epsilon} := \Def_C  .
\end{equation*}
\end{lemma}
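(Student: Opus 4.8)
The plan is to compute $\Def_C$ directly from the explicit $L_\infty$ operations $\mu_r$ recalled above and to match the result, term by term, with the Eyssidieux-Simpson description of $\Def_{L,\epsilon_x}$. The comparison naturally splits into an identification of the underlying Maurer-Cartan sets and an identification of the two equivalence relations.

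First I would analyze the Maurer-Cartan set of $C$. A Maurer-Cartan element is $\eta=(x,v)\in C^1\otimes\mathfrak{m}_A=(L^1\oplus\mathfrak{g})\otimes\mathfrak{m}_A$, and the equation $\sum_{r\geq 1}\mu_r(\eta,\dots,\eta)/r!=0$ takes values in $C^2\otimes\mathfrak{m}_A=L^2\otimes\mathfrak{m}_A$. The key observation is that among the operations with all inputs in $C^1$, only $\mu_1$ and $\mu_2$ have image in $C^2$: every higher operation $\mu_{r+1}$ requires exactly one input coming from $L$ with a nonzero $\epsilon$-value, hence one input in $C^0=L^0$ (indeed $\epsilon$ vanishes on $L^n$ for $n>0$), and moreover outputs into $\mathfrak{g}\subset C^1$. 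Since $\mu_1(x,v)=d(x)$ and $\mu_2((x,v),(x,v))=[x,x]$ (the $\mathfrak{g}$-parts have no room in $C^2$), the Maurer-Cartan equation reduces to $d(x)+\tfrac12[x,x]=0$, with $v\in\mathfrak{g}\otimes\mathfrak{m}_A$ entirely free. Using the bijection $\mathfrak{g}\otimes\mathfrak{m}_A\xrightarrow{\exp}\exp(\mathfrak{g}\otimes\mathfrak{m}_A)$, valid because $\mathfrak{m}_A$ is nilpotent, I identify $v$ with a group element $e^\alpha$; this puts the Maurer-Cartan set of $C$ in natural bijection with the set of pairs $(x,e^\alpha)$ occurring in $\Def_{L,\epsilon_x}$ before passing to the quotient.

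Next I would compare the two equivalence relations. The gauge flow of $\lambda\in C^0\otimes\mathfrak{m}_A=L^0\otimes\mathfrak{m}_A$ acting on $\eta=(x,v)$ is generated by the vector field $\sum_{n\geq 0}\mu_{n+1}(\lambda,\eta,\dots,\eta)/n!$. Projected onto the $L^1$-factor, only $\mu_1$ and $\mu_2$ contribute (the higher operations land in $\mathfrak{g}$), so this factor recovers exactly the usual gauge action $e^\lambda.x$ of $\exp(L^0\otimes\mathfrak{m}_A)$ on Maurer-Cartan elements of $L$. Projected onto the $\mathfrak{g}$-factor, the flow is governed by $\epsilon(\lambda)+\tfrac12[\epsilon(\lambda),v]+\sum_{n\geq 2}\tfrac{B_n}{n!}\,[v,[v,\dots,[v,\epsilon(\lambda)]\dots]]$, that is, precisely by the series carrying the Fiorenza-Manetti constants $B_n$.

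The main obstacle is to recognize this last series as the one that integrates to the right translation $e^\alpha\mapsto e^\alpha * e^{-\epsilon_x(\lambda)}$ of the Eyssidieux-Simpson action. Concretely, the constants $B_n$ are the Bernoulli-type coefficients governing the derivative of the exponential map, equivalently the trivialization of right-invariant vector fields on the nilpotent group $\exp(\mathfrak{g}\otimes\mathfrak{m}_A)$ through the Baker-Campbell-Hausdorff formula; this is exactly the content of the Fiorenza-Manetti computation \cite{FiorenzaManetti}, and once it is invoked the integrated flow on the $\mathfrak{g}$-factor is identified with $e^\alpha\mapsto e^\alpha*e^{-\epsilon_x(\lambda)}$. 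Combining the two projections shows that the $L_\infty$ gauge equivalence on $C$ coincides with the Eyssidieux-Simpson equivalence, so the two quotients agree; since all identifications are natural in $A\in\Art_\kk$, this yields the isomorphism $\Def_C\simeq\Def_{L,\epsilon_x}$.
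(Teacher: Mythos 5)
Your proposal is correct and follows essentially the same route as the paper, which states this lemma without its own proof by citing \cite{Lefevre2} and Fiorenza--Manetti: reducing the Maurer--Cartan equation to $d(x)+\tfrac12[x,x]=0$ with the $\mathfrak{g}$-coordinate free (because the higher $\mu_{r+1}$ need an input in $C^0=L^0$, as $\epsilon$ vanishes in positive degrees, and output into $\mathfrak{g}\subset C^1$), and then matching the quotient with the Eyssidieux--Simpson action via the Bernoulli-type series, is exactly the content of the cited computation. The one point you rightly delegate to \cite{FiorenzaManetti} is that the $L_\infty$ homotopy relation on $\MC(C\otimes\mathfrak{m}_A)$ is generated by the gauge flows you integrate, which is also where the constants $B_r$ and the sign in $e^\alpha * e^{-\epsilon_x(\lambda)}$ are pinned down.
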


We are in situations where $\epsilon$ is surjective and the induced $\epsilon$ on $H^0(L)$ is injective. Hence $H^0(C)=0$. Let us recall also that there is a long exact sequence for the mapping cone, which reduces here to the short exact sequence
\begin{equation}
0\longrightarrow \mathfrak{g}/\epsilon(H^0(L)) \longrightarrow H^1(C) \longrightarrow
H^1(L) \longrightarrow 0
\end{equation}
and of course $H^n(C)=H^n(L)$ for $n\neq 0,1$. If $L$ is an augmented mixed Hodge diagram of Lie algebras, then the above sequence is a short exact sequence of MHS.

With all of this, we are ready to state our theorem, first without Hodge theory.

\begin{theorem}
Let $\epsilon:L\rightarrow\mathfrak{g}$ be an augmented DG Lie algebra, let $C$ be the cone as above with its structure of $L_\infty$ algebra, assuming that $\epsilon$ is surjective on $L^0$ and that the induced $\epsilon$ on $H^0(L)$ is injective. Then there exist a transferred structure of $L_\infty$ algebra on $H(C)$ given by operations $\ell'_r$, and a transferred structure of $L_\infty$ algebra on $H(L)$ given by operations $\ell_r$, such that the deformation functor of $H(C)$ can be written as a product
\begin{equation*}
\Def_{H(C)}=\Def_{H(L)}\times (\mathfrak{g}/\epsilon(H^0(L))),
\end{equation*}
in other words for $(A,\mathfrak{m}_A)\in \Art$
\begin{equation*}
\Def_{H(C)}(A)=
\Biggl\{ (x,t)\in \big(H^1(L)\oplus (\mathfrak{g}/\epsilon(H^0(L)))\big)\otimes \mathfrak{m}_A \Biggm|
0=\sum_{r\geq 2} \frac{\ell_r(x,\dots,x)}{r!} \Biggr\} .
\end{equation*}
\end{theorem}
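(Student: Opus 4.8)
The plan is to exploit the strict $L_\infty$-morphism that collapses the extra cone direction. First I would observe that the projection $q\colon C\to L$ which is the identity in every degree $\neq 1$ and sends $C^1=L^1\oplus\mathfrak g$ onto $L^1$ (killing $\mathfrak g$) is a strict morphism of $L_\infty$ algebras: it is a chain map by the formula for $\mu_1$, it intertwines $\mu_2$ with the Lie bracket of $L$ because the only part of $\mu_2$ landing outside $L$ is the $\mathfrak g$-valued term $\tfrac12[\epsilon(x),v]$ which $q$ annihilates, and it kills every higher $\mu_r$ ($r\ge 3$) since all of these exceptional operations take values in $\mathfrak g\subset C^1$. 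On cohomology $q$ induces the surjection $\bar q\colon H^1(C)\twoheadrightarrow H^1(L)$ of the short exact sequence, an isomorphism $H^n(C)\xrightarrow{\sim}H^n(L)$ for $n\ge 2$, and $H^0(C)=0$.

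Next I would set up compatible homotopy data. Fix any splitting $(i_L,p_L,h_L)$ for $L$ with $L^n=A_L^n\oplus K_L^n\oplus B_L^n$, and refine it to a splitting for $C$: in degrees $\neq 1$ take the same subspaces (consistent, since the cocycles and coboundaries of $C$ agree with those of $L$ for $n\ge 2$, and $H^0(C)=0$ forces $A_C^0=C^0$), while in degree $1$ choose a complement $\mathfrak s\subset\mathfrak g$ to $\epsilon(H^0(L))$ and set $K_C^1:=K_L^1\oplus\mathfrak s$, $A_C^1:=A_L^1$. A dimension count shows $K_C^1$ is a space of representatives for $H^1(C)$, with $K_L^1$ lifting $H^1(L)$ and $\mathfrak s\cong\mathfrak g/\epsilon(H^0(L))=:\mathfrak t$. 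With these choices $q$ intertwines the structure maps in the only degrees that will matter: $q\,i_C=i_L\,\bar q$ in degree $1$, $q\,h_C=h_L\,q$ in degree $2$ (both homotopies land in $A_L^1\subset L^1$, where $q$ is the identity), and $\bar q\,p_C=p_L\,q$ in degree $2$.

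Then I would run the tree formula for the transferred operations, restricted to what the Maurer--Cartan equation sees, namely $\ell'_r\colon (H^1(C))^{\otimes r}\to H^2(C)$. The key degree count is that in any tree contributing to such an operation all leaves carry degree $1$, every internal vertex is a $\mu_2$ (the only operation producing a degree-$2$ output from degree-$1$ inputs), and every internal edge, being $h_C$ applied to a degree-$2$ element, again carries degree $1$. In particular no degree-$0$ element is ever produced, so the exceptional higher operations $\mu_{r+1}$—which require exactly one $C^0$-input—never fire. Since $q$ intertwines $i_C,\mu_2,h_C,p_C$ in degrees $1$ and $2$ and $\bar q$ is the identity on $H^2(C)=H^2(L)$, applying $\bar q$ termwise along each tree yields
\[
\ell'_r\big((x_1,t_1),\dots,(x_r,t_r)\big)=\ell_r(x_1,\dots,x_r)\in H^2(L),
\]
independent of the $t_i$. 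Hence the Maurer--Cartan equation for $\xi=(x,t)$ reduces to $\sum_{r\ge 2}\ell_r(x^{\otimes r})/r!=0$, the variable $t$ is unconstrained, and because $H^0(C)=0$ the gauge relation is trivial; this gives the asserted product $\Def_{H(C)}=\Def_{H(L)}\times\mathfrak t$.

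The hard part is the interface between the two transfers in low degrees. The splittings on $C$ and $L$ genuinely disagree in degree $0$—the cone differential $d_C^0=(d,\epsilon)$ is injective whereas $d_L^0$ has kernel $H^0(L)$, so $p_L q\neq\bar q p_C$ there—and this is exactly where the nontrivial extension class $\epsilon$ of the cone lives. The point of the degree argument above is to show that this discrepancy is invisible to the operations $\ell'_r$ feeding the deformation equation: those trees live entirely in degrees $1$ and $2$ and use only $\mu_2$, never touching $C^0$ nor the exceptional operations, so the compatibility we do have on degrees $1,2$ suffices. Verifying that $q$ is genuinely strict (so that the exceptional operations are all $\mathfrak g$-valued and hence killed) and that the chosen degree-$1$ complement makes $K_C^1$ a valid set of representatives are the two computations to carry out in detail.
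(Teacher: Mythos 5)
Your proposal is correct and takes essentially the same route as the paper's proof: the same refined splitting of $C^1$ as $A^1\oplus(K^1\oplus\mathfrak{t})\oplus(\text{coboundaries})$, and the same tree-formula argument that on degree-$1$ inputs every contributing vertex is a $\mu_2$ which ignores the $\mathfrak{g}$-components, while the exceptional operations $\mu_{r+1}$, requiring a $C^0$-input, never fire. Your strict projection $q\colon C\to L$ with its intertwining relations, and your verification that $K^1_L\oplus\mathfrak{s}$ genuinely represents $H^1(C)$, are helpful scaffolding that the paper leaves implicit, but the underlying argument is the same.
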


\begin{proof}
For this we prepare a homotopy transfer of structure for $L$, so we choose a decomposition
\begin{equation}
L^n := A^n \oplus K^n \oplus B^n
\end{equation}
where $K$ forms a space of representatives for the cohomology. This determines again the maps $i,p,h$ and higher operations $\ell_r$ on $K\simeq H(L)$.

Then we choose such a decomposition for $C$ simply by choosing a subspace $\mathfrak{t}\subset\mathfrak{g}$ complement to $\epsilon(H^0(L))$. This gives a splitting of $C$ with
\begin{equation}
C^1 := A^1 \oplus (K^1\oplus\mathfrak{t}) \oplus (B^1 \oplus\epsilon(H^0(L))) .
\end{equation}
and determines again maps $i',p',h'$ (closely related to $i,p,h$) and higher operations $\ell'_r$ in $H(C)$ with
\begin{equation}
H^1(C)\simeq K^1\oplus\mathfrak{t} \simeq H^1(L)\oplus (\mathfrak{g}/\epsilon(H^0(L))) .
\end{equation}

What we then want to show is that the only non-zero operations $\ell'_r:H^1(C)^{\otimes r}\rightarrow H^2(C)=H^2(L)$ are the one induced from $\ell_r$, i.e.\ that
\begin{equation}
\text{for}\ y=(x,t)\in K^1 \oplus\mathfrak{t},\quad \ell'_r(y,\dots,y)=\ell_r(x,\dots,x).
\end{equation}
This gives directly the above form of the deformation functor.
Such a fact is already clear for $r=1$ ($\ell'_1$ is always zero on cohomology) and for $r=2$ since $\ell'_2$ is induced by $\mu_2$ on cohomology and $\mu_2$ satisfies this.

For higher $r$ we have to understand in more detail what happens in the homotopy transfer of structure for $C$. We take $r$ elements $y_j=x_j+t_j\in K^1\oplus\mathfrak{t}$ and want to compute $\ell'(y_1,\dots,y_r)$. Let $T$ be a planar rooted tree as in \S~\ref{homotopy-transfer-structure} with $r$ leaves. We first apply apply a map $i'$ to some $n$ elements $y_{j_1},\dots,y_{j_n}$ then apply $\mu_n$. If $n=2$ then what happens is as before, $\mu_2(i(y_{j_1}),i(y_{j_2}))$ equals $\mu_2(i(x_{j_1}),i(x_{j_2}))$ by definition of $\mu_2$. If $n>2$ then $\mu_n(i(y_{j_1}),\dots,i(y_{j_n}))$ is zero simply because there is no such higher operation on $C$ (the elements $y_{j_1},\dots,y_{j_n}$ are all of degree $1$). Hence we see that the formula for computing $\ell'(y_1,\dots,y_r)$ is the same as the one for computing $\ell(y_1,\dots,y_r)$.
\end{proof}

\begin{remark}
If $L$ is formal above $\mathfrak{g}$, then the above computations show that $\Def_L$ reduces to the product of the equation $[x,x]=0$ in $H^1(L)$ and the vector space $\mathfrak{g}/\epsilon(H^0(L))$. Thus we recover completely the result of Goldman-Millson \cite[Theorem~3.5]{GoldmanMillson}, purely by methods of $L_\infty$ algebras and without invoking the operation that they denote by $\bowtie$ (homotopy fiber product of groupoids \cite[\S~2.1.1]{EyssidieuxSimpson} used to define $\Def_{L,\epsilon}$ from $\Def_L$).
\end{remark}

In geometric situations, this is what we get.

\begin{corollary}
Let $\epsilon:L\rightarrow\mathfrak{g}$ be an augmented mixed Hodge diagram of Lie algebras, with $H^n(L)=0$ for $n<0$ and
$\epsilon$ injective on $H^0(L)$. Assume that all the weights of $H^1(L)$ are strictly positive. Then again $\Def_{L,\epsilon}$ has a finite presentation, which is the product of the one of Theorem~\ref{theorem:weights-finite-presentation} with the vector space $\mathfrak{g}/\epsilon(H^0(L))$.
\end{corollary}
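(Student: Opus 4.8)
The plan is to deduce the statement by transporting the finiteness-from-weights argument of Theorem~\ref{theorem:weights-finite-presentation} across the identifications $\Def_{L,\epsilon}=\Def_C$ (Lemma~\cite[5.3]{Lefevre2}) and $\Def_C\simeq\Def_{H(C)}$, and then feeding in the product decomposition established in the preceding theorem. First I would record the reduction to cohomology: by the cited lemma together with the invariance of the deformation functor under $L_\infty$ quasi-isomorphism and the homotopy transfer of structure, $\Def_{L,\epsilon}=\Def_C\simeq\Def_{H(C)}$. Because $\epsilon$ is surjective on $L^0$ and injective on $H^0(L)$ one has $H^0(C)=0$, so together with the hypothesis $H^n(L)=0$ for $n<0$ this yields $H^n(C)=0$ for $n\le 0$; hence the homotopy relation defining $\Def_{H(C)}$ is trivial and $\Def_{H(C)}$ is genuinely pro-represented by a complete local algebra presented by polynomial equations in $H^1(C)$ with values in $H^2(C)$.

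Next I would invoke the preceding theorem, which produces transferred operations $\ell'_r$ on $H(C)$ and $\ell_r$ on $H(L)$ and shows that, writing $y=(x,t)\in H^1(L)\oplus\mathfrak{t}$ with $\mathfrak{t}\cong\mathfrak{g}/\epsilon(H^0(L))$, one has $\ell'_r(y,\dots,y)=\ell_r(x,\dots,x)$. Thus the equations cutting out $\Def_{H(C)}$ depend only on the coordinate $x$ and take values in $H^2(C)=H^2(L)$, giving
\[
\Def_{H(C)}=\Def_{H(L)}\times\big(\mathfrak{g}/\epsilon(H^0(L))\big),
\]
with the second factor an unconstrained affine space of generators. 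It then remains only to show that the $\Def_{H(L)}$ factor carries the finite weighted-homogeneous presentation of Theorem~\ref{theorem:weights-finite-presentation}. For this I apply Theorem~\ref{theorem:weight-grading} to $L$, which is a mixed Hodge diagram of Lie algebras, to obtain a grading on $H(L)$ splitting the weight filtration and respected by the $\ell_r$. Since every weight of $H^1(L)$ is strictly positive, a degree-$r$ monomial $\ell_r(x,\dots,x)$ has weight $\ge r$, while the finite-dimensional space $H^2(L)$ carries only finitely many weights; hence $\ell_r\colon H^1(L)^{\otimes r}\to H^2(L)$ vanishes for all large $r$, exactly as in the proof of Theorem~\ref{theorem:weights-finite-presentation}. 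This presents $\Def_{H(L)}$ by finitely many weighted-homogeneous polynomials, with the weights of generators those of $H^1(L)$ and the weights of relations those of $H^2(L)$; assembling the two factors gives the claimed finite presentation as a product with $\mathfrak{g}/\epsilon(H^0(L))$.

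The step I expect to be the main obstacle is the weight bookkeeping for the extra factor $\mathfrak{g}/\epsilon(H^0(L))$. Indeed $\mathfrak{g}$ sits in degree $0$ with weights that need \emph{not} be strictly positive — it typically contains weight-zero pieces coming from the fibre of $\ad(\rho)$ at $x$ — so a naive application of the weight argument to the whole of $H^1(C)$ would fail to bound the order of the surviving operations, and the presentation could appear infinite. What rescues the argument is precisely the identity $\ell'_r(y,\dots,y)=\ell_r(x,\dots,x)$ from the preceding theorem: the $\mathfrak{t}$-generators never enter any relation, so they form a free direct factor and finiteness is forced by the positivity of the weights on $H^1(L)$ alone, independently of the weights carried by $\mathfrak{g}/\epsilon(H^0(L))$.
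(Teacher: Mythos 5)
Your outline matches the paper's strategy---the product decomposition from the preceding theorem combined with the weight argument of Theorems~\ref{theorem:weight-grading} and~\ref{theorem:weights-finite-presentation}---and your closing observation, that the weights on $\mathfrak{g}/\epsilon(H^0(L))$ need not be positive but are harmless because the $\mathfrak{t}$-coordinates never enter the relations, is exactly the right insight. But there is a genuine gap at the welding step. A transferred $L_\infty$ structure depends on the chosen splitting: the operations $\ell_r$ on $H(L)$ appearing in the preceding theorem's identity $\ell'_r(y,\dots,y)=\ell_r(x,\dots,x)$ arise from an arbitrary decomposition $L^n=A^n\oplus K^n\oplus B^n$ of $L$ itself, whereas the weight-compatible operations of Theorem~\ref{theorem:weight-grading} live on a \emph{different} transferred structure, obtained after replacing $L$ by the Cirici--Horel model $\mathscr{M}=E_1^W(L)$ and splitting compatibly with its grading. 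The two structures are related only by a non-canonical $L_\infty$ isomorphism, which need not preserve the weight grading nor the product form, so you cannot simply assert that ``the'' $\ell_r$ from the product theorem respect the grading. Moreover, applying Cirici--Horel to $L$ alone, as you propose, loses the augmentation: without an augmented graded model there is no cone on which to rerun the product-decomposition argument, so the graded transferred structure and the product-form transferred structure live on objects with no given comparison.

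The fix---and this is precisely what the paper's proof of the corollary consists of---is to make one simultaneous choice: apply the (functorial) Cirici--Horel theorem to the morphism $\epsilon:L\rightarrow\mathfrak{g}$ of mixed Hodge diagrams, obtaining a graded augmented model $\tau:\mathscr{M}\rightarrow\mathfrak{h}$ in which $\mathfrak{h}$ splits the weight filtration of $\mathfrak{g}$; observe that the cone $\mathscr{C}$ of $\tau$ is quasi-isomorphic to $C$ and inherits a weight grading compatible with the cone operations $\mu_r$; then split $\mathscr{C}$ by combining a grading-compatible splitting of $\mathscr{M}$ with a grading-compatible complement of $\tau(H^0(\mathscr{M}))\subset\mathfrak{h}$. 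With this single choice, \emph{one} transferred structure on $H(C)$ simultaneously respects the weight grading and satisfies $\ell'_r(y,\dots,y)=\ell_r(x,\dots,x)$, after which your finiteness argument goes through verbatim. As written, your proposal cites the two theorems as black boxes and implicitly identifies their outputs; that identification is the entire content of the paper's (short) proof and must be constructed, not assumed.
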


\begin{proof}
First we apply, as in the proof of Theorem~\ref{theorem:weight-grading}, the theorem of Cirici-Horel to the map $\epsilon:L\rightarrow\mathfrak{g}$ over~$\kk$. This map is quasi-isomorphic to an augmented DG Lie algebra $\tau:\mathscr{M}\rightarrow\mathfrak{h}$ with an extra grading (on $\mathscr{M}$ and on $\mathfrak{h}$, respected by $\tau)$ splitting the weight filtration at the level of cohomology. Hence $\mathfrak{h}$ is just a splitting of the weight filtration of $\mathfrak{g}$. This also defines a weight grading of the cone $\mathscr{C}$ of $\tau$, which is quasi-isomorphic to $C$, compatible with the operations $\mu_r$ of the cone, and splitting the weight filtration on $H(C)$.

Then we combine the method of Theorem~\ref{theorem:weight-grading} with the previous theorem: we choose a splitting for $\mathscr{C}$ by combining a splitting for $\mathscr{M}$ and a splitting for $\tau(H^0(\mathscr{M}))\subset\mathfrak{h}$, both in a compatible way with the grading. The homotopy transfer of structure with an extra grading then gives us operations $\ell'_r$ on $H(C)$ respecting the weight grading, and at the level of $H^1(C)^{\otimes r}\rightarrow H^2(L)$ these coincide with the operations $\ell_r$ of Theorem~\ref{theorem:weights-finite-presentation}.
\end{proof}

\subsection{Consequences}
\label{subsection:consequences}

Finally, we come back to the study of the complete local ring $\Ohat_\rho$ of the representation variety of $\pi_1(X,x)$ at a representation $\rho$ for various cases for $X$ and $\rho$ where one can construct the augmented mixed Hodge diagram of Lie algebras $L$. The previous description of $\Def_{H(C)}=\Def_{L,\epsilon}$ gives us directly a description of $\Ohat_\rho$ with its weight grading at least in the cases where there are only a finite number of non-vanishing higher operations $\ell_r$ on $H(L)$.

Recall that when $\rho$ is the monodromy representation of a VHS $\mathbb{V}$ then $\ad_\rho\subset\End(\mathbb{V})$ is a VHS of weight zero, hence in all our cases $H^i(X,\ad_\rho)$ has weights greater or equal to $i$ (from a higher-level point of view: the functor $R^if_*$ increases the weights by $i$).

\begin{enumerate}
\item If $X$ is compact Kähler and $\rho$ is the monodromy of a polarized VHS over $X$, then $H^1(L)$ is pure of weight $1$ and $H^2(L)$ is pure of weight $2$. We already know, or we recover, that $L$ is formal and that there are no operations $\ell_r$ for $r\geq 3$ and $\ell_2$ is the Lie bracket induced on cohomology. We recover well again the result of Goldman-Millson: $\Ohat_\rho$ is given by the product of the equation $[x,x]=0$ in $H^1(L)$ (quadratic) with the vector space $\mathfrak{g}/\epsilon(H^0(L))$.

\item If $X$ is smooth algebraic and $\rho$ has finite image then we constructed in \cite[\S~11]{Lefevre1} the mixed Hodge diagram by using a finite cover $Y\rightarrow X$ over which $\rho$ is trivial and constructing the diagram equivariantly on $Y$. This also works in the quasi-Kähler case because a finite cover of a quasi-Kähler manifold is known to be quasi-Kähler (\cite[Proposition~A.5]{ArapuraNori}), with the equivariant compactification and equivariant resolution of singularities still available in this context. Then the weights of $L$ are directly related to the weights $Y$. Hence $H^1(L)$ has weights only $1,2$ and $H^2(L)$ has weights only $2,3,4$. The non-vanishing operations $\ell_r$ exist only for $r=2,3,4$: only $\ell_2(x,x)$ for $x$ of weight $1,2$ and $\ell_3(x,x,x)$, $\ell_4(x,x,x,x)$ for $x$ of weight $1$ produce weights allowed in $H^2(L)$. Furthermore, since $\rho$ has finite image, $\mathfrak{g}/\epsilon(H^0(L))$ vanishes. Thus we recover completely the result of Kapovich-Millson: $\Ohat_\rho$ has a presentation with generators of weight $1,2$ and relations of weight $2,3,4$.

\item In the above case, assume that $H^1(Y)$ is pure of weight $2$. Then $H^1(L)$ is also pure of weight $2$. And $H^2(L)$ is again limited to weights $2,3,4$. Thus the only possible non-zero operation is $\ell_2(x,x)$ for $x$ of weight $2$. So we recover the main result of \cite{Lefevre1}: in this case $\Ohat_\rho$ is quadratic. And we recover some form of \emph{purity implies formality} (see \cite{CiriciHorel}): the purity of weights implies some partial formality of $L$ hence it behaves as in the compact case.

\item If $X\subset\overline{X}$ (with $\overline{X}$ compact Kähler) and $\rho$ is the monodromy of a polarized VHS over $X$ extendable to $\overline{X}$, then $H^1(L)$ has again weights $1,2$ and $H^2(L)$ has weights $2,3,4$. Thus we recover the same result as with finite images except that in this case we also have a non-zero part $\mathfrak{g}/\epsilon(H^0(L))$.

\item If $X$ is a curve and $\rho$ comes from a polarizable VHS with unipotent monodromy, then the lowest possible weight of $H^1(L)$ is $1$ and the lowest possible weight of $H^2(L)$ is again $2$. The higher weights are not so easy to describe, depending on the behavior of the VHS near the singularities, and are not bounded without further hypothesis on the monodromy. This is however enough to conclude that there is a finite weighted-homogeneous presentation for $\Ohat_\rho$.

\item In the most general case of $X\subset \overline{X}$ (with $\overline{X}$ compact Kähler) and $\rho$ coming from a pure polarizable VHS over $X$ with unipontent monodromy then again we know that $H^1(L)$ has lowest possible weight~$1$ and we conclude as for curves, and again we cannot bound the weights in this generality.
\end{enumerate}

If $\mathbb{V}$ is mixed then a priori we cannot ensure that $H^1(L)$ has weights greater or equal to zero without further hypotheses, and we cannot prove this way that there are only finitely many equations, but our method still allows us to write down weighted-homogeneous equations for $\Ohat_\rho$.

\bibliographystyle{amsalpha}
\bibliography{Bibliographie}

\vspace{2cm}

\end{document}